\tikzstyle{v} = [circle, draw, inner sep=2pt, minimum size=3pt, fill=black]
\tikzstyle{l} = [rectangle, draw, rounded corners]
\theoremstyle{plain}% default
\newtheorem{theorem}{Theorem}[section]
\newtheorem{lemma}[theorem]{Lemma}
\newtheorem{proposition}[theorem]{Proposition}
\newtheorem{corollary}[theorem]{Corollary}
\theoremstyle{definition}
\newtheorem{definition}[theorem]{Definition}
\newtheorem{example}[theorem]{Example}
\newtheorem{remark}[theorem]{Remark}
\newtheorem{claim}[theorem]{Claim}
\newcommand{\Z}{\mathbb{Z}}
\newcommand{\R}{\mathbb{R}}
\newcommand{\K}{\mathbb{K}}
\DeclareMathOperator{\rank}{rank}
\DeclareMathOperator{\Der}{Der}
\DeclareMathOperator{\cl}{cl}
\newcommand{\A}{\mathcal{A}}
\newcommand{\F}{\mathcal{F}}
\newcommand{\mcH}{\mathcal{H}}
\newcommand{\mcK}{\mathcal{K}}
\newcommand{\mcP}{\mathcal{P}}
\begin{document}

\title{MAT-free graphic arrangements and a characterization of strongly chordal graphs by edge-labeling}
\author{
Tan Nhat Tran
\thanks{
Fakult\"at f\"ur Mathematik, Ruhr-Universit\"at Bochum, D-44780 Bochum, Germany.
E-mail address: tan.tran@ruhr-uni-bochum.de. 
}\and
Shuhei Tsujie
\thanks{Department of Mathematics, Hokkaido University of Education, Asahikawa, Hokkaido 070-8621, Japan. 
E-mail address: tsujie.shuhei@a.hokkyodai.ac.jp}
}

\date{}

\maketitle

\begin{abstract}
Ideal subarrangements of a Weyl arrangement are proved to be free by the multiple addition theorem (MAT) due to Abe-Barakat-Cuntz-Hoge-Terao (2016). 
They form a significant class among Weyl subarrangements that are known to be free so far.
The concept of MAT-free arrangements was introduced recently by Cuntz-M{\"u}cksch (2020) to capture a core of the MAT, which enlarges the ideal subarrangements from the perspective of freeness. 
The aim of this paper is to give a precise characterization of the MAT-freeness in the case of type $A$ Weyl subarrangements (or graphic arrangements).
It is known that  the ideal and free graphic arrangements correspond to the unit interval and chordal graphs respectively. 
We prove that a graphic arrangement is MAT-free if and only if the underlying graph is strongly chordal. 
In particular, it affirmatively answers a question of Cuntz-M{\"u}cksch that MAT-freeness is closed under taking localization in the case of graphic arrangements.
\end{abstract}

{\footnotesize \textit{Keywords}: 
Hyperplane arrangement, free arrangement, MAT-free arrangement, ideal subarrangement, graphic arrangement, strongly chordal graph, edge-labeling of graph
}

{\footnotesize \textit{2020 MSC}: 
52C35, %Arrangements of points, flats, hyperplanes [52Cxx Discrete geometry]
%32S22,  %Relations with arrangements of hyperplanes [32Sxx Singularities]
%05C15, %Coloring of graphs and hypergraphs [05Cxx Graph theory]
%05C22, %Signed and weighted graphs [05Cxx Graph theory]
%20F55,  %Reflection and Weyl groups [20Fxx Special aspects of infinite or finite groups]
13N15, %Derivations [13Nxx Differential algebra]
05C78 %Graph labelling
}

\tableofcontents

\section{Introduction}
\label{sec:intro}
\quad
A hyperplane arrangement is said to be  \emph{free} if its logarithmic derivation module is a free module \cite{T80, OT92} (cf. Definition \ref{def:free-arr}). 
An important class of  free arrangements is that of Weyl arrangements defined by the positive roots of irreducible root systems. 
There has been considerable interest  in finding and  characterizing  free subarrangements of a Weyl arrangement by combinatorial structures. 
In the case of type $A$, Weyl subarrangements are completely determined by \emph{graphic arrangements} whose freeness is fully characterized by \emph{chordal graphs} \cite{St72, ER94} (cf. Theorem \ref{thm:free-chordal}). 
Several certain cases of  type $B$  were studied in the connection with signed graphs \cite{ER94, STT19, TT20} yet no complete characterization is known. 
Arguably, the most fundamental and significant class of Weyl subarrangements known to be free is that of \emph{ideal subarrangements} derived by the  multiple addition theorem (MAT) due to Abe-Barakat-Cuntz-Hoge-Terao \cite{ABCHT16} (cf. Theorem \ref{thm:ideal-free}). 
In fact, the MAT applies in a more general setting; based on that Cuntz-M{\"u}cksch  \cite{CM20} introduced a new class of free arrangements, the so called \emph{MAT-free arrangements}  (cf. Definition \ref{def:MAT-partition-free}) which generalizes the ideal subarrangements. 
 
We are especially interested in characterizations of freeness-related properties of type $A$ Weyl subarrangements ($=$ graphic arrangements) in which considerable power and development of graph theory would be brought to bear.
It is known that the ideal graphic arrangements are parametrized by \emph{unit interval graphs} e.g., \cite{TT21}
(cf. Theorem  \ref{thm:ideal-UI}). 
Our main result is a complete characterization of MAT-free graphics arrangements by means of  \emph{strongly chordal graphs} (cf. Definition \ref{def:strongly-chordal}). 
We summarize the results in Table \ref{tab:concepts}. 
We find it interesting that the concept of MAT-freeness which was recently introduced is captured by the notion of strongly chordal graphs which appeared  much earlier in literature.
This can also be regarded as an analogue of the  classical theory of freeness and chordality. 
We will see in \S \ref{subsec:MAT-S-PEO} that many important concepts in the classical theory such as simplicial vertices and perfect elimination orderings of chordal graphs have their analogous MAT- versions. 
\begin{table}[htbp]
\centering
{\renewcommand\arraystretch{1.5} 
\begin{tabular}{c|c|c}
Graph class & Weyl subarrangement class & Location \\
\hline\hline
chordal  &  free   & \cite{St72}, {\cite[Theorem 3.3]{ER94}}\\
\hline
\textbf{strongly chordal} &   \textbf{MAT-free}  & \textbf{Theorem \ref{thm:MAT-free-strong-chordal}}\\
\hline
unit interval &  ideal & e.g., {\cite[Theorem 16]{TT21}}\\
\end{tabular}
}
\bigskip
\caption{Interplay between graphs and Weyl arrangements in type $A$. 
The third row shown in bold indicates the main result of the paper.}
\label{tab:concepts}
\end{table}

Our main result has a number of applications.
From the viewpoint of arrangement theory, it gives an affirmative answer to a question of Cuntz-M{\"u}cksch in the case of graphic arrangements  that MAT-freeness is closed under taking localization  (cf. Corollary \ref{cor:close-localization}). 
Thanks to the relation $\{\mbox{unit interval graphs}\} \subsetneq \{\mbox{strongly chordal graphs}\}$, it also gives a different and graphical proof that the ideal graphic arrangements are MAT-free (cf. \S\ref{sec:rem}(A)). 
From the viewpoint of graph theory, our main result contributes a new characterization of strongly chordal graphs via a special type of edge-labelings, which we shall call \emph{MAT-labelings} (cf. Definition \ref{definition MAT-labeling}). 

A key ingredient in our proof that strong chordality implies MAT-freeness (the harder part) is a characterization of strongly chordal graphs by their \emph{clique intersection posets} due to Nevries-Rosenke \cite{nevries2015characterizing-dam} (cf. Theorem \ref{Nevries-Rosenke}). 
Our strategy is to construct an MAT-labeling for a given strong chordal graph with building blocks complete induced subgraphs of the graph.
The clique intersection poset of a chordal graph consists of all intersections of maximal cliques of the graph which serves as essential machinery in the construction.
We believe that it is worth pursuing further the notion of clique intersection poset for MAT-freeness of larger class of arrangements (see \S\ref{sec:rem}(F) for more details).

The remainder of the paper is organized as follows. 
In \S \ref{subsec:free-arr}, we recall the definitions and basic facts of free arrangements and chordal graphs. 
In \S \ref{subsec:MATfree-arr}, we recall the definitions of MAT-free arrangements and strongly chordal graphs, and give the statement of our main result. 
In \S \ref{sec:more}, we recall some other useful characterizations of (strongly) chordal graphs.
In \S \ref{subsec:MAT-labeling of graphs}, we introduce the notion of MAT-labeling of graphs. 
In \S \ref{subsec:MAT-free implies strongly chordal}, we prove the ``only if" part of our main Theorem \ref{thm:MAT-free-strong-chordal} (MAT-freeness implies strong chordality).
In \S \ref{subsec:MAT-S-PEO}, we introduce the notions of MAT-simplicial vertices and MAT-perfect elimination orderings.
In \S \ref{sub:if-part}, we prove the ``if" part of our main Theorem \ref{thm:MAT-free-strong-chordal} (strong chordality implies MAT-freeness). 
The proof the main theorem is also included with an example in the end of this section. 
Finally, in \S\ref{sec:rem}, we address some further remarks and suggest some problems for future research.
%******************************************************************************** 

\section{Arrangements, graphs and statement of the main result}
\label{sec:main-result}
\subsection{Free arrangements}
\label{subsec:free-arr}
\quad
We first review some basic concepts and preliminary results on free arrangements. Our standard reference is \cite{OT92}.
Let $\Bbb K$ be a field, $\ell$ be a positive integer and $V = \Bbb K^\ell$ be the  $ \ell $-dimensional vector space over $ \mathbb{K} $. 
A \textbf{hyperplane} in $V$ is a  \textit{linear} subspace of codimension one of $V$.
An \textbf{arrangement} is a finite set of hyperplanes in $V$. 
Let $\A$ be an arrangement in $V$. 
Define the \textbf{intersection lattice} $ L(\mathcal{A}) $ (of flats) of $ \mathcal{A} $ by 
\begin{align*}
L(\mathcal{A}) \coloneqq \Set{\bigcap_{H \in \mathcal{B}}H  |  \mathcal{B} \subseteq \mathcal{A} },
\end{align*}
where the partial order is given by reverse inclusion $X\le Y\Leftrightarrow Y\subseteq X$ for $X, Y \in L(\A)$. 
We agree that $V $ is the unique minimal element in $ L(\mathcal{A}) $ as the intersection over the empty set. 
Thus $ L(\mathcal{A}) $ is a geometric lattice which can be equipped with rank function $ r(X) \coloneqq \operatorname{codim}(X) $ for $X \in L(\mathcal{A})$. 
We also define $\rank(\A)$ as the rank of the maximal element $\bigcap_{H \in \mathcal{A}}H$ of $L(\A)$.

The \textbf{characteristic polynomial} $ \chi_{\mathcal{A}}(t) \in \mathbb{Z}[t] $ of $ \mathcal{A} $ is defined by
\begin{align*}
\chi_{\mathcal{A}}(t) \coloneqq \sum_{X \in L(\mathcal{A})}\mu(X)t^{\dim X}, 
\end{align*}
where $ \mu $ denotes the \textbf{M\"{o}bius function} $ \mu \colon L(\mathcal{A}) \to \mathbb{Z} $ defined recursively by 
\begin{align*}
\mu\left( V\right)= 1 
\quad \text{ and } \quad 
\mu(X) = -\sum_{\substack{Y \in L(\mathcal{A}) \\ X \subsetneq Y}}\mu(Y). 
\end{align*}

Let $ \{x_{1}, \dots, x_{\ell}\} $ be a basis for the dual space $V^{\ast} $ and let $ \textbf{S}=\mathbb{K}[x_{1}, \dots, x_{\ell}] $. 
The \textbf{defining polynomial} $Q(\A)$ of $\A$ is given by
$$Q=Q(\A)\coloneqq \prod_{H \in \A} \alpha_H \in  \textbf{S},$$
where $ \alpha_H=a_1x_1+\cdots+a_\ell x_\ell$  $(a_i\in \mathbb{K})$ satisfies $H = \ker \alpha_H$. 
The \textbf{module $D(\mathcal{A}) $ of logarithmic derivations}  is defined by 
$$D(\A)\coloneqq  \{ \theta\in \Der(\textbf{S}) \mid \theta(Q) \in Q \textbf{S}\},$$
where $ \Der( \textbf{S}) = \{ \varphi: \textbf{S}\to  \textbf{S} \mid \mbox{$\varphi$ is $ \Bbb K$-linear,
$\varphi(fg)=f\varphi(g)+g\varphi(f)$ for any $f,g \in  \textbf{S}$}\}$ is the set of all derivations of $ \textbf{S} $ over $\mathbb{K}$. 
Note that $\Der( \textbf{S})$ is a free $ \textbf{S}$-module with basis $\{\partial/\partial x_1,\ldots , \partial/\partial x_\ell\}$ consisting of the usual partial derivatives. 
A non-zero element $\varphi= f_1\cdot\partial/\partial x_1+\cdots+f_\ell\cdot\partial/\partial x_\ell\in  \Der( \textbf{S})$ is \textbf{homogeneous of degree $b$} written  $\deg\varphi=b$ if each non-zero polynomial $f_i\in  \textbf{S}$ for $1 \le i \le \ell$ is homogeneous of degree $b$. 

\begin{definition}[Free arrangements and their exponents \cite{T80, OT92}]
\label{def:free-arr}
An arrangement $ \mathcal{A} $ is called \textbf{free} with the multiset $ \exp(\mathcal{A}) = \{d_{1}, \dots, d_{\ell}\} $ of \textbf{exponents} if $D(\A)$ is a free $S$-module with a homogeneous basis $ \{\theta_{1}, \dots, \theta_{\ell}\}$ such that $ \deg \theta_{i} = d_{i} $ for each $ i $.  
\end{definition}
Remarkably, when an arrangement is free, the exponents turn out to be the roots of the characteristic polynomial due to  Terao. 
\begin{theorem}[Factorization theorem \cite{T81}, {\cite[Theorem 4.137]{OT92}}]\label{thm:Factorization}
If $\A$ is free with $\exp(\A) =\{d_{1}, \dots, d_{\ell}\} $, then 
$$\chi_\A(t)= \prod_{i=1}^\ell (t-d_i).$$
\end{theorem}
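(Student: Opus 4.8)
The plan is to prove Terao's factorization theorem through the Solomon--Terao formula, which expresses the characteristic polynomial of any central arrangement in terms of the Poincaré (Hilbert) series of the modules of logarithmic $p$-derivations, and then to exploit the strong structure that freeness imposes on these modules. For a graded $S$-module $M$ write $\operatorname{Poin}(M,x) = \sum_{k} \dim_{\K} M_k\, x^{k}$ for its Poincaré series, and for $0 \le p \le \ell$ let $D^{(p)}(\A)$ denote the module of logarithmic $p$-derivations of $\A$, so that $D^{(0)}(\A) = S$ and $D^{(1)}(\A) = D(\A)$. The Solomon--Terao formula states that
$$\chi_{\A}(t) = (-1)^{\ell} \lim_{x \to 1} \Phi(\A, x, t), \qquad \Phi(\A, x, t) \coloneqq \sum_{p=0}^{\ell} \operatorname{Poin}\!\big(D^{(p)}(\A), x\big)\,\big(t(x-1) - 1\big)^{p},$$
the point being that, although each summand has a pole of order $\ell$ at $x = 1$, the total expression is a polynomial whose limit recovers $\chi_{\A}(t)$. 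I would take this formula as the main external input.

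Next I would bring in the structural consequence of freeness: when $\A$ is free with $\exp(\A) = \{d_1, \dots, d_\ell\}$, the exterior powers of $D(\A)$ compute the higher derivation modules, i.e. $D^{(p)}(\A) \cong \bigwedge^{p}_{S} D(\A)$. Hence $D^{(p)}(\A)$ is itself free with a homogeneous basis $\{\theta_{i_1} \wedge \cdots \wedge \theta_{i_p} : i_1 < \cdots < i_p\}$ indexed by $p$-subsets, where $\deg(\theta_{i_1} \wedge \cdots \wedge \theta_{i_p}) = d_{i_1} + \cdots + d_{i_p}$. Since $\operatorname{Poin}(S, x) = (1-x)^{-\ell}$, this gives the closed form
$$\operatorname{Poin}\!\big(D^{(p)}(\A), x\big) = \frac{e_p\big(x^{d_1}, \dots, x^{d_\ell}\big)}{(1-x)^{\ell}},$$
where $e_p$ is the $p$-th elementary symmetric polynomial.

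The computation then collapses cleanly. Substituting into $\Phi$ and using $\sum_{p=0}^{\ell} e_p(y_1, \dots, y_\ell)\, s^{p} = \prod_{i=1}^{\ell}(1 + y_i s)$ with $s = t(x-1) - 1$ and $y_i = x^{d_i}$ yields
$$\Phi(\A, x, t) = \frac{1}{(1-x)^{\ell}} \prod_{i=1}^{\ell}\Big(1 + x^{d_i}\big(t(x-1) - 1\big)\Big) = \prod_{i=1}^{\ell}\Big(\big(1 + x + \cdots + x^{d_i - 1}\big) - t\,x^{d_i}\Big),$$
where in the last step I factor $1 - x^{d_i} = (1-x)(1 + x + \cdots + x^{d_i - 1})$ out of each factor to cancel the denominator. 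Letting $x \to 1$, each factor tends to $d_i - t$, so $\lim_{x \to 1}\Phi(\A, x, t) = \prod_{i=1}^{\ell}(d_i - t) = (-1)^{\ell}\prod_{i=1}^{\ell}(t - d_i)$, and multiplying by $(-1)^{\ell}$ gives $\chi_{\A}(t) = \prod_{i=1}^{\ell}(t - d_i)$, as claimed.

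The main obstacle is not this final manipulation but the two facts imported along the way. Establishing the Solomon--Terao formula in full generality is the deep step: it requires the pole cancellation at $x = 1$, which is handled via local cohomology (or the original Solomon--Terao argument comparing the complex of logarithmic forms against the Koszul/de Rham complex). The identity $D^{(p)}(\A) \cong \bigwedge^{p} D(\A)$ in the free case is the second key input and must be proved separately. I would also flag that the tempting shortcut via Terao's Addition--Deletion theorem and induction on $|\A|$ does \emph{not} suffice here, since it would only cover inductively free arrangements, whereas the factorization theorem holds for all free arrangements; this is precisely why the homological Solomon--Terao route is needed.
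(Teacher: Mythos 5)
The paper does not prove this theorem itself---it imports it from \cite{T81} and \cite[Theorem~4.137]{OT92}---and your argument via the Solomon--Terao formula together with the identification $D^{(p)}(\A)\cong\bigwedge^p D(\A)$ for free arrangements is precisely the proof given in that cited source; the final computation (factoring $1-x$ out of each term $1+x^{d_i}(t(x-1)-1)$ and letting $x\to 1$) is carried out correctly. Your closing remark that addition--deletion induction would only reach inductively free arrangements is also an accurate and worthwhile caveat.
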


In general it is very hard to characterize the freeness of an arrangement by combinatorial data. 
It is only possible in some very special class of arrangements, for example, graphic arrangements which we shall recall shortly. 
Let $G$ be a simple graph (i.e., no loops and no multiple edges) with vertex set $V_G = \{v_{1}, \dots, v_{\ell}\}$ and edge set $E_G$.
The  \textbf{graphic arrangement} $\A_G$ in $\Bbb K^\ell$ is defined by
$$\A_G:= \{x_i - x_j=0 \mid \{v_i,v_j\} \in E_G \}.$$
A simple graph is  \textbf{chordal} if it does not  contain an induced cycle of length greater than three, or $C_{n}$-free for all $n >3$ in shorthand notation.
The freeness of graphic arrangements is completely characterized by chordality.
\begin{theorem}[Freeness and chordality \cite{St72}, {\cite[Theorem 3.3]{ER94}}]
\label{thm:free-chordal}
Let $G$ be a simple graph. 
The graphic arrangement $\A_G$ is free if and only if $G$ is chordal.
\end{theorem}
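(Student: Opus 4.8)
The plan is to prove both implications using the interplay between the characteristic polynomial of $\A_G$ and the chromatic polynomial $P_G(t)$ of the graph, together with Terao's Addition--Deletion Theorem. The starting observation is the classical identity $\chi_{\A_G}(t) = P_G(t)$, which follows from the isomorphism between $L(\A_G)$ and the bond lattice of $G$; this lets me translate the Factorization Theorem (Theorem \ref{thm:Factorization}) into a statement about the roots of $P_G(t)$. Two combinatorial dictionaries will be used repeatedly: restricting $\A_G$ to a hyperplane $x_i - x_j = 0$ (with $\{v_i, v_j\} \in E_G$) yields the graphic arrangement of the graph obtained by identifying $v_i$ and $v_j$, while localizing at the flat attached to a connected partition of $V_G$ yields the disjoint union of the graphic arrangements of the induced subgraphs on the blocks.

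For the ``if'' direction (chordal implies free) I would induct on the number of vertices. A chordal graph possesses a simplicial vertex $v$, and $G - v$ is again chordal, so $\A_{G-v}$ is free by induction. Let $u_1, \dots, u_k$ be the neighbors of $v$; since $v$ is simplicial they form a clique. Add the edges $\{v, u_1\}, \dots, \{v, u_k\}$ one at a time, producing a chain $\A_{G-v} = \A_0 \subset \A_1 \subset \cdots \subset \A_k = \A_G$. At each step the restriction $\A_j^H$ to $H : x_v - x_{u_j} = 0$ identifies $v$ with $u_j$; because $u_1, \dots, u_k$ already form a clique, this identification creates no new edge, so the restriction is the graphic arrangement of a smaller chordal graph and is free by induction. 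The exponents are tracked using the greedy-coloring factorization $P_G(t) = \prod_i (t - d_i)$, where $d_i$ is the back-degree of $v_i$ in a perfect elimination ordering; in particular the new exponent contributed by $v$ is $k$. Terao's Addition Theorem then upgrades freeness of the deletion $\A_{j-1}$ and the restriction $\A_j^H$ to freeness of $\A_j$, and after $k$ steps $\A_G$ is free.

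For the ``only if'' direction I argue by contraposition. If $G$ is not chordal it contains an induced cycle $C_n$ with $n \ge 4$ on a vertex set $W$. The connected partition $\{W\} \cup \{\{v\} : v \notin W\}$ determines a flat $X$ with $\A_X = \A_{C_n}$, and since localizations of free arrangements are free, it suffices to show $\A_{C_n}$ is not free for $n \ge 4$. Here I invoke the Factorization Theorem in the contrapositive: $\chi_{\A_{C_n}}(t) = P_{C_n}(t) = (t-1)\bigl[(t-1)^{n-1} + (-1)^n\bigr]$, whose non-trivial factor has roots $t = 1 + \zeta$ with $\zeta^{\,n-1} = (-1)^{n+1}$. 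For $n \ge 4$ these include non-real values, so $P_{C_n}(t)$ does not split into integer linear factors; by Theorem \ref{thm:Factorization} this obstructs freeness of $\A_{C_n}$, hence of $\A_G$.

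The main obstacle is the bookkeeping in the ``if'' direction: one must verify simultaneously that each restriction really is the claimed smaller chordal graphic arrangement (the clique condition on the neighborhood of $v$ is exactly what guarantees that no edges collide under identification) and that the exponents match the hypotheses of the Addition Theorem at every step. Both hinge on the perfect-elimination structure of chordal graphs, so organizing the induction around a simplicial vertex---rather than around an arbitrary edge---is what makes the argument go through.
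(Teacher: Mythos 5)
The paper does not prove Theorem \ref{thm:free-chordal} itself; it is quoted from Stanley and Edelman--Reiner, and your argument is essentially the classical proof from those sources. Both halves check out: the inner induction adding the edges from a simplicial vertex $v$ to its clique neighborhood one at a time is correct (each restriction collapses onto $\A_{G\setminus v}$ precisely because the neighborhood is a clique, and the exponent attached to $v$ climbs $0,1,\dots,k$, which is exactly the matching the Addition Theorem requires), and the converse via localization to an induced $C_n$ together with the non-real roots of $(t-1)^{n-1}+(-1)^n$ for $n\ge 4$ and Theorem \ref{thm:Factorization} is the standard obstruction.
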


\subsection{MAT-free arrangements and the main result}
\label{subsec:MATfree-arr}
\quad
Now we recall the concept of MAT-free arrangements following \cite{CM20}.
For $X \in L(\A)$, we define the \textbf{localization} of $\A$ on $X$  by 
$${\A}_X := \{ K \in {\A} \mid X \subseteq K\},$$
and define the \textbf{restriction} ${\A}^{X}$ of ${\A}$ to $X$ by 
$${\A}^{X}:= \{ K \cap X \mid K \in{\A }\setminus {\A}_X\}.$$

For a positive integer $n$, denote $[n]:=\{1,2,\ldots,n\}$. 
\begin{definition}[MAT-partition and MAT-free arrangements {\cite[Lemma 19 and Definition 20]{CM20}}]
\label{def:MAT-partition-free}
Let $\A$ be a nonempty arrangement. 
A partition (disjoint union of nonempty subsets) $\pi= (\pi_{1}, \dots, \pi_{n}) $ of $\A$ is called an \textbf{MAT-partition} if  the following three conditions hold for every $ k \in [n] $. 
\begin{enumerate}[(MP1)]
\item\label{definition MAT partition 1} $ \rank(\pi_{k}) = |\pi_{k}| $.
\item\label{definition MAT partition 2} There does not exist $ H^{\prime} \in \mathcal{A}_{k-1} $ such that $ \bigcap_{H \in \pi_{k}}H \subseteq H^{\prime} $, where $ \mathcal{A}_{k-1} \coloneqq \pi_{1} \sqcup \dots \sqcup \pi_{k-1} $ (disjoint union) and $ \A_{0} \coloneqq \varnothing_\ell $ is the $\ell$-empty arrangement. 
\item\label{definition MAT partition 3} For each $ H \in \pi_{k} $, $ |\mathcal{A}_{k-1}| - |(\mathcal{A}_{k-1} \cup \{H\})^{H}| = k-1 $. 
\end{enumerate}
An arrangement  is called \textbf{MAT-free} if it is empty or  admits an MAT-partition. 
\end{definition}

All irreducible complex reflection arrangements that are MAT-free were characterized in  \cite{CM20}.
The name ``MAT-free arrangement" is made by inspiration of the multiple addition theorem due to Abe-Barakat-Cuntz-Hoge-Terao. 
\begin{theorem}[Multiple addition theorem (MAT) {\cite[Theorem 3.1]{ABCHT16}}]
\label{thm:MAT}
Let $\A'$ be a free arrangement with  $\exp(\A') =\{d_1, \ldots, d_\ell\}_{\le}$ (i.e., $d_{1}\le \dots\le d_{\ell}$), and $p \in [\ell]$ the number of maximal exponents.
 Let $H_1,\ldots,H_q\notin \A'$ be hyperplanes. Define
  $
\A''_j:=(\A'\cup\{H_j\})^{H_j}.
$
Assume that the following three conditions are satisfied:
\begin{enumerate}[(1)]
\item $X:=H_1 \cap \cdots \cap H_q$ is $q$-codimensional.
\item  $X \nsubseteq \bigcup_{H \in \A'}H$. 
\item $|\A'|- |\A_j''|=d$ $(j \in [q])$.
\end{enumerate}
Then $q \le p$ and $\A:=\A'\cup\{H_1,\ldots,H_q\}$ is free with $\exp(\A)=\{d_1, \ldots, d_{\ell-p}, d^{p-q}, (d+1)^q\}$.
Here $d^e$ means $d$ appears $e\ge0$ times  in the multiset of exponents.
\end{theorem}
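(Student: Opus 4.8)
The plan is to add the hyperplanes $H_1,\dots,H_q$ to $\A'$ one at a time and to fire Terao's Addition Theorem at each stage, reading off the new exponent from the combinatorial hypotheses (1)--(3). Write $\A_0:=\A'$ and $\A_k:=\A'\cup\{H_1,\dots,H_k\}$ for $k\in[q]$, so that $\A_q=\A$, and note that for the conclusion to be consistent the common value $d$ in (3) must be the maximal exponent $d_\ell$ (the leftover copies $d^{\,p-q}$ sit at the top, while $d+1$ becomes the new maximum). I would prove by induction on $k$ that $\A_k$ is free with $\exp(\A_k)=\{d_1,\dots,d_{\ell-p},d^{\,p-k},(d+1)^{k}\}$; the theorem is the case $k=q$, and the bound $q\le p$ will fall out of the induction.

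First I would carry out the exponent bookkeeping. Terao's Addition Theorem applied to the triple $(\A_k,\A_{k-1},\A_k^{H_k})$ asserts that if $\A_{k-1}$ is free and the restriction $\A_k^{H_k}$ is free with $\exp(\A_k^{H_k})=\exp(\A_k)\setminus\{b\}$ for the top value $b$, then $\A_k$ is free; and by the Factorization Theorem (Theorem \ref{thm:Factorization}) one has $b=(|\A_{k-1}|-|\A_k^{H_k}|)+1$. So everything turns on computing $|\A_{k-1}|-|\A_k^{H_k}|$. Here $|\A_{k-1}|=|\A'|+(k-1)$, while as a set $\A_k^{H_k}$ differs from $\A''_k=(\A'\cup\{H_k\})^{H_k}$ exactly by the flats $H_1\cap H_k,\dots,H_{k-1}\cap H_k$. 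Conditions (1) and (2) are precisely what guarantee that these $k-1$ flats are pairwise distinct and do not already occur among the restrictions of hyperplanes of $\A'$: the codimension $q$ of $X$ forces $H_1,\dots,H_q$ to be independent (so the $H_i\cap H_k$ are distinct codimension-two flats), and $X\nsubseteq\bigcup_{H\in\A'}H$ rules out $H_i\cap H_k\subseteq K$ for $K\in\A'$, since that would force $X\subseteq K$. Hence $|\A_k^{H_k}|=|\A''_k|+(k-1)$, and condition (3) gives $|\A_{k-1}|-|\A_k^{H_k}|=|\A'|-|\A''_k|=d$, so $b=d+1$. Because the promoted value must be an exponent-plus-one of $\A_{k-1}$, it is a maximal exponent $d$ that is lifted to $d+1$, and the induction hypothesis shows $d$ still has multiplicity $p-(k-1)\ge 1$ exactly when $k\le p$, which yields $q\le p$.

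The main obstacle is the hypothesis that lets the Addition Theorem fire, namely that the restriction $\A_k^{H_k}$ is itself free with the matching exponents $\{d_1,\dots,d_{\ell-p},d^{\,p-k},(d+1)^{k-1}\}$. This is the genuinely hard input: freeness of a restriction is not automatic, it cannot be read off from $\A_{k-1}$ alone, and the hypotheses of the theorem do not even assume $\A''_j$ free, so a naive parallel induction on $\ell$ inside $H_k$ does not get off the ground.

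To circumvent this I would replace the restriction analysis by a direct application of Saito's criterion, constructing a basis of $D(\A)$ in one stroke. Starting from a homogeneous basis $\{\eta_1,\dots,\eta_\ell\}$ of $D(\A')$ with $\deg\eta_i=d_i$, I would retain the $\ell-p$ low-degree derivations and focus on the $p$-dimensional span of the top derivations $\eta_{\ell-p+1},\dots,\eta_\ell$ of degree $d$. Evaluating the values $\eta_i(\alpha_{H_j})$ along a generic point of $X$ produces a $p\times q$ coefficient matrix, and the decisive computation is that conditions (1)--(3) together with $X\nsubseteq\bigcup_{H\in\A'}H$ force this matrix to have full rank $q$; this simultaneously forces $q\le p$, isolates a $(p-q)$-dimensional family of degree-$d$ derivations surviving into $D(\A)$, and lets me build $q$ new derivations of degree $d+1$ lying in $D(\A)$ by clearing the obstructions along the $H_j$. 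A determinant computation via Saito's criterion, using $Q(\A)=Q(\A')\prod_{j}\alpha_{H_j}$ and the degree balance $\sum_i d_i+q=|\A|$, then certifies that these $\ell$ derivations form a basis with the claimed exponents, closing the induction.
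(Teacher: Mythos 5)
First, a point of reference: the paper does not prove this statement at all — it is imported verbatim from \cite[Theorem 3.1]{ABCHT16} — so the only meaningful comparison is with the proof in that source. Your diagnosis in the third paragraph is exactly right: the one-hyperplane-at-a-time induction via Terao's addition theorem cannot be made to work because freeness of the restrictions $\A_k^{H_k}$ is not among the hypotheses, and your pivot to a direct Saito-criterion construction is the strategy actually used in \cite{ABCHT16}. The counting $|\A_{k-1}|-|\A_k^{H_k}|=|\A'|-|\A''_k|$ via conditions (1) and (2) is also correct.

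The gap is that the two decisive claims of your final paragraph are asserted rather than proved, and both hinge on a lemma you never invoke: by \cite[Lemma 4.39 and Proposition 4.41]{OT92} (quoted by this paper in the footnote to the proof of Proposition \ref{no internal empty block}), for each $j$ there is a homogeneous polynomial $B_j$ with $\deg B_j=|\A'|-|\A''_j|=d$ such that $\theta(\alpha_{H_j})\in(\alpha_{H_j},B_j)$ for every $\theta\in D(\A')$. This is where hypothesis (3) actually does its work. By degree reasons it forces every basis derivation $\eta_i$ with $\deg\eta_i<d$ to satisfy $\eta_i(\alpha_{H_j})\in(\alpha_{H_j})$, hence to send a point $x\in X\setminus\bigcup_{H\in\A'}H$ (which exists by (2) over an infinite field) into $X$; since $\eta_1(x),\dots,\eta_\ell(x)$ is a basis of $V$ by Saito's criterion at $x$, and $\alpha_{H_1},\dots,\alpha_{H_q}$ are independent functionals vanishing on $X$ by (1), the first $\ell-p$ rows of the matrix $(\alpha_{H_j}(\eta_i(x)))_{i,j}$ vanish and the last $p$ rows must carry the full rank $q$. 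That is the full-rank claim and $q\le p$; without the vanishing of the low-degree rows, ``generic point of $X$'' gives nothing. The same lemma is what lets you normalize the top-degree derivations into $\varphi_1,\dots,\varphi_q$ with $\varphi_j(\alpha_{H_i})\equiv\delta_{ij}B_i \bmod \alpha_{H_i}$, so that $\alpha_{H_j}\varphi_j\in D(\A)$ are your degree-$(d+1)$ elements; ``clearing the obstructions'' has no content otherwise. Finally, be careful with the opening remark that $d$ ``must be'' $d_\ell$: this cannot be extracted from the conclusion you are trying to prove, and it is not implied by (1)--(3). For instance $\A'=\{x,y,z,x-y\}$ in $\K^3$ is free with exponents $\{1,1,2\}$, and $H_1=\ker(x-z)$ satisfies (1)--(3) with $d=1<2=d_\ell$, yet $\A'\cup\{H_1\}$ has exponents $\{1,2,2\}$, not $\{1,1,2\}$. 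So $d=d_\ell$ has to be read as part of hypothesis (3), as in the original statement in \cite{ABCHT16}.
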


The addition of  $\{H_1,\ldots,H_q\}$ to the arrangement $\A'$ resulting in $\A=\A'\cup\{H_1,\ldots,H_q\}$ in Theorem \ref{thm:MAT} is sometimes called an \textbf{MAT-step} \cite[Definition 2.11]{MR21}.
Thus any MAT-free arrangement is a free arrangement which can be constructed inductively from the empty arrangement by MAT-steps. 
We save the information for later use (see also \cite[Remark 2.15]{MR21}).
\begin{corollary}
\label{cor:MAT}
Suppose that $ \mathcal{A} $ is MAT-free with MAT-partition $ \pi=(\pi_{1}, \dots, \pi_{n}) $. 
Then the following statements hold. 
\begin{enumerate}[(1)]
\item For each  $ k \in [n] $, $\A_k$ is MAT-free with MAT-partition $(\pi_{1}, \dots, \pi_{k}) $.
\item $\A$ is free with $ \exp(\mathcal{A}) = \{d_{1}, \dots, d_{\ell}\}_{\leq} $ given by the block sizes of the dual partition of  $ \pi$, that is, $ d_{i} =|\{k \mid |\pi_{k}| \geq \ell-i+1\} |$ for all $i \in [n].$
\end{enumerate}
\end{corollary}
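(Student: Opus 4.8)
The plan is to derive both parts directly from the multiple addition theorem (Theorem \ref{thm:MAT}), reading conditions (MP1)--(MP3) as a recipe for a sequence of MAT-steps.

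For part (1) the point is that conditions (MP1)--(MP3) are \emph{local in the index}: for each $j$ they refer only to the block $\pi_j$, to the arrangement $\A_{j-1}=\pi_1\sqcup\dots\sqcup\pi_{j-1}$, to the flat $\bigcap_{H\in\pi_j}H$, and to the restrictions $(\A_{j-1}\cup\{H\})^{H}$ for $H\in\pi_j$. None of these data change if we truncate $\pi$ to its first $k$ blocks, since the arrangements $\A_0,\dots,\A_{k-1}$ are literally the same objects whether computed inside $\A$ or inside $\A_k=\pi_1\sqcup\dots\sqcup\pi_k$. Hence the three conditions, known to hold for every $j\in[n]$, in particular hold for every $j\in[k]$, so $(\pi_1,\dots,\pi_k)$ is an MAT-partition of the nonempty arrangement $\A_k$ and $\A_k$ is MAT-free. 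This is essentially a one-line observation and I would present it as such.

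For part (2) I would induct on $k$, proving simultaneously that $\A_k$ is free, that its exponents are the truncated dual partition $\{0^{\ell-|\pi_1|},1^{|\pi_1|-|\pi_2|},\dots,(k-1)^{|\pi_{k-1}|-|\pi_k|},k^{|\pi_k|}\}$, and that in particular its maximal exponent equals $k$ with multiplicity $|\pi_k|$. The base case is $\A_0=\varnothing_\ell$, free with $\exp(\A_0)=\{0^\ell\}$. For the step I pass from $\A_{k-1}$ to $\A_k=\A_{k-1}\cup\pi_k$ by a single MAT-step, applying Theorem \ref{thm:MAT} with $\A'=\A_{k-1}$, with $\{H_1,\dots,H_q\}=\pi_k$ (so $q=|\pi_k|$, and $H_1,\dots,H_q\notin\A_{k-1}$ because $\pi$ is a disjoint partition), and with $d=k-1$. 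The three hypotheses of the MAT match (MP1)--(MP3): (MP1) says $X:=\bigcap_{H\in\pi_k}H$ has codimension $|\pi_k|=q$, which is MAT hypothesis (1); (MP3) with $d=k-1$ is verbatim MAT hypothesis (3); and (MP2), that no hyperplane of $\A_{k-1}$ contains $X$, is equivalent to MAT hypothesis (2) that $X\nsubseteq\bigcup_{H\in\A_{k-1}}H$, because a linear subspace lying in a finite union of hyperplanes must lie in one of them. By the inductive hypothesis the maximal exponent of $\A_{k-1}$ is exactly $d=k-1$, with multiplicity $p=|\pi_{k-1}|$ (and $p=\ell$ when $k=1$); thus the MAT applies and yields freeness of $\A_k$, the inequality $q\le p$, and the updated exponents $\{d_1,\dots,d_{\ell-p},(k-1)^{p-q},k^{q}\}$, which is precisely the asserted truncated dual partition at level $k$.

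It then remains to identify, for $k=n$, the accumulated multiset with $\{d_1,\dots,d_\ell\}$ where $d_i=|\{k:|\pi_k|\ge \ell-i+1\}|$; this is a direct count, using that the inequalities $q\le p$ produced at each step make the block sizes weakly decreasing, so that the multiplicities recorded above are exactly the block sizes of the dual partition of $\pi$. The argument is routine once the MAT is granted, and the content lies entirely in two bookkeeping points: the equivalence of (MP2) with MAT hypothesis (2), and the verification that at every step the value $d=k-1$ is the \emph{current} maximal exponent, so that the MAT modifies the correct block of exponents. I regard this last consistency check — which forces the induction hypothesis to be phrased so as to track the maximal exponent and its multiplicity — as the main, though still elementary, obstacle.
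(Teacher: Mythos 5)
Your argument is correct and is exactly the route the paper intends: the corollary is stated without a written proof, as an immediate consequence of iterating Theorem \ref{thm:MAT} one block $\pi_k$ at a time (the ``MAT-step'' remark preceding it), which is precisely your induction, including the key bookkeeping that $q\le p$ keeps the block sizes weakly decreasing and that $d=k-1$ is the current maximal exponent at each step. The only point worth a parenthetical is that your identification of (MP2) with MAT hypothesis (2) via ``a subspace in a finite union of hyperplanes lies in one of them'' uses that the ground field is infinite, which is the setting of \cite{CM20} and of all applications in this paper.
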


A remarkable application of the MAT is an affirmative answer for a conjecture of Sommers-Tymoczko \cite{ST06} on the freeness of ideal subarrangements of Weyl arrangements. 

Let us recall it. 
Let $\K=\R$ and $V = \Bbb R^\ell$ with the standard inner product $(\cdot,\cdot)$.
 Let $\Phi$ be an irreducible (crystallographic) root system in $V$, with a fixed positive system $\Phi^+ \subseteq \Phi$ and the associated set of simple roots $\Delta := \{\alpha_1,\ldots,\alpha_\ell \}$. 
 For  $\alpha \in  \Phi$, define $H_{\alpha} :=\{x\in V \mid(\alpha,x)=0\}.$ 
 For $\Psi\subseteq\Phi^+$, the \textbf{Weyl subarrangement} $\A_{\Psi}$ is  defined by $\A_{\Psi}:= \{H_{\alpha} \mid \alpha\in\Psi\}$. 
In particular, $\A_{\Phi^+}$ is called the \textbf{Weyl arrangement}. 

Define the partial order $\ge$ on $\Phi^+$ as follows: $\beta_1 \ge \beta_2$ if $\beta_1-\beta_2 \in\sum_{i=1}^\ell \Z_{\ge 0}\alpha_i$. 
A subset $I\subseteq\Phi^+$ is an \textbf{ideal} of $\Phi^+$  if for $\beta_1,\beta_2 \in \Phi^+$, $\beta_1 \ge \beta_2, \beta_ 1 \in I$ implies $\beta_2 \in I$. 
For an ideal $I\subseteq\Phi^+$, the corresponding Weyl subarrangement $\A_{I}$ is called the \textbf{ideal subarrangement}.

\begin{theorem}[Ideal MAT-free theorem {\cite[Theorem 1.1]{ABCHT16}}]
\label{thm:ideal-free}
Any ideal subarrangement $\A_{I}$ is MAT-free, hence free.
\end{theorem}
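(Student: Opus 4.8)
The plan is to exhibit an explicit MAT-partition of $\A_I$ indexed by the height of roots and then to verify the three defining conditions (MP1)--(MP3) of Definition \ref{def:MAT-partition-free}; freeness is then automatic, since every MAT-free arrangement is free. Recall the height function $\mathrm{ht}$ on $\Phi^+$, where $\mathrm{ht}\left(\sum_i c_i\alpha_i\right)=\sum_i c_i$, and let $m$ be the maximal height occurring in $I$. For $k\in[m]$ I would set $\pi_k:=\{H_\alpha\mid \alpha\in I,\ \mathrm{ht}(\alpha)=k\}$ and $\A_{k-1}:=\pi_1\sqcup\cdots\sqcup\pi_{k-1}=\{H_\alpha\mid \alpha\in I,\ \mathrm{ht}(\alpha)<k\}$. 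Because the root poset is graded by height and $I$ is downward closed, any height-$m$ root of $I$ can be reduced to a simple root through a chain lying in $I$, so every level $\pi_k$ is nonempty and $\pi=(\pi_1,\dots,\pi_m)$ is a genuine partition of $\A_I$. Equivalently, one adds the height-$k$ roots all at once as an MAT-step, so that (MP1)--(MP3) are exactly hypotheses (1)--(3) of Theorem \ref{thm:MAT} applied with $\A'=\A_{k-1}$ and $\{H_1,\dots,H_q\}=\pi_k$.

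First I would dispatch (MP1). Since the roots of a fixed height in $\Phi^+$ are linearly independent -- a classical fact about root systems, and an elementary computation in type $A$, where the height-$k$ roots are $e_i-e_{i+k}$ -- the subset of them lying in $I$ is independent as well, giving $\rank(\pi_k)=|\pi_k|$. Next, (MP2) asks that $X_k:=\bigcap_{\alpha\in\pi_k}H_\alpha$ lie below no earlier hyperplane $H_\beta$ with $\beta\in I$ and $\mathrm{ht}(\beta)<k$; equivalently, that no such $\beta$ lie in the linear span of the height-$k$ roots of $I$. Here I would combine the independence above with the downward-closure of $I$ and the height grading furnished by the functional $\rho^{\vee}$ satisfying $\langle\alpha,\rho^{\vee}\rangle=\mathrm{ht}(\alpha)$, which obstructs a lower root from being expressed through a single higher level.

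The crux, and the step I expect to be the main obstacle, is (MP3): for each $\alpha\in\pi_k$ one must show $|\A_{k-1}|-|(\A_{k-1}\cup\{H_\alpha\})^{H_\alpha}|=k-1$, a constant independent of $\alpha$. Geometrically the left-hand side counts the coincidences among the restrictions $H_\beta\cap H_\alpha$ for $\beta\in\A_{k-1}$, and two such restrictions agree exactly when $\alpha,\beta,\gamma$ span a rank-two root subsystem, i.e. when $\beta$ and $\gamma$ differ by $\alpha$. Thus (MP3) becomes the purely combinatorial assertion that a height-$k$ root of $I$ admits exactly $k-1$ such merging pairs among the lower roots of $I$, and it is precisely the ideal hypothesis that guarantees each required partner again belongs to $I$. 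I would establish this identity through the fine structure of the root poset: in type $A$ by directly enumerating the partners of $e_i-e_j$ (where $k=j-i$), and in general by invoking Kostant's theorem on the height distribution of positive roots together with the rank-two analysis above. With (MP1)--(MP3) verified, $\pi$ is an MAT-partition, so $\A_I$ is MAT-free; by Corollary \ref{cor:MAT} its exponents emerge as the dual partition of $(|\pi_1|,\dots,|\pi_m|)$, recovering the Sommers--Tymoczko formula.
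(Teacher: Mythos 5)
First, a point of reference: the paper does not prove Theorem \ref{thm:ideal-free} itself --- it is quoted from \cite[Theorem 1.1]{ABCHT16}, and the only independent derivation the paper offers is for type $A$, via Theorem \ref{thm:ideal-UI}, the inclusion of unit interval graphs among strongly chordal graphs, and Theorem \ref{thm:MAT-free-strong-chordal} (see \S\ref{sec:rem}(A)). Your skeleton --- take $\pi_k$ to be the hyperplanes of the height-$k$ roots of $I$ and verify that each level is an MAT-step, matching (MP1)--(MP3) of Definition \ref{def:MAT-partition-free} with hypotheses (1)--(3) of Theorem \ref{thm:MAT} --- is exactly the strategy of the original proof in \cite[\S 5]{ABCHT16}, as the paper itself remarks. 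So the architecture is the right one; the issue is that the two substantive verifications are not carried out, and one of them would fail as stated.

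For (MP2), applying the functional $\rho^{\vee}$ to a putative relation $\beta=\sum_{\alpha}c_{\alpha}\alpha$ over the height-$k$ level only yields $\sum_{\alpha}c_{\alpha}=\mathrm{ht}(\beta)/k\in(0,1)$, which is no contradiction since the $c_{\alpha}$ are arbitrary reals; the height grading by itself obstructs nothing, and what is actually needed is a linear-independence statement for $\{\beta\}\cup\{\alpha\in I:\mathrm{ht}(\alpha)=k\}$, a nontrivial lemma in the cited source (as is the fixed-height independence you invoke for (MP1), which is elementary only in type $A$). More seriously, your reduction of (MP3) to counting pairs $\beta+\gamma=\alpha$ (note: they sum to $\alpha$, they do not ``differ by $\alpha$'') is valid only when every rank-two localization is of type $A_{2}$ or $A_{1}\times A_{1}$. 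Already in $B_{2}$, for $\alpha=2\alpha_{1}+\alpha_{2}$ of height $3$ there is a single decomposition $\alpha=\alpha_{1}+(\alpha_{1}+\alpha_{2})$, whereas the required deficiency is $2$: all three lower hyperplanes restrict to the single flat $\{0\}$ on $H_{\alpha}$, contributing $|\mathcal{A}'_{X}|-1=2$ from one rank-two flat. So the ``purely combinatorial assertion'' you reduce to is false outside the simply-laced case, and even in type $A$ the identity that a height-$k$ root has exactly $k-1$ decompositions is asserted rather than proved (there it is the easy enumeration $\epsilon_{i}-\epsilon_{j}=(\epsilon_{i}-\epsilon_{m})+(\epsilon_{m}-\epsilon_{j})$, which you mention). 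Since you yourself flag (MP3) as the main obstacle and defer it to an unspecified appeal to Kostant's theorem plus a rank-two analysis that is incorrect as stated, the proposal is a faithful plan of the known proof rather than a proof: the deferred step is precisely the technical core of \cite{ABCHT16}. The final appeal to Corollary \ref{cor:MAT} for the exponents is fine once the partition is established.
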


In this paper we are mainly interested in the graphic arrangements hence root systems of type $A$. 
We will use the following construction of type $A$ root systems.
Let $\{\epsilon_1, \ldots, \epsilon_{\ell}\}$ be an orthonormal basis for $V$, and define $U : = \{ \sum_{i=1}^{\ell} r_i\epsilon_i  \in V\mid \sum_{i=1}^{\ell} r_i=0\} \simeq \R^{\ell-1}$. 
 The set $\Phi(A_{\ell-1}) = \{\pm(\epsilon_i - \epsilon_j) \mid 1 \le i<j \le \ell\}$ 
is a root system of type $A_{\ell-1}$ in $U$, with a positive system
$
\Phi^+(A_{\ell-1}) =  \{ \epsilon_i-\epsilon_j \mid 1 \le i <  j \le \ell\}
$
 and the associated set of simple roots $\Delta(A_{\ell-1}) = \{\alpha_i:=\epsilon_i - \epsilon_{i+1} \mid 1 \le i  \le \ell-1\}$.
Thus one can see that there is a one-to-one correspondence between the graphic arrangements in $\R^\ell$ and type $A_{\ell-1}$ Weyl subarrangements.

In the case of type $A$, the ideal subarrangements can be parametrized by unit interval graphs. 
Recall that a simple graph is a \textbf{unit interval graph} if it is chordal and $\mbox{(claw, net, $3$-sun)-free}$  (see Figure \ref{fig:graph-obstacle}).

\begin{figure}[htbp]
\centering
\begin{subfigure}{.25\textwidth}
  \centering
\begin{tikzpicture}
\draw (0,0) node[v](1){};
\draw (0,1) node[v](2){};
\draw (-0.865,-0.5) node[v](3){};
\draw ( 0.865,-0.5) node[v](4){};
\draw (2)--(1)--(3);
\draw (1)--(4);
%\draw (0,-1) node(){claw};
\end{tikzpicture}
  \caption*{claw}
  \label{fig:claw}
\end{subfigure}%
\begin{subfigure}{.25\textwidth}
  \centering
\begin{tikzpicture}
\draw (0,0) node[v](1){};
\draw (1,0) node[v](2){};
\draw (0.5,0.865) node[v](3){};
\draw (-0.865,-0.5) node[v](4){};
\draw ( 1.865,-0.5) node[v](5){};
\draw (0.5,1.865) node[v](6){};
\draw (4)--(1)--(3)--(6);
\draw (1)--(2)--(3);
\draw (2)--(5);
%\draw (0.5,-1) node(){net};
\end{tikzpicture}
  \caption*{net}  
  \label{fig:net}
\end{subfigure}%
\begin{subfigure}{.25\textwidth}
  \centering
\begin{tikzpicture}
\draw (0,0) node[v](1){};
\draw (1,0) node[v](2){};
\draw (2,0) node[v](3){};
\draw (0.5,0.865) node[v](4){};
\draw (1.5,0.865) node[v](5){};
\draw (1,1.73) node[v](6){};
\draw (6)--(4)--(1)--(2)--(3)--(5)--(6);
\draw (4)--(2)--(5)--(4);
%\draw (1,-0.5) node(){$ 3 $-sun};
\end{tikzpicture}
  \caption*{$3$-sun}  
  \label{fig:3sun}
\end{subfigure}%
\begin{subfigure}{.25\textwidth}
  \centering
\begin{tikzpicture}
\draw (0,0) node[v](x1){};
\draw (1,0) node[v](x2){};
\draw (1,1) node[v](x3){};
\draw (0,1) node[v](x4){};
\draw (0.5,-0.865) node[v](y1){};
\draw (1.865,0.5) node[v](y2){};
\draw (0.5,1.865) node[v](y3){};
\draw (-0.865,0.5) node[v](y4){};
\draw (x1)--(x2)--(x3)--(x4)--(x1)--cycle;
\draw (x1)--(x3);
\draw (x2)--(x4);
\draw (x1)--(y1)--(x2)--(y2)--(x3)--(y3)--(x4)--(y4)--(x1)--cycle;
%\draw (0.5,-1.5) node(){$ 4 $-sun};
\end{tikzpicture}
  \caption*{$4$-sun}  
  \label{fig:4sun}
\end{subfigure}
\caption{Some obstructions to unit interval and strongly  chordal graphs.}
\label{fig:graph-obstacle}
\end{figure}
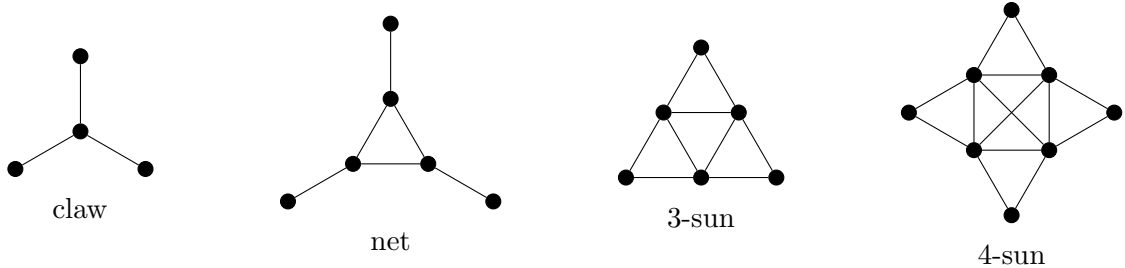

 \begin{theorem}[Ideals and unit interval graphs e.g., {\cite[Theorem 16]{TT21}}]
\label{thm:ideal-UI}
Let $G$ be a simple graph with $\ell$ vertices. 
There exists a vertex-labeling of $G$ using elements from $[\ell]$ so that the graphic arrangement $\A_G$ is an ideal subarrangement of the Weyl arrangement $\A_{\Phi^+(A_{\ell-1})}$ if and only if $G$ is a unit interval graph.
\end{theorem}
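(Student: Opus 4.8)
The plan is to reduce the statement to a purely order-theoretic computation on the root poset of type $A_{\ell-1}$ and then match the resulting condition with a classical ordering characterization of unit interval graphs. First I would fix a vertex-labeling, i.e.\ a bijection identifying $V_G$ with $[\ell]$, so that each edge $\{i,j\}$ with $i<j$ corresponds to the positive root $\epsilon_i-\epsilon_j$ and $\A_G=\A_\Psi$ with $\Psi=\{\epsilon_i-\epsilon_j \mid \{i,j\}\in E_G,\ i<j\}$. The theorem then becomes the assertion that such a labeling can be chosen with $\Psi$ an ideal if and only if $G$ is a unit interval graph.

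The key computation is to describe the partial order $\ge$ on $\Phi^+(A_{\ell-1})$ explicitly. Writing $\epsilon_i-\epsilon_j=\alpha_i+\alpha_{i+1}+\cdots+\alpha_{j-1}$, the support of $\epsilon_i-\epsilon_j$ in the simple roots is exactly the interval $[i,j-1]$, so for positive roots $\epsilon_i-\epsilon_j\ge\epsilon_k-\epsilon_l$ holds precisely when $[k,l]\subseteq[i,j]$ as integer intervals. Consequently $\Psi$ is an ideal (down-closed) if and only if, whenever $\{i,j\}\in E_G$ with $i<j$ and $i\le k<l\le j$, one also has $\{k,l\}\in E_G$; equivalently, every edge $\{i,j\}$ forces the vertex set $\{i,i+1,\ldots,j\}$ to be a clique of $G$. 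This is exactly the \emph{umbrella} (indifference-ordering) property: for $u<v<w$, $\{u,w\}\in E_G$ implies $\{u,v\},\{v,w\}\in E_G$.

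With this translation in hand, the theorem reduces to: $G$ admits a vertex ordering with the umbrella property if and only if $G$ is a unit interval graph, which is the classical characterization of proper/unit interval (indifference) graphs and is equivalent to the forbidden-subgraph definition used here. For the easy direction I would observe that the umbrella property is hereditary for induced subgraphs (the inherited order still satisfies it), and that none of $C_n$ ($n\ge 4$), the claw, the net, or the $3$-sun admits an umbrella ordering by a short finite check---e.g.\ for the claw, two of the three leaves lie on the same side of the center, and the umbrella property would then make them adjacent. Hence an umbrella ordering forces $G$ to avoid all these induced subgraphs, so $G$ is unit interval; chordality may alternatively be deduced from Theorem~\ref{thm:ideal-free} and Theorem~\ref{thm:free-chordal}.

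The hard part is the converse: constructing an umbrella ordering (equivalently, an ideal labeling) for an arbitrary chordal, $($claw, net, $3$-sun$)$-free graph. I would proceed by induction on $\ell$, peeling off a well-chosen simplicial vertex---whose existence is guaranteed by chordality---and arguing that the forbidden-subgraph hypotheses let it be inserted at an end of an umbrella ordering of the remaining graph without violating the property; the main technical obstacle is controlling where the removed vertex's neighborhood sits so that the consecutiveness of cliques is preserved. Alternatively, one may invoke the known equivalence of unit interval graphs with indifference graphs directly, which settles both directions at once.
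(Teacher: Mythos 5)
The paper does not prove this statement; it is quoted as a known result with a citation to \cite[Theorem~16]{TT21}, so there is no in-paper proof to compare against. Your reduction is the standard and correct one: in type $A$ one has $\epsilon_i-\epsilon_j\geq\epsilon_k-\epsilon_l$ exactly when $i\le k<l\le j$, so $\Psi$ being an ideal is equivalent to the labeling being an indifference (umbrella) ordering, and the theorem then becomes Roberts' classical equivalence between graphs admitting such an ordering and unit interval graphs in the forbidden-subgraph sense used here. Your easy direction (heredity plus finite checks on $C_n$, claw, net, $3$-sun) is fine. The only soft spot is the converse: the sketch of ``peel off a simplicial vertex and insert it at an end'' does not work as stated, because the inductively obtained umbrella ordering of $G\setminus v$ need not place $N_G(v)$ as a terminal segment, and repairing this is exactly the content of the classical proofs (e.g.\ via Lex-BFS or via the consecutive-clique arrangement); you acknowledge this obstacle and fall back on citing the known equivalence, which is legitimate and puts your argument at the same level of rigor as the paper's own treatment of this theorem.
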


In the study of interplay between arrangements and graphs it is thus natural to ask which graph class corresponds to the MAT-free graphic arrangements? 
An answer to this question concerns strongly chordal graphs, a class squeezed between the classes of unit interval and chordal graphs.
\begin{definition}[Strongly chordal graphs e.g., \cite{Far83}]
\label{def:strongly-chordal}
An \textbf{$n$-sun (or trampoline)} $ S_{n} $ ($n \ge 3$) is a (chordal) graph with vertex set $V_{S_{n}} = \{u_{1}, \dots, u_{n}\} \cup \{v_{1}, \dots, v_{n}\} $ and edge set 
\begin{align*}
E_{S_{n}} = \Set{\{u_{i},u_{j}\} | 1 \leq i < j \leq n} \cup \Set{\{v_{i}, u_{j}\} | 1 \leq i \leq n, j \in \{i, i+1\}}, 
\end{align*}
where we consider $ u_{n+1} = u_{1} $. 
 A simple graph is  \textbf{strongly chordal} if it is chordal and $n$-sun-free for $n\ge3$. 
 See Figure \ref{fig:graph-obstacle} for the $n$-suns with $n=3,4$.
\end{definition}

 We are ready to state the main result of the paper (whose proof will be presented in the end of \S \ref{sub:if-part}).
 \begin{theorem}[MAT-freeness and strong chordality]
\label{thm:MAT-free-strong-chordal}
Let $G$ be a simple graph. 
The graphic arrangement $\A_G$ is MAT-free if and only if $G$ is strongly chordal.
\end{theorem}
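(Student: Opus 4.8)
The plan is to recast the arrangement-theoretic conditions (MP1)--(MP3) of Definition \ref{def:MAT-partition-free} as purely graph-combinatorial conditions on an edge-labeling of $G$, the \emph{MAT-labeling} of Definition \ref{definition MAT-labeling}, and thereby reduce Theorem \ref{thm:MAT-free-strong-chordal} to the statement that $G$ admits an MAT-labeling if and only if $G$ is strongly chordal. An MAT-partition $\pi=(\pi_1,\dots,\pi_n)$ induces the edge-labeling $\lambda(e)=k$ for $e\in\pi_k$. Since a hyperplane $H_{ij}\colon x_i=x_j$ of $\mathcal{A}_G$ is the edge $\{v_i,v_j\}$, and restriction to $H_{ij}$ is contraction of that edge, one computes that (MP1) says each label class is a forest, that (MP3) says every edge $e=\{v_i,v_j\}$ with $\lambda(e)=k$ has \emph{exactly} $k-1$ common neighbours $v_c$ for which both $\{v_i,v_c\}$ and $\{v_j,v_c\}$ carry labels $<k$, and that (MP2) says no edge of label $<k$ joins two vertices lying in one connected component of the label-$k$ subgraph. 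Because an MAT-free arrangement is free by Corollary \ref{cor:MAT}(2), Theorem \ref{thm:free-chordal} already yields that $G$ is chordal in the forward direction, so in each direction only the interaction with $n$-suns remains.

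For the ``only if'' part I would show that an MAT-labeling forbids an induced $n$-sun $S_n$ (Definition \ref{def:strongly-chordal}). The model computation is the pure sun: in $S_n$ every rim edge $\{v_i,u_i\}$ has a unique common neighbour, namely $u_{i+1}$, so by (MP3) its label is at most $2$, and a rim edge of label $2$ forces both its partner rim edge at $v_i$ and the clique edge $\{u_i,u_{i+1}\}$ into the label class $S_1$. Counting $|S_1|$ in two ways then yields a contradiction: writing $a$ for the number of indices $i$ at which a rim edge has label $2$, the rim contributes $2n-a$ edges to $S_1$ while the forced clique edges $\{u_i,u_{i+1}\}$ contribute at least $a$ further distinct edges, so $|S_1|\ge 2n$; but $S_1$ is a forest on the $2n$ vertices of $S_n$ by (MP1), whence $|S_1|\le 2n-1$. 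The subtle point, which I would treat with care, is that in a larger ambient graph a rim edge may acquire \emph{external} common neighbours and hence need not have label at most $2$; I would dispose of this by an induction on the number of blocks, peeling off the top block $\pi_n$ (so that $\mathcal{A}_{n-1}$ is MAT-free by Corollary \ref{cor:MAT}(1)) and arguing that a single MAT-step applied to a strongly chordal graph cannot complete a new induced $n$-sun.

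The harder ``if'' part is to manufacture an MAT-labeling of a given strongly chordal graph $G$. Here I would lean on the clique intersection poset and the characterization of strong chordality through it due to Nevries--Rosenke (Theorem \ref{Nevries-Rosenke}): the intersections of the maximal cliques of a chordal graph organize its complete induced subgraphs into a poset whose structure, for strongly chordal $G$, is rigid enough to drive the construction. The strategy is to build $\lambda$ one block at a time, each block a forest of edges drawn from the new part of a clique in the poset and assigned so as to respect (MP1) and (MP2) automatically. Equivalently, I would extract from the poset an \emph{MAT-perfect elimination ordering} of the vertices, whose vertices are successively \emph{MAT-simplicial}, and read the labels off the reverse ordering.

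The crux, and the step I expect to fight hardest for, is the \emph{exact} count in (MP3): each newly added edge of label $k$ must see precisely $k-1$ previously labeled common neighbours, not merely at least or at most that many. Freeness, i.e.\ mere chordality, only buys the existence of \emph{some} valid addition order; pinning the tally to the exact value $k-1$ for every edge simultaneously is what forces the nested, linearly ordered neighbourhoods peculiar to strongly chordal graphs, and it is precisely here that the clique intersection poset of Theorem \ref{Nevries-Rosenke} must be used to schedule the blocks so that the common-neighbour counts come out on the nose. Once the labeling is constructed and (MP1)--(MP3) are verified, reversing the reduction of the first paragraph produces an MAT-partition and completes the proof.
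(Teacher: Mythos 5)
Your reduction to MAT-labelings is exactly the paper's Proposition \ref{prop:MATF=MATL}, and your counting argument for the bare $n$-sun is correct and in fact slicker than what the paper does in that special case: each rim edge lies in a unique triangle, hence carries label at most $2$, and a label-$2$ rim edge forces its partner rim edge and the adjacent central edge into $\pi_1$, giving $|\pi_1|\ge 2n$ against the forest bound $|\pi_1|\le 2n-1$. But the step you yourself flag as subtle is a genuine gap, and the proposed fix would not work as stated. For an induced $S_n$ inside a larger $G$ the rim edges can have arbitrarily many external common neighbours and hence arbitrarily large labels, so the count collapses; and the remedy of peeling off $\pi_n$ and claiming that ``a single MAT-step applied to a strongly chordal graph cannot complete a new induced $n$-sun'' is the theorem in disguise --- you would still have to show that a forest $\pi_n$ subject only to (ML\ref{definition MAT-labeling forest})--(ML\ref{definition MAT-labeling triangle}) cannot close up a sun, and no mechanism for this is offered. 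The paper's missing idea here is that MAT-labelings \emph{restrict}: to any node of the clique intersection poset (Lemma \ref{restriction to intersection of maximal cliques}) and to unions of such restrictions (Lemma \ref{restriction to the union}). This lets one localize to $G_0=\bigcup_i C_i$, where $C_i$ is a maximal clique containing the $i$-th triangle of the sun, and derive the contradiction there (for $n\ge 4$ the central cycle is chordless in $G_0$, contradicting chordality of $G_0$; for $n=3$ a maximal-label comparison on a maximal clique containing the centre). Your bare-sun count never actually enters.

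For the ``if'' direction you name the right ingredients (Theorem \ref{Nevries-Rosenke}, MAT-simplicial vertices, MAT-PEOs) and correctly identify the crux --- hitting the triangle count in (ML\ref{definition MAT-labeling triangle}) \emph{exactly} --- but you supply no mechanism for achieving it. The paper's construction rests on three concrete lemmas you would still need to find and prove: a merge lemma producing a common extension of compatible MAT-labelings of two cliques covering a complete graph (Lemma \ref{merge}); a gluing theorem for two MAT-labeled subgraphs meeting in a clique (Theorem \ref{glue}); and, decisively, Lemma \ref{leaf}, which says that in a $k$-crown-free clique intersection poset every antichain contains an element $X_0$ one of whose pairwise intersections contains all the others. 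That last lemma is the \emph{only} place strong chordality (via crown-freeness) is used, and it is what schedules the gluing so that the common-neighbour counts ``come out on the nose.'' Without an argument of this kind, and without the induction over the ranked poset that assembles compatible labelings of all nodes (Lemma \ref{node MAT-labeling}), the proposal remains a plan rather than a proof.
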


Cuntz-M{\"u}cksch  \cite[Problem 48]{CM20} asked if the class of MAT-free arrangements is closed under taking localization.  
 An important consequence of our Theorem \ref{thm:MAT-free-strong-chordal} is an affirmative answer for this question in the case of graphic arrangements.
 \begin{corollary}
 \label{cor:close-localization}
MAT-freeness of graphic arrangements is closed under taking localization.  
\end{corollary}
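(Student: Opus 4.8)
The plan is to derive Corollary \ref{cor:close-localization} as a graph-theoretic consequence of the main Theorem \ref{thm:MAT-free-strong-chordal}, rather than working directly with arrangements. First I would translate the localization operation on a graphic arrangement into a corresponding operation on the underlying graph. Recall that for a flat $X \in L(\A_G)$, the localization $(\A_G)_X$ consists of all hyperplanes $H_{v_i} - H_{v_j}$ containing $X$; because flats of a graphic arrangement correspond to partitions of the vertex set into connected blocks (each block being a set of vertices forced to have equal coordinates by the hyperplanes through $X$), the localization $(\A_G)_X$ is again a graphic arrangement. The key observation I would establish is that $(\A_G)_X = \A_{G'}$ where $G'$ is an induced subgraph of $G$: specifically, $G'$ is obtained by restricting to those edges $\{v_i, v_j\} \in E_G$ whose hyperplane contains $X$, and one checks that this collection of edges spans an induced subgraph on its vertex set (since an edge between two vertices in the same block of the partition determined by $X$ automatically has its hyperplane containing $X$).

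Having reduced the statement to graphs, the argument then rests entirely on the following purely combinatorial fact: the class of strongly chordal graphs is closed under taking induced subgraphs. This is immediate from Definition \ref{def:strongly-chordal}, since chordality (being $C_n$-free for $n > 3$) and $n$-sun-freeness are both hereditary properties — an induced subgraph of a graph with no induced $C_n$ (respectively no induced $n$-sun) likewise contains no such induced configuration. Thus if $G$ is strongly chordal and $G'$ is any induced subgraph, then $G'$ is strongly chordal as well.

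Assembling these pieces gives the corollary. Suppose $\A_G$ is MAT-free and let $X \in L(\A_G)$ be arbitrary. By Theorem \ref{thm:MAT-free-strong-chordal}, $G$ is strongly chordal. By the reduction above, $(\A_G)_X = \A_{G'}$ for some induced subgraph $G' \le G$, and since strong chordality is hereditary, $G'$ is strongly chordal. Applying Theorem \ref{thm:MAT-free-strong-chordal} in the reverse direction, $\A_{G'} = (\A_G)_X$ is MAT-free. As $X$ was arbitrary, every localization of $\A_G$ is MAT-free, which is the assertion.

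I expect the only genuine obstacle to be the first reduction step, namely verifying cleanly that the localization of a graphic arrangement is again a graphic arrangement on an induced subgraph. This is essentially folklore and follows from the standard identification of $L(\A_G)$ with the partition lattice of connected vertex sets, but it requires one to be careful that the edge set picked out by the flat $X$ really is induced (i.e. that no edge is spuriously omitted). Once this bookkeeping is handled, the remaining hereditary argument is routine, and the corollary falls out by a direct double application of the main theorem.
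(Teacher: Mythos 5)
Your argument is the same as the paper's: translate localization into a graph operation, invoke the hereditary nature of strong chordality, and apply Theorem \ref{thm:MAT-free-strong-chordal} in both directions. One inaccuracy in your reduction step, though: the localization $(\A_G)_X$ is \emph{not} in general $\A_{G'}$ for an induced subgraph $G'$ of $G$. The flat $X$ determines a partition of $V_G$ into blocks, and $(\A_G)_X$ consists of exactly the edges of $G$ lying inside a single block; if $G$ has an edge between two vertices in \emph{different} non-singleton blocks, that edge is omitted even though both endpoints appear, so the resulting graph is the disjoint union $\bigsqcup_B G[B]$ of the induced subgraphs on the blocks rather than the induced subgraph on their union (e.g.\ $G=K_4$ minus one edge with $X$ given by $x_1=x_2$, $x_3=x_4$). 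Your parenthetical check only rules out spuriously omitted within-block edges and does not address cross-block edges. The repair is immediate and the conclusion survives: each $G[B]$ is strongly chordal by heredity, and a disjoint union of strongly chordal graphs is strongly chordal since any induced $C_n$ or $n$-sun is connected and hence lies in one component; the paper elides this same point.
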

 
%******************************************************************************** 

\section{More on (strongly) chordal graphs}
\label{sec:more}
\quad
In this section, we recall some other characterizations of (strongly) chordal graphs that will be useful for our discussion later.

First we collect some terminology and notation from graph theory. 
Let $ G = (V_{G}, E_{G}) $ be a simple graph. 
For $S \subseteq V_{G}$, denote by $G[S] = (S, E_{G[S]})$ where $E_{G[S]} =\{ \{u,v\} \in E_G \mid u, v \in S\}$ the \textbf{(vertex-)induced subgraph} of $S$.
If $v$ is a vertex of $G$ (sometimes $v \in G$ is used) then by $G\setminus  v$ we mean the induced subgraph $G[V_G\setminus\{v\}]$. 
For $ F \subseteq E_{G} $, define the subgraphs $ G_{F} \coloneqq (V_{G}, F) $ and $ G\setminus F \coloneqq (V_{G}, E_{G} \setminus F) $.
If $e$ is an edge of $G$ (sometimes $e \in G$ is used) then by $G\setminus e$ we mean the subgraph $G \setminus \{e\} $. 

An \textbf{$n$-cycle} $C_{n}$ $(n\ge3)$ is a graph with vertex set $\{v_1, v_2, \ldots, v_n\}$ and edge set $\{\{v_i, v_{i+1}\} \mid 1 \le i\le n\}$ where $v_{n+1}=v_1$. 
The $3$-cycle is also called a \textbf{triangle}.
The  \textbf{length}  of a cycle is its number of edges. 
A \textbf{chord} of $C$ is an edge not in the edge set of $C$ whose endvertices are in the vertex set.

A \textbf{clique} of $G$ is a subset of $V_{G}$ such that every two distinct vertices are adjacent. 
For each $v \in V_{G}$, its  \textbf{neighborhood} in $G$ is defined by $ N_{G}(v) =\{ u \in V_{G} \mid \{u,v\} \in E_G\}$. 
A vertex $ v \in V_{G} $ is called \textbf{simplicial} if  its neighborhood is a clique. 
An ordering $ (v_{1}, \dots, v_{\ell}) $ of $ G $ (a linear order on $ V_{G} $) is called a \textbf{perfect elimination ordering (PEO)}   if $ v_{i} $ is simplicial in the induced subgraph $ G[\{v_{1}, \dots, v_{i}\}] $ for each $ i \in [\ell] $. 
The following characterization of chordal graphs is useful to determine the exponents of the corresponding graphic arrangement (e.g., \cite[Lemma 3.4]{ER94}).

\begin{theorem}[Chordality and PEO {\cite{fulkerson1965incidence-pjom}}]
  \label{thm:chordal-PEO}
A simple graph is chordal if and only if it has a perfect elimination ordering. 
\end{theorem}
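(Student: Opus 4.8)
The plan is to prove both implications, with the forward direction (PEO implies chordal) being routine and the reverse direction (chordal implies PEO) resting on the classical lemma of Dirac that every chordal graph possesses a simplicial vertex.

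First I would show that a PEO forces chordality. Suppose $(v_{1}, \ldots, v_{\ell})$ is a PEO and, for contradiction, that $G$ contains an induced cycle $C$ of length at least $4$. Let $v_{i}$ be the vertex of $C$ carrying the largest index among the vertices of $C$. Its two neighbors along $C$ then both lie in $\{v_{1},\ldots,v_{i}\}$, so they are neighbors of $v_{i}$ inside $G[\{v_{1},\ldots,v_{i}\}]$; since $v_{i}$ is simplicial in this induced subgraph, these two neighbors are adjacent in $G$. As $C$ has length at least $4$, the two cycle-neighbors of $v_{i}$ are non-consecutive on $C$, so the edge joining them is a chord of $C$, contradicting that $C$ is induced. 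Hence no such cycle exists and $G$ is chordal.

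For the reverse implication I would argue by induction on $|V_{G}|$, the base case $|V_{G}|\le 1$ being trivial. Assuming the lemma of Dirac stated below, pick a simplicial vertex $v$ of the chordal graph $G$ and set $v_{\ell} := v$. The deletion $G\setminus v$ is an induced subgraph of $G$, hence again chordal (any induced cycle of $G\setminus v$ is already an induced cycle of $G$), so by the inductive hypothesis it admits a PEO $(v_{1},\ldots,v_{\ell-1})$. I claim $(v_{1},\ldots,v_{\ell-1},v_{\ell})$ is a PEO of $G$: for $i<\ell$ the set $\{v_{1},\ldots,v_{i}\}$ omits $v=v_{\ell}$, so $G[\{v_{1},\ldots,v_{i}\}]=(G\setminus v)[\{v_{1},\ldots,v_{i}\}]$ and $v_{i}$ is simplicial there, while $v_{\ell}=v$ is simplicial in $G$ by choice. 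This settles the reverse implication modulo Dirac's lemma.

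The main obstacle is therefore Dirac's lemma: every chordal graph with at least one vertex has a simplicial vertex. I would prove the slightly stronger statement---every chordal graph that is not complete has two \emph{non-adjacent} simplicial vertices---again by induction on $|V_{G}|$, since it supplies a usable inductive hypothesis. If $G$ is complete every vertex is simplicial and we are done; otherwise fix non-adjacent $a,b\in V_{G}$ and let $S$ be an inclusion-minimal subset of $V_{G}$ whose removal places $a$ and $b$ in different connected components of $G\setminus S$. Writing $A$ and $B$ for the components of $G\setminus S$ containing $a$ and $b$, I would first show that $S$ is a clique: minimality of $S$ forces every vertex of $S$ to have a neighbor in both $A$ and $B$, so two non-adjacent vertices of $S$ would be joined by a shortest path through $A$ and a shortest path through $B$, producing an induced cycle of length at least $4$ and contradicting chordality. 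Now apply the inductive hypothesis to the proper induced subgraphs $G[A\cup S]$ and $G[B\cup S]$, each of which is chordal and smaller than $G$: each yields a simplicial vertex lying in $A$, respectively in $B$---when the subgraph is complete any of its vertices works, and otherwise among its two non-adjacent simplicial vertices at least one avoids the clique $S$. A vertex $x\in A$ that is simplicial in $G[A\cup S]$ has, by the separator property, no neighbor in $B$, so $N_{G}(x)\subseteq A\cup S$ and $x$ is in fact simplicial in $G$; symmetrically one obtains a simplicial vertex $y\in B$. Since $x$ and $y$ lie in distinct components of $G\setminus S$ they are non-adjacent, completing the induction and hence the proof.
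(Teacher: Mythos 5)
Your proof is correct. Note, however, that the paper does not prove Theorem \ref{thm:chordal-PEO} at all: it is quoted as a classical result with a citation to Fulkerson--Gross, so there is no internal proof to compare against. What you supply is the standard self-contained argument: the easy direction via the highest-indexed vertex of an induced cycle, and the hard direction via Dirac's theorem that a non-complete chordal graph has two non-adjacent simplicial vertices, proved by induction using a minimal $(a,b)$-separator $S$, the fact that $S$ is a clique (two non-adjacent vertices of $S$ would close an induced cycle of length at least $4$ through the two components), and the observation that a simplicial vertex of $G[A\cup S]$ lying in $A$ remains simplicial in $G$. All steps check out, including the strengthened inductive statement needed to guarantee a simplicial vertex off the clique $S$, and the degenerate case where $a$ and $b$ are already disconnected (take $S=\varnothing$). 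It is worth pointing out that your argument is not foreign to the paper: the separator-is-a-clique fact is exactly the paper's quoted Theorem \ref{Dirac minimal vertex separator}, and your induction producing two non-adjacent simplicial vertices is precisely the template the authors adapt in Lemma \ref{lem:MATS-existence}(2) to produce two non-adjacent \emph{MAT-simplicial} vertices, so your write-up in effect reconstructs the classical prototype of which the paper's lemma is the labeled analogue.
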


Let $ a,b \in V_{G} $ be two distinct vertices which belong to the same connected component of $ G $. 
A subset $ S \subseteq V_{G} $ is called an \textbf{$ (a,b) $-separator} if $ a $ and $ b $ belong to different connected components of $ G[V_{G}\setminus S] $. 
An $ (a,b) $-separator is \textbf{minimal} if it does not properly contain any $ (a,b) $-separator. 
A \textbf{minimal vertex separator} is a minimal $ (a,b) $-separator for some $ a,b \in V_{G} $. 
The following characterization of chordality will also be useful for some inductive arguments later. 
\begin{theorem}[{\cite[Theorem 1]{dirac1961rigid-aadmsduh}}]\label{Dirac minimal vertex separator}
A simple graph is chordal if and only if every minimal vertex separator is a clique. 
\end{theorem}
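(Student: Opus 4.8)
The plan is to prove both implications directly, the key tool being the standard structural fact that a minimal $(a,b)$-separator is a \emph{full} separator. So first I would record the following observation: if $S$ is a minimal $(a,b)$-separator and $C_a, C_b$ denote the connected components of $G[V_G \setminus S]$ containing $a$ and $b$ respectively, then every vertex of $S$ has a neighbor in $C_a$ and a neighbor in $C_b$. This is immediate from minimality: if some $s \in S$ had no neighbor in $C_a$, then the component of $a$ in $G[V_G \setminus (S \setminus \{s\})]$ would still be exactly $C_a$, so $S \setminus \{s\}$ would already separate $a$ from $b$, contradicting minimality. Note also that $a, b \notin S$ by definition.

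For the ``only if'' direction (chordal $\Rightarrow$ every minimal vertex separator is a clique), I would fix a minimal $(a,b)$-separator $S$ and two distinct vertices $x, y \in S$, and show $\{x,y\} \in E_G$. By the observation, both $x$ and $y$ have neighbors in $C_a$ and in $C_b$; since $C_a$ is connected, there is an $x$--$y$ path with all interior vertices in $C_a$, and likewise one through $C_b$. I would take shortest such paths $P_a$ and $P_b$, which are then induced in $G$ (a chord could be used to shorten the path while keeping the interior inside the same component). Concatenating $P_a$ and $P_b$ yields a cycle through $x$ and $y$ of length at least $4$, since each path contributes at least one interior vertex. Because $C_a$ and $C_b$ are distinct components of $G[V_G \setminus S]$, there is no edge between the interior of $P_a$ and the interior of $P_b$; combined with the fact that each $P_i$ is induced, the only candidate chord of this cycle is the edge $\{x,y\}$. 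Chordality of $G$ forces a chord, hence $\{x,y\} \in E_G$, so $S$ is a clique.

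For the ``if'' direction I would argue by contraposition: assuming $G$ is not chordal, I would exhibit a minimal vertex separator that is not a clique. Take an induced cycle $v_1 v_2 \cdots v_n$ with $n \ge 4$. Then $v_1$ and $v_3$ are non-adjacent, and the cycle supplies two internally disjoint $v_1$--$v_3$ paths, one with interior $\{v_2\}$ and one with interior $\{v_4, \dots, v_n\}$. Any minimal $(v_1,v_3)$-separator $S$ must meet the interior of each path, so $v_2 \in S$ and $v_j \in S$ for some $4 \le j \le n$. Since the cycle is induced, $v_2$ is adjacent only to $v_1$ and $v_3$, hence not to $v_j$; therefore $S$ contains two non-adjacent vertices and is not a clique.

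The step I expect to be the main obstacle is the ``only if'' direction, specifically the choice of the two paths and the verification that the resulting cycle has no chord other than $\{x,y\}$. Ensuring the interiors lie strictly inside their respective components (so that no cross edges exist) relies on the full-separator observation, and ensuring each path is genuinely induced relies on the minimal-length choice; once both are in hand, the absence of any chord besides $\{x,y\}$ follows and chordality does the rest. The ``if'' direction, by contrast, is a short counting argument on an induced cycle.
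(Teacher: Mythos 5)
Your proof is correct, but note that the paper does not actually prove this statement: it is quoted as Dirac's classical theorem (\cite{dirac1961rigid-aadmsduh}) and used as a black box, so there is nothing internal to compare against. Your argument is the standard one for this result: the ``full separator'' observation (every vertex of a minimal $(a,b)$-separator has neighbours in both $C_a$ and $C_b$) is exactly the right lemma, the two shortest induced paths through $C_a$ and $C_b$ concatenate to a cycle whose only possible chord is $\{x,y\}$, and the converse via an induced cycle of length at least $4$ is a clean contrapositive. One cosmetic point: before asserting that each of $P_a$ and $P_b$ contributes at least one interior vertex, you should dispose of the trivial case $\{x,y\}\in E_G$ (in which the shortest such path is the edge itself and there is nothing to prove); with that case set aside, both shortest paths are forced to pass through their respective components and the length-$\geq 4$ claim holds. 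Everything else, including the verification that no cross-edges exist between the interiors because $C_a$ and $C_b$ are distinct components of $G[V_G\setminus S]$, is airtight.
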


A \textbf{maximal clique} is a clique that it is not a subset of any other clique. 
A \textbf{largest (or maximum) clique} is a clique that has the largest possible number of vertices. 
Denote by $\mcK(G)$ the set of all maximal cliques of $G$.

Let $G$ be a chordal graph. 
Let $ \mathcal{P}_{G} $ be the poset consisting of  all \emph{possibly-empty} intersections of maximal cliques of $ G $, i.e, 
$$
 \mathcal{P}_{G} = \Set{\bigcap_{C \in \mathcal{B}}C  |   \varnothing \ne \mathcal{B} \subseteq \mcK(G) },
$$
where the partial order is given by inclusion $X_1\le X_2\Leftrightarrow X_1\subseteq X_2$ for $X_1, X_2 \in \mathcal{P}_{G}$. 
We call $ \mathcal{P}_{G} $ the \textbf{clique intersection poset}\footnote{The poset $ \mathcal{P}_{G}  \setminus \{\varnothing\}$ where  $\varnothing$ is the empty set was first defined in \cite{nevries2015characterizing-dam} where its Hasse diagram is called \textbf{clique arrangement}. It is not to be confused with ``hyperplane arrangement".}
 of $ G $. 
Note that $ \mathcal{P}_{G} $  is a meet-semilattice (not necessarily graded) whose maximal elements are the maximal cliques of $G$, and minimal element $ \hat{0}:=\bigcap_{C \in \mcK(G) }C \in  \mathcal{P}_{G} $ is the clique consisting of the dominating vertices\footnote{A dominating vertex is a  vertex that is adjacent to all other vertices of the graph. The presence of minimum  element $ \hat{0}$  (possibly the empty set) is helpful for us, e.g., to define the rank of nodes in Lemma \ref{node MAT-labeling}.}
of $ G $.
We call an element of $ \mathcal{P}_{G} $ a \textbf{node}.

\begin{remark}\label{Ho and Lee}
Ho-Lee \cite[Lemma 2.1]{ho1989counting-ipl} showed that a nonempty subset $ S \subseteq V_{G} $ is a minimal vertex separator if and only if $ S = C \cap C^{\prime} $ for distinct maximal cliques $ C $ and $ C^{\prime} $ forming an edge in some clique tree of $G$.
Therefore every minimal vertex separator of $ G $ belongs to $ \mathcal{P}_{G} $. 
\end{remark}

A \textbf{$ k $-crown}\footnote{The $ k $-crown here plays a role of \textbf{bad $ k $-cycle} in  \cite{nevries2015characterizing-dam}. 
More precisely, there exists an induced $k$-crown of the clique intersection poset if and only if there exists an induced bad $k$-cycle of the clique arrangement. The bottom and top elements of a $k$-crown are  the starters and terminals of the corresponding bad $k$-cycle respectively. Also, it is not to be confused with the \textbf{$k$-crown graph} which is a graph on $\{u_{1}, \dots, u_{n}\} \cup \{v_{1}, \dots, v_{n}\}$ and with an edge from $u_i$ to $v_j$ whenever $i\ne j$.}
$(k\ge1)$
 is a poset on $ \{x_{1}, \dots, x_{k}, y_{1}, \dots, y_{k}\} $ with relations $ x_{i} < y_{i} $ and $ x_{i} < y_{i+1} $ for all $1 \le  i \le k$ (counted modulo $k$) and there are no other relations. 
 See Figure \ref{fig:crown} for the $k$-crowns with $k=3,4$.
A poset $ P $ is called \textbf{$ k $-crown-free} if there exists no induced subposet\footnote{A poset $(Q,\le_Q)$ is an \textbf{induced subposet} of a poset $(P,\le_P)$ if $Q\subseteq P$ and for any $a,b \in Q$ it holds that $a \le_Q b$ if and only if $a \le_P b.$} of $ P $ isomorphic to the $ k $-crown.  
The following characterization of strongly chordal graphs will play a crucial role in the proof of the ``if" part of our main Theorem \ref{thm:MAT-free-strong-chordal} (strong chordality implies MAT-freeness \S\ref{sub:if-part}).
\begin{theorem}[Strong chordality and clique intersection poset {\cite[Theorem 1]{nevries2015characterizing-dam}}]\label{Nevries-Rosenke}
A chordal graph $ G $ is strongly chordal if and only if its clique intersection poset $ \mathcal{P}_{G} $ is $ k $-crown-free for all $ k \geq 3 $.\footnote{It is not hard to see that  $ \mathcal{P}_{G} $ is $ k $-crown-free for all $ k \geq 3 $ if and only if $ \mathcal{P}_{G}  \setminus \{\varnothing\}$  is $ k $-crown-free for all $ k \geq 3 $.}
\end{theorem}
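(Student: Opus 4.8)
The plan is to prove the biconditional in its contrapositive form on each side, by setting up an explicit dictionary between induced $n$-suns of $G$ (the obstructions to strong chordality, Definition \ref{def:strongly-chordal}) and induced $k$-crowns of $\mcP_G$. Since $G$ is already assumed chordal, strong chordality fails exactly when $G$ contains an induced $n$-sun for some $n\ge3$, while crown-freeness fails exactly when $\mcP_G$ contains an induced $k$-crown for some $k\ge3$; so it suffices to show that $G$ has an induced $n$-sun if and only if $\mcP_G$ has an induced $k$-crown. I would prove the two implications separately, as a ``sun $\Rightarrow$ crown'' construction and a ``crown $\Rightarrow$ sun'' extraction.

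For the forward construction (sun $\Rightarrow$ crown), fix an induced $n$-sun with hub clique $U=\{u_1,\dots,u_n\}$, rim vertices $v_1,\dots,v_n$, and triangles $T_i=\{u_i,u_{i+1},v_i\}$ (indices mod $n$). For each $i$ choose a maximal clique $\hat T_i \in \mcK(G)$ with $T_i \subseteq \hat T_i$, and set $y_i := \hat T_i$ and $x_i := \hat T_i \cap \hat T_{i+1} \in \mcP_G$. One checks directly that $u_{i+1}\in x_i$, that $x_i \subseteq y_i$ and $x_i \subseteq y_{i+1}$, and that each rim vertex is private: because $v_i$ is adjacent in $G$ only to $u_i$ and $u_{i+1}$ among the sun vertices and to no $v_j$, the relation $v_i \in \hat T_j$ forces $j=i$. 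This privacy shows the $y_i$ are pairwise distinct maximal cliques (hence an antichain), and it witnesses strictness $x_i \subsetneq y_i$, $x_i \subsetneq y_{i+1}$ via $v_i \in y_i\setminus x_i$ and $v_{i+1}\in y_{i+1}\setminus x_i$. Thus all the required crown relations are present.

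For the reverse extraction (crown $\Rightarrow$ sun), start from an induced $k$-crown with bottoms $x_1,\dots,x_k$ and tops $y_1,\dots,y_k$, where the only bottoms below $y_i$ are $x_{i-1}$ and $x_i$. Each node is an intersection of cliques, hence itself a clique, so the $y_i$ are cliques of $G$; I would assemble the hub of a sun from vertices lying in the shared bottoms and the rim from ``private'' vertices of the tops (a vertex $w_i \in y_i\setminus(x_{i-1}\cup x_i)$ for each $i$), then read off the adjacencies and, crucially, the non-adjacencies between rim and hub vertices directly from the absent order relations $x_j \not\le y_i$ for $j\notin\{i-1,i\}$. Chordality of $G$ is used here through Theorem \ref{Dirac minimal vertex separator} to keep the relevant separators clique-shaped.

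The main obstacle, in both directions, is that the cliques coming from the sun are in general strictly contained in the maximal cliques of the ambient graph $G$, and this gap can create or destroy order relations: in the forward direction the large cliques $\hat T_i$ may absorb extra hub vertices and thereby produce spurious containments $x_i \subseteq y_j$ with $j\notin\{i,i+1\}$, or comparabilities among the $x_i$, while in the reverse direction a private rim vertex of $y_i$ need not exist a priori. I expect to resolve this by working inside a fixed clique tree of $G$ and replacing the ad hoc intersections $\hat T_i \cap \hat T_{i+1}$ by the minimal vertex separators they contain, which by Remark \ref{Ho and Lee} are exactly the edge-labels of the clique tree and hence genuine nodes of $\mcP_G$ with controlled intersection behavior; combined with first passing to an induced sun (respectively crown) that is minimal in its number of vertices, this should pin down the poset relations to be exactly those of a crown (respectively the adjacencies to be exactly those of a sun), possibly after shortening the cycle to some $k'\ge3$. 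Verifying that this minimality together with the clique-tree and Ho--Lee machinery eliminates all spurious relations is the technical heart of the argument.
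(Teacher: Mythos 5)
Your overall frame---reducing the biconditional to ``$G$ has an induced $n$-sun if and only if $\mathcal{P}_{G}$ has an induced $k$-crown''---is the right one, and it matches the cited source: note that the paper itself does not prove Theorem \ref{Nevries-Rosenke} but imports it from Nevries--Rosenke, whose ``bad $k$-cycles'' are exactly the crowns (see the paper's footnote on $k$-crowns), so the comparison here is with that external proof. Your forward direction (sun $\Rightarrow$ crown) is essentially correct, and in fact the obstacle you flag there is illusory: since the sun is an \emph{induced} subgraph, $v_i \in \hat{T}_i$ is nonadjacent to every $u_j$ with $j \notin \{i,i+1\}$ and to every $v_j$ with $j \neq i$, so $\hat{T}_i$ cannot absorb any extra sun vertices at all; and because $u_{i+1} \in x_i = \hat{T}_i \cap \hat{T}_{i+1}$, any containment $x_i \subseteq y_j$ forces $u_{i+1} \in \hat{T}_j$, hence $u_{i+1}$ adjacent to $v_j$, hence $j \in \{i, i+1\}$. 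The same membership gives $u_{i+1} \in x_i \setminus x_j$ for $j \neq i$, killing comparabilities among the bottoms, and maximality of the $y_j$ rules out $y_j \subseteq x_i$. So the $n$-sun yields an $n$-crown outright, with no need for clique trees, minimal separators, or minimal suns; you had all the ingredients (privacy of $v_i$, the membership $u_{i+1} \in x_i$) but did not assemble them.

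The genuine gap is the converse (crown $\Rightarrow$ sun), which is precisely the implication the paper actually uses (strong chordality $\Rightarrow$ crown-freeness, invoked in Lemma \ref{leaf}), and your sketch of it would fail as written. Concretely: (i) a private rim vertex $w_i \in y_i \setminus (x_{i-1} \cup x_i)$ need not exist, since nothing in the crown axioms prevents $y_i = x_{i-1} \cup x_i$, and the tops of an induced crown in $\mathcal{P}_{G}$ need not be maximal cliques; (ii) you give no reason why hub vertices chosen from the bottoms $x_1, \dots, x_k$ should be pairwise adjacent---the bottoms of a crown need not lie in any common clique of $G$; (iii) most seriously, reading non-adjacencies ``directly from the absent order relations $x_j \not\le y_i$'' is unsound: $x_j \not\subseteq y_i$ only says \emph{some} vertex of $x_j$ avoids $y_i$, while your chosen vertices may be adjacent in $G$ through a maximal clique that is not among the crown's tops; absence of relations in an induced subposet never forbids edges of the ambient graph. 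This is exactly where Nevries--Rosenke invest their real work (a minimality argument on bad cycles together with clique-tree analysis, with the extracted sun possibly of a different size $k' \geq 3$), and your proposal explicitly defers it as ``the technical heart'' without supplying the argument. As it stands, you have a complete proof of only one implication---and, ironically, not the one the paper needs.
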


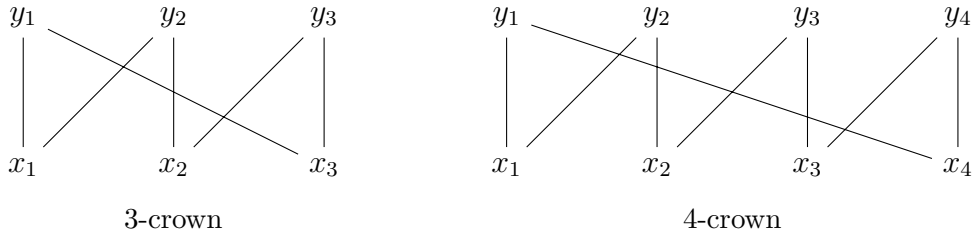
\begin{figure}[htbp]
\centering
\begin{subfigure}{.35\textwidth}
  \centering
\begin{tikzpicture}[scale=1]
  \node (x1) at (0,0) {$x_1$};
  \node (x2) at (2,0) {$x_2$};
  \node (x3) at (4,0) {$x_3$};
    \node (y1) at (0,2) {$y_1$};
  \node (y2) at (2,2) {$y_2$};
  \node (y3) at (4,2) {$y_3$};
  \draw (x1) -- (y1) -- (x3) -- (y3) -- (x2) -- (y2)-- (x1);
\end{tikzpicture}
  \caption*{$3$-crown}
  \label{fig:3cr}
\end{subfigure}%
\qquad
\begin{subfigure}{.45\textwidth}
  \centering
\begin{tikzpicture}[scale=1]
  \node (x1) at (0,0) {$x_1$};
  \node (x2) at (2,0) {$x_2$};
  \node (x3) at (4,0) {$x_3$};
    \node (x4) at (6,0) {$x_4$};
    \node (y1) at (0,2) {$y_1$};
  \node (y2) at (2,2) {$y_2$};
  \node (y3) at (4,2) {$y_3$};
    \node (y4) at (6,2) {$y_4$};
  \draw (x1) -- (y1) -- (x4) -- (y4) -- (x3) -- (y3) -- (x2) -- (y2)-- (x1);
\end{tikzpicture}
  \caption*{$4$-crown}
  \label{fig:4cr}
\end{subfigure}
\caption{Hasse diagrams of crowns.}
\label{fig:crown}
\end{figure}

%******************************************************************************** 
\section{MAT-freeness implies strong chordality}
\subsection{MAT-labeling of graphs}
\label{subsec:MAT-labeling of graphs}
\quad
In this subsection, we show that MAT-free graphic arrangements (Definition \ref{def:MAT-partition-free}) can be completely determined by a  special edge-labeling of graphs. 
First we show that the condition of being admitted a ``partition" of a (nonempty) MAT-free arrangement is actually implied by the three conditions (MP\ref{definition MAT partition 1}), (MP\ref{definition MAT partition 2}), and (MP\ref{definition MAT partition 3}). 
\begin{proposition}\label{no internal empty block}
An arrangement $ \mathcal{A} $ is MAT-free if and only if $ \mathcal{A} $ can be decomposed into a disjoint union of \emph{possibly-empty} subsets $ \pi_{1}, \dots, \pi_{n} $ of $ \mathcal{A} $ satisfying (MP\ref{definition MAT partition 1}), (MP\ref{definition MAT partition 2}), and (MP\ref{definition MAT partition 3}). 
\end{proposition}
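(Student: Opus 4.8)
The ``only if'' direction is immediate: an MAT-partition is by definition a decomposition into nonempty (hence possibly-empty) subsets satisfying (MP\ref{definition MAT partition 1})--(MP\ref{definition MAT partition 3}), so nothing is to prove. For the ``if'' direction the plan is to start from a decomposition $(\pi_1,\dots,\pi_n)$ into possibly-empty blocks satisfying the three conditions and show that simply deleting the empty blocks produces a genuine MAT-partition. The first observation is that (MP\ref{definition MAT partition 1}) depends only on the block itself, while (MP\ref{definition MAT partition 2}) depends only on the block together with the \emph{set} $\mathcal{A}_{k-1}=\pi_1\sqcup\cdots\sqcup\pi_{k-1}$; since inserting or deleting empty blocks does not change this set, both conditions are insensitive to empty blocks and survive deletion and reindexing. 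The only delicate point is (MP\ref{definition MAT partition 3}), whose right-hand side is the literal index $k-1$: after deletion a nonempty block originally in position $k$ moves to position $k-(\text{number of empty blocks before it})$, so (MP\ref{definition MAT partition 3}) is preserved under reindexing if and only if no empty block precedes any nonempty block. Thus everything reduces to proving there is ``no internal empty block'', i.e.\ that all empty blocks are trailing.

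I would prove this by induction, simultaneously identifying the initial segment of nonempty blocks with a genuine MAT-partition. Let $k_1<\cdots<k_b$ be the positions of the nonempty blocks. First, $k_1=1$: if the first block were empty, then for the first nonempty block, at some position $m\ge 2$, one has $\mathcal{A}_{m-1}=\varnothing$ and $(\{H\})^{H}=\varnothing$, so the left-hand side of (MP\ref{definition MAT partition 3}) is $0$, forcing $m-1=0$, a contradiction. For the inductive step, assume $k_i=i$ for all $i<j$. Then $\mathcal{A}_{j-1}=\pi_1\sqcup\cdots\sqcup\pi_{j-1}$ consists of the first $j-1$ nonempty blocks with correctly matching indices, hence is a genuine MAT-partition; by Corollary \ref{cor:MAT} it is free and its largest exponent equals the number of blocks, namely $j-1$. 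Take $H\in\pi_{k_j}$. Since the blocks strictly between positions $j-1$ and $k_j$ are empty, $\mathcal{A}_{k_j-1}=\mathcal{A}_{j-1}$, and (MP\ref{definition MAT partition 3}) reads
\[
|\mathcal{A}_{j-1}|-\bigl|(\mathcal{A}_{j-1}\cup\{H\})^{H}\bigr| = k_j-1 .
\]
Together with $k_j\ge j$, the proof will be complete once I bound the left-hand side by $j-1$, for then $k_j-1\le j-1\le k_j-1$ forces $k_j=j$, the next block slots into position $j$, and $(\pi_1,\dots,\pi_j)$ is again a genuine MAT-partition.

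The crux --- and the step I expect to be the main obstacle --- is therefore the following inequality for an arbitrary free arrangement $\mathcal{B}$ with maximal exponent $e$ and any hyperplane $H\notin\mathcal{B}$:
\[
|\mathcal{B}|-\bigl|(\mathcal{B}\cup\{H\})^{H}\bigr| \le e .
\]
By the Factorization Theorem \ref{thm:Factorization}, $|\mathcal{B}|$ equals the sum of the exponents of $\mathcal{B}$, so the inequality is equivalent to a lower bound on the size of the restriction. I would establish this through the characteristic-polynomial/restriction theory of free arrangements, writing the deletion--restriction relation $\chi_{\mathcal{B}\cup\{H\}}=\chi_{\mathcal{B}}-\chi_{(\mathcal{B}\cup\{H\})^{H}}$ and comparing top coefficients against the factorization $\chi_{\mathcal{B}}=\prod_i(t-d_i)$. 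A purely local estimate is insufficient: the left-hand side equals $\sum_{Z}(|\mathcal{B}_Z|-1)$ summed over the rank-two flats $Z\subseteq H$, and this can involve many flats, so the bound is genuinely global and reflects that, for free arrangements, collapses onto a single new hyperplane are controlled by the top exponent. This inequality is implicit in the machinery behind the multiple addition Theorem \ref{thm:MAT}, and I would either isolate it as a preliminary lemma or invoke it from the free-arrangement literature. Granting it, the induction closes: the nonempty blocks occupy positions $1,\dots,b$, the trailing empty blocks do not alter $\mathcal{A}=\mathcal{A}_b$, and $(\pi_1,\dots,\pi_b)$ is an MAT-partition, so $\mathcal{A}$ is MAT-free.
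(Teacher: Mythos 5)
Your argument follows essentially the same route as the paper's: both reduce the statement to showing that no empty block can precede a nonempty one, and both hinge on exactly the inequality you isolate, namely $ |\mathcal{B}| - |(\mathcal{B}\cup\{H\})^{H}| \leq e $ for a free arrangement $ \mathcal{B} $ with maximal exponent $ e $ and $ H \notin \mathcal{B} $ (the paper derives a contradiction from a minimal internal empty block rather than running your explicit induction on block positions, but that difference is cosmetic). The one substantive caveat concerns your proposed proof of that crux inequality: comparing coefficients in the deletion--restriction identity $ \chi_{\mathcal{B}\cup\{H\}} = \chi_{\mathcal{B}} - \chi_{(\mathcal{B}\cup\{H\})^{H}} $ against the factorization of $ \chi_{\mathcal{B}} $ cannot yield it, since that identity only pins down $ |(\mathcal{B}\cup\{H\})^{H}| $ through lower-order coefficients that freeness does not control in the way you need; the inequality genuinely lives at the level of the derivation module. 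The paper proves it in a footnote via \cite[Lemma 4.39 and Proposition 4.41]{OT92}: there is a homogeneous polynomial $ B $ with $ \deg B = |\mathcal{B}| - |(\mathcal{B}\cup\{H\})^{H}| $ and $ D(\mathcal{B})\alpha_{H} \subseteq (\alpha_{H}, B) $, and if $ \deg B $ exceeded the maximal exponent then $ D(\mathcal{B})\alpha_{H} \subseteq (\alpha_{H}) $ would force $ D(\mathcal{B}) = D(\mathcal{B}\cup\{H\}) $ and hence $ |\mathcal{B}| = |\mathcal{B}\cup\{H\}| $, a contradiction. Your fallback of isolating this as a preliminary lemma or citing it from the literature is the right move; just be aware that the characteristic-polynomial route you sketch is a dead end.
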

\begin{proof}
If $ \A = \varnothing $, then the statement is clear. Suppose $ \A \ne \varnothing $. 
We only need to show the ``if" part, namely, the existence of a disjoint union of  possibly-empty subsets $ \pi_{1}, \dots, \pi_{n} $ of $ \mathcal{A} $ satisfying (MP\ref{definition MAT partition 1}), (MP\ref{definition MAT partition 2}), and (MP\ref{definition MAT partition 3}) implies the existence of an MAT-partition of $\A$.

First we show that $ \pi_{1} \neq \varnothing $. 
Suppose to the contrary that $ \pi_{1} = \varnothing $. 
Thus $n\ge2$.
Let $ k \geq 2 $ be the minimal integer such that $ \pi_{k} \neq \varnothing $. 
Then (MP\ref{definition MAT partition 3}) yields $ 0 = |\mathcal{A}_{k-1}| \geq k-1 \geq 1 $, a contradiction. 
Thus $ \pi_{1} \neq \varnothing $. 

Let $p:= \max \{ 1 \le i\le n \mid \pi_i \ne\varnothing \}$.
We will show that  $\pi_{k} \neq \varnothing $ for all $ k \in [p] $ which in turn implies that $ (\pi_{1}, \dots, \pi_{p}) $ is an MAT-partition of $ \mathcal{A} $. 
Suppose to the contrary that there exists   $2\le  k< p$ such that $ \pi_{k} = \varnothing $ and choose minimal such $k$.
Set $ \mathcal{B}^{\prime} \coloneqq \pi_{1} \cup \dots \cup \pi_{k-1} $. 
By definition, $ \mathcal{B}^{\prime} $ is MAT-free with MAT-partition $ (\pi_{1}, \dots, \pi_{k-1}) $. 
Also, the maximal exponents of $ \mathcal{B}^{\prime} $ are equal to $ k-1 $. 
Let $q:= \min \{k<i \le p\mid \pi_i \ne\varnothing \}$.
Since $ \pi_{q} \neq \varnothing $, we can take $ H \in  \pi_{q}$ and write $ \mathcal{B} \coloneqq \mathcal{B}^{\prime} \cup \{H\} $. 
It is a known fact\footnote{Here is the precise statement: ``Let  $ \mathcal{A} $ be an arrangement and let $ H \in \mathcal{A} $. If $ \mathcal{A}^{\prime} \coloneqq \mathcal{A}\setminus\{H\} $ is free with maximal exponent $ m $, then $ |\mathcal{A}^{\prime}| - |\mathcal{A}^{H}| \leq m $." 
We believe that this fact is well known among experts, but we give here a short proof for the sake of completeness. 
There exists a polynomial $ B $ such that $ \deg B = |\mathcal{A}^{\prime}| - |\mathcal{A}^{H}| $ and $ D(\mathcal{A}^{\prime})\alpha_{H} $ is contained in the ideal $(\alpha_{H}, B) \subseteq S$ \cite[Lemma 4.39 and Proposition 4.41]{OT92}. 
If $ \deg B > m $, then $ D(\mathcal{A}^{\prime})\alpha_{H} \subseteq (\alpha_{H}) $, hence $ D(\mathcal{A}^{\prime}) = D(\mathcal{A}) $. 
Therefore $ \mathcal{A} $ is free and  $\exp (\mathcal{A} )=\exp (\mathcal{A}^{\prime} )$. 
This implies $ |\mathcal{A}^{\prime}| = |\mathcal{A}| $, a contradiction. 
Thus $ |\mathcal{A}^{\prime}| - |\mathcal{A}^{H}| = \deg B \leq m $.}
 in the theory of free arrangements that $ |\mathcal{B}^{\prime}| - |\mathcal{B}^{H}| \leq k-1$.
However, the condition (MP\ref{definition MAT partition 3}) implies $ |\mathcal{B}^{\prime}| - |\mathcal{B}^{H}| = q-1 >k-1$, a contradiction. 
This completes the proof.
\end{proof}

A pair $ (G,\lambda) $ consisting of a simple graph $ G $ and a map $ \lambda \colon E_{G} \to \mathbb{Z}_{>0} $ (called (edge-)labeling) is said to be an \textbf{edge-labeled graph}. 
Now we define a labeling of graphs which characterizes the MAT-freeness of graphic arrangements.
\begin{definition}[MAT-labelings]
\label{definition MAT-labeling}
Let $ (G,\lambda) $ be an  edge-labeled graph. 
Let $ \pi_{k} \coloneqq \lambda^{-1}(k) \subseteq E_{G}$ and $ E_{k} := \pi_{1} \sqcup \dots \sqcup \pi_{k} $ for every $ k \in \mathbb{Z}_{>0} $ and $ E_{0} \coloneqq \varnothing $. 
We say that $ \lambda $ is an \textbf{MAT-labeling} if the following conditions hold for every $ k \in \mathbb{Z}_{>0} $. 
\begin{enumerate}[(ML1)]
\item\label{definition MAT-labeling forest} $ \pi_{k} $ is a forest. 
\item\label{definition MAT-labeling closure} $ \cl(\pi_{k}) \cap E_{k-1} = \varnothing $. 
Here  $\cl(F)$ for $F \subseteq E_{G}$ denotes the \textbf{closure} of $F$ in the matroid sense. Namely,
an edge $e\in E_{G}$ is in $\cl(F)$ when the two endvertices of $e$ are connected by edges in $F$.
\item\label{definition MAT-labeling triangle} Every $ e \in \pi_{k} $ forms exactly $ k-1 $ triangles ($3$-cycles) with edges in $ E_{k-1} $. 
\end{enumerate}
\end{definition}

\begin{proposition}
\label{prop:MATF=MATL}
Let $ G  $ be a simple graph.
The graphic arrangement $ \mathcal{A}_{G} $ is MAT-free if and only if $ G $ admits an MAT-labeling. 
\end{proposition}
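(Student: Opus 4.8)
The plan is to use the tautological bijection between the hyperplanes of $\mathcal{A}_G$ and the edges of $G$, under which the edge $\{v_i,v_j\}$ corresponds to the hyperplane $\ker(x_i-x_j)$, and then to match the three arrangement conditions of Definition \ref{def:MAT-partition-free} with the three labeling conditions of Definition \ref{definition MAT-labeling} block by block. Under this bijection the block $\pi_k=\lambda^{-1}(k)$ of edges corresponds to a block of hyperplanes, and the edge set $E_{k-1}$ corresponds to the subarrangement $\mathcal{A}_{k-1}$. Since an MAT-labeling produces a decomposition into \emph{possibly-empty} blocks whereas an MAT-partition demands nonempty blocks, I would invoke Proposition \ref{no internal empty block} at the very end; it reduces the claim to showing that, for each $k$, the block $\pi_k$ satisfies (MP\ref{definition MAT partition 1})--(MP\ref{definition MAT partition 3}) if and only if it satisfies (ML\ref{definition MAT-labeling forest})--(ML\ref{definition MAT-labeling triangle}).

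First I would record the standard identification of the graphic matroid: for $F\subseteq E_G$ one has $\rank(\mathcal{A}_{G_F})=|F|$ exactly when $F$ contains no cycle, i.e. when $F$ is a forest. Applied to $F=\pi_k$ this gives the equivalence of (MP\ref{definition MAT partition 1}) and (ML\ref{definition MAT-labeling forest}). Next I would identify the flat $\bigcap_{H\in\pi_k}H$ as the subspace on which the coordinates are constant along each connected component of $G_{\pi_k}=(V_G,\pi_k)$. Consequently, for $H'=\ker(x_a-x_b)\in\mathcal{A}_{k-1}$ the containment $\bigcap_{H\in\pi_k}H\subseteq H'$ holds if and only if $v_a$ and $v_b$ lie in the same component of $G_{\pi_k}$, that is, if and only if $\{v_a,v_b\}\in\cl(\pi_k)$. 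Hence the nonexistence clause (MP\ref{definition MAT partition 2}) is precisely the assertion $\cl(\pi_k)\cap E_{k-1}=\varnothing$, which is (ML\ref{definition MAT-labeling closure}).

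The heart of the argument, and the step I expect to demand the most care, is the translation of (MP\ref{definition MAT partition 3}). For $H=\ker(x_a-x_b)$ corresponding to $e=\{v_a,v_b\}\in\pi_k$, I would first note that in type $A$ no hyperplane $\ker(x_i-x_j)$ with $\{v_i,v_j\}\in E_{k-1}$ contains $H$ (that would force $\{i,j\}=\{a,b\}$, impossible since $e\notin E_{k-1}$), so $(\mathcal{A}_{k-1}\cup\{H\})^H=\{K\cap H\mid K\in\mathcal{A}_{k-1}\}$. Restricting to $H$ amounts to imposing $x_a=x_b$, i.e. to contracting the edge $e$ and passing to the underlying simple graph. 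Two edges of $E_{k-1}$ then restrict to the same hyperplane precisely when they have the form $\{v_a,v_c\}$ and $\{v_b,v_c\}$ with $v_c$ a common neighbor of $v_a$ and $v_b$ in $G_{E_{k-1}}$; and such a common neighbor $v_c$ is exactly a triangle that $e$ forms with two edges of $E_{k-1}$. Since distinct edges of $E_{k-1}$ coincide in no other way, the deficiency $|\mathcal{A}_{k-1}|-|(\mathcal{A}_{k-1}\cup\{H\})^H|$ equals the number of such triangles, so (MP\ref{definition MAT partition 3}) (deficiency $=k-1$) is equivalent to (ML\ref{definition MAT-labeling triangle}).

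Combining the three block-wise equivalences and applying Proposition \ref{no internal empty block} then gives the desired equivalence between MAT-freeness of $\mathcal{A}_G$ and the existence of an MAT-labeling of $G$. The only genuinely delicate point is the coincidence count underlying (MP\ref{definition MAT partition 3}); once the flat $\bigcap_{H\in\pi_k}H$ is read off as the partition of $V_G$ into the components of $G_{\pi_k}$, the verifications for (MP\ref{definition MAT partition 1}) and (MP\ref{definition MAT partition 2}) are immediate.
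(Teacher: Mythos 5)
Your proposal is correct and is essentially the paper's own proof: the paper simply asserts that Proposition \ref{prop:MATF=MATL} is a translation of Definition \ref{def:MAT-partition-free} into graphical terms via Proposition \ref{no internal empty block}, and you carry out exactly that translation, correctly matching (MP1)--(MP3) with (ML1)--(ML3) through the graphic matroid, the component description of the flat $\bigcap_{H\in\pi_k}H$, and the triangle count for the restriction deficiency. Your write-up supplies the details the paper leaves implicit, but the route is the same.
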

\begin{proof}
This is a translation of Definition \ref{def:MAT-partition-free} into graphical terms with the use of Proposition \ref{no internal empty block}. 
\end{proof}

Thus characterizing the MAT-freeness of graphic arrangements amounts to characterizing the graphs having MAT-labelings. 
Here are first and simple facts on MAT-labelings.
Denote by $ K_{\ell} $ the complete graph on $ \ell $ vertices. 
\begin{proposition}\label{MAT-labeling on complete graph}
If $ \lambda $ is an MAT-labeling of $ K_{\ell} $, then $ |\pi_{k}| = \ell-k $ for all $ k \in [\ell-1] $. 
\end{proposition}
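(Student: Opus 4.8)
The plan is to determine the block sizes indirectly, by reading the multiset $\{|\pi_1|,\dots,|\pi_n|\}$ off the (already known) exponents of $\mathcal{A}_{K_\ell}$ via Corollary \ref{cor:MAT}, and then to fix the order of the blocks using the monotonicity built into the MAT. Two inputs drive the argument. First, $\mathcal{A}_{K_\ell}$ is precisely the type $A_{\ell-1}$ Weyl (braid) arrangement, which is free with $\exp(\mathcal{A}_{K_\ell})=\{0,1,\dots,\ell-1\}$, each value occurring once; this also follows from Theorem \ref{thm:free-chordal} together with any perfect elimination ordering of $K_\ell$, along which the numbers of preceding neighbors are $0,1,\dots,\ell-1$. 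Second, by Proposition \ref{prop:MATF=MATL} the MAT-labeling $\lambda$ yields an MAT-partition of $\mathcal{A}_{K_\ell}$; by Proposition \ref{no internal empty block} the nonempty blocks occur without internal gaps, so they are $\pi_1,\dots,\pi_n$ with $\pi_k=\lambda^{-1}(k)$, and Corollary \ref{cor:MAT}(2) expresses the exponents through the block sizes by $d_i=|\{k:|\pi_k|\ge \ell-i+1\}|$.

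Next I would extract the multiset of block sizes. Since the sorted exponents are $d_i=i-1$, taking successive differences in the formula above gives $|\{k:|\pi_k|=m\}|=d_{\ell-m+1}-d_{\ell-m}=1$ for every $m\in[\ell-1]$, while $d_1=0$ shows no block has size $\ge\ell$ (consistent with (ML\ref{definition MAT-labeling forest}), each block being a forest on $\ell$ vertices). Hence there are exactly $n=\ell-1$ nonempty blocks and the multiset $\{|\pi_1|,\dots,|\pi_{\ell-1}|\}$ equals $\{1,2,\dots,\ell-1\}$, each value occurring once.

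It remains to decide which label carries which size, and this is the step I expect to be the crux. Here I would use that the block sizes of an MAT-partition are non-increasing: viewing the passage from $\mathcal{A}_{k-1}$ to $\mathcal{A}_k=\mathcal{A}_{k-1}\sqcup\pi_k$ as an MAT-step (Corollary \ref{cor:MAT}(1) and Theorem \ref{thm:MAT}), the number $q=|\pi_k|$ of added hyperplanes is at most the number $p$ of maximal exponents of $\mathcal{A}_{k-1}$; and since the previous MAT-step adding $\pi_{k-1}$ creates a strictly larger top exponent of multiplicity exactly $|\pi_{k-1}|$, one has $p=|\pi_{k-1}|$, whence $|\pi_k|\le|\pi_{k-1}|$. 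Equivalently, iterating Theorem \ref{thm:MAT} from $\varnothing_\ell$ gives $\exp(\mathcal{A}_{K_\ell})=0^{\ell-|\pi_1|}1^{|\pi_1|-|\pi_2|}\cdots n^{|\pi_n|}$, which forces $|\pi_1|\ge\cdots\ge|\pi_n|$. Because $\{1,\dots,\ell-1\}$ admits a unique non-increasing arrangement, this pins down $|\pi_k|=\ell-k$ for all $k\in[\ell-1]$, completing the proof. The only delicate point is this monotonicity of block sizes; the passage from the exponents to the multiset of sizes is routine bookkeeping with the formula of Corollary \ref{cor:MAT}(2).
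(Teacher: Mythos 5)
Your proof is correct and follows essentially the same route as the paper, which simply notes that $\mathcal{A}_{K_\ell}$ is the braid arrangement with $\exp(\mathcal{A}_{K_\ell})=\{0,1,\dots,\ell-1\}$ and then invokes Corollary \ref{cor:MAT}. The one point you treat more carefully than the paper is the monotonicity $|\pi_1|\ge\cdots\ge|\pi_n|$ needed to pass from the multiset of block sizes to the assignment $|\pi_k|=\ell-k$; this is indeed required (the dual-partition formula alone only determines the multiset), and your derivation of it from $q\le p$ in Theorem \ref{thm:MAT} is the standard fact the paper leaves implicit in its citation of Corollary \ref{cor:MAT}.
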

\begin{proof}
The graphic arrangement $ \mathcal{A}_{K_{\ell}} $ (in $\R^\ell$) is precisely the Weyl arrangement of type $A_{\ell-1}$ (also known as the \textbf{braid arrangement}) which has exponents $ \{0,1,2, \dots, \ell-1\}$.
Corollary \ref{cor:MAT} completes the proof.
\end{proof}

\begin{remark}
\label{rem:complete}
In fact, $ K_{\ell} $ always has an MAT-labeling $\lambda$. 
We can see this from Theorem \ref{thm:ideal-free} as $ K_{\ell} $ corresponds to a positive system (in particular, an ideal) of a root system of type $A$. 
More precisely,  $ \lambda \colon E_{K_{\ell}} \to \mathbb{Z}_{>0} $ is given by $ \lambda(\{v_{i}, v_{j}\}) = j-i$ for $1\le i < j \le \ell$ according to the height\footnote{The \textbf{height} of a positive root $\beta = \sum_{\alpha\in \Delta} c_\alpha \alpha \in \Phi^+$ is defined by $\sum_{\alpha\in \Delta} c_\alpha$.} 
 of positive roots. 
 Also, one can check directly that this labeling satisfies (ML\ref{definition MAT-labeling forest}), (ML\ref{definition MAT-labeling closure}) and (ML\ref{definition MAT-labeling triangle}).
We will see a different\footnote{Two edge-labeled graphs $(G_1,\lambda_1)$ and $(G_2, \lambda_2)$ are \textbf{isomorphic} if there exists a bijection $\sigma :V_{G_1} \to V_{G_2}$ such that $\{v_{i}, v_{j}\} \in E_{G_1}$ if and only if $\{\sigma(v_{i}), \sigma(v_{j})\} \in E_{G_2}$   with $ \lambda_1(\{v_{i}, v_{j}\}) =  \lambda_2(\{\sigma(v_{i}), \sigma(v_{j})\})$. If $G_1=G_2=G$, then we say that two labelings $\lambda_1$ and $ \lambda_2$ are the same (or isomorphic) if $(G,\lambda_1)$ and $(G, \lambda_2)$ are isomorphic.}  
(or nonisomorphic) labeling of $ K_{\ell} $ in Lemma \ref{MAT-L-PEO of complete graph}(\ref{MAT-L-PEO of complete graph 2}) (see also Figure \ref{fig:MAT-K4}). 

\end{remark}

It is important to know whether or not a restriction of an MAT-labeling is also an MAT-labeling. 
The proposition below states that it is enough to check the third condition.
\begin{proposition}\label{restriction to subset}
Let $ \lambda $ be an MAT-labeling of a simple graph $ G $ and $ F \subseteq E_{G} $. 
Then the restriction $ \lambda|_{F} $ is an MAT-labeling of the subgraph $ G_{F}= (V_{G}, F) $ if and only if $ \lambda|_{F} $ satisfies (ML\ref{definition MAT-labeling triangle}). 
\end{proposition}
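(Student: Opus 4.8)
The plan is to observe that the ``only if'' direction is immediate: any MAT-labeling satisfies all three defining conditions, in particular (ML\ref{definition MAT-labeling triangle}). The entire content therefore lies in the ``if'' direction, where, assuming that $\lambda|_{F}$ satisfies (ML\ref{definition MAT-labeling triangle}), I would show that the remaining two conditions (ML\ref{definition MAT-labeling forest}) and (ML\ref{definition MAT-labeling closure}) are inherited automatically from the fact that $\lambda$ is an MAT-labeling of $G$. Throughout I would set $\pi_{k}^{F} := (\lambda|_{F})^{-1}(k) = \pi_{k} \cap F$ and $E_{k}^{F} := \pi_{1}^{F} \sqcup \dots \sqcup \pi_{k}^{F} = E_{k} \cap F$ for the blocks and filtration attached to the restricted labeling, so that the whole argument reduces to monotonicity statements in $F$.

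For (ML\ref{definition MAT-labeling forest}), the key point is that $\pi_{k}^{F} = \pi_{k} \cap F \subseteq \pi_{k}$, and since $\pi_{k}$ is a forest (because $\lambda$ is an MAT-labeling), every subset of $\pi_{k}$ is acyclic; hence $\pi_{k}^{F}$ is a forest. For (ML\ref{definition MAT-labeling closure}), I would exploit monotonicity of the graphic closure: if the two endvertices of an edge $e \in F$ are connected by edges in $\pi_{k}^{F}$, then they are a fortiori connected by edges in $\pi_{k} \supseteq \pi_{k}^{F}$, so $\cl(\pi_{k}^{F}) \subseteq \cl(\pi_{k}) \cap F$, where the left-hand closure is computed in $G_{F}$ and the right-hand one in $G$. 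Intersecting with $E_{k-1}^{F} = E_{k-1} \cap F$ then yields
\[
\cl(\pi_{k}^{F}) \cap E_{k-1}^{F} \subseteq \big(\cl(\pi_{k}) \cap F\big) \cap \big(E_{k-1} \cap F\big) \subseteq \cl(\pi_{k}) \cap E_{k-1} = \varnothing,
\]
the last equality being precisely (ML\ref{definition MAT-labeling closure}) for $\lambda$ on $G$. Hence $\lambda|_{F}$ satisfies (ML\ref{definition MAT-labeling closure}).

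Combining these two observations with the standing hypothesis that $\lambda|_{F}$ satisfies (ML\ref{definition MAT-labeling triangle}) shows that $\lambda|_{F}$ is an MAT-labeling of $G_{F}$, which completes the ``if'' direction. I do not expect a genuine obstacle: the only delicate point is that the matroid closure appearing in (ML\ref{definition MAT-labeling closure}) is, for the restriction, computed inside a different ambient graph $G_{F}$ rather than $G$. The purely combinatorial description of $\cl$ in terms of connectivity by edges makes the containment $\cl(\pi_{k}^{F}) \subseteq \cl(\pi_{k}) \cap F$ transparent, so no real subtlety survives, and it is indeed only (ML\ref{definition MAT-labeling triangle}) — the triangle-counting condition, which is sensitive to deletions of edges — that must be verified separately.
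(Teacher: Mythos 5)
Your argument is correct and is essentially the paper's own proof: both directions reduce to the observations that $\pi_{k}\cap F\subseteq\pi_{k}$ preserves acyclicity and that $\cl_{G_{F}}(\pi_{k}\cap F)\cap(E_{k-1}\cap F)\subseteq\cl_{G}(\pi_{k})\cap E_{k-1}=\varnothing$ by monotonicity of the graphic closure. No substantive difference from the paper's proof.
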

\begin{proof}
Since $ (\lambda|_{F})^{-1}(k) = \pi_{k} \cap F \subseteq \pi_{k} $ and $ \pi_{k} $ is a forest, $ (\lambda|_{F})^{-1}(k) $ is also a forest. 
Moreover, $ \cl_{G_{F}}(\pi_{k} \cap F) \cap (E_{k-1} \cap F) \subseteq \cl_{G}(\pi_{k}) \cap E_{k-1} = \varnothing $. 
Thus the conditions (ML\ref{definition MAT-labeling forest}) and (ML\ref{definition MAT-labeling closure}) are automatically satisfied. 
\end{proof}

\begin{lemma}\label{restriction to the union}
Let $ \lambda $ be an MAT-labeling of a simple graph $ G $ and $ F_{1}, F_{2} \subseteq E_{G} $. 
If $ \lambda|_{F_{1}} $ and $ \lambda|_{F_{2}} $ are MAT-labelings, then $ \lambda|_{F_{1} \cup F_{2}} $ is an MAT-labeling. 
\end{lemma}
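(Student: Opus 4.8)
The plan is to reduce everything to the third labeling condition via Proposition \ref{restriction to subset}. Since $\lambda|_{F_1 \cup F_2}$ is the restriction of the MAT-labeling $\lambda$ to the edge subset $F_1 \cup F_2$, that proposition tells us it suffices to verify that $\lambda|_{F_1 \cup F_2}$ satisfies (ML\ref{definition MAT-labeling triangle}); the forest condition (ML\ref{definition MAT-labeling forest}) and the closure condition (ML\ref{definition MAT-labeling closure}) are then automatic. So the entire proof amounts to a single triangle count.

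To check (ML\ref{definition MAT-labeling triangle}) for $\lambda|_{F_1 \cup F_2}$, I would fix $k \in \mathbb{Z}_{>0}$ and an edge $e \in \pi_k \cap (F_1 \cup F_2)$. For a subset $F \subseteq E_G$, write $T_F(e)$ for the set of triangles containing $e$ whose two remaining edges both lie in $E_{k-1} \cap F$; the quantity we must pin down is $|T_{F_1 \cup F_2}(e)|$, and the goal is to show it equals $k-1$. The decisive observation is that $e$ lies in $F_1$ or in $F_2$; by the symmetry of the two hypotheses we may assume $e \in F_1$. Because $F_1 \subseteq F_1 \cup F_2 \subseteq E_G$, monotonicity of triangle counts under edge inclusion yields $T_{F_1}(e) \subseteq T_{F_1 \cup F_2}(e) \subseteq T_{E_G}(e)$, and hence $|T_{F_1}(e)| \le |T_{F_1 \cup F_2}(e)| \le |T_{E_G}(e)|$. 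Now I would squeeze: since $\lambda|_{F_1}$ is an MAT-labeling and $e \in \pi_k \cap F_1$, condition (ML\ref{definition MAT-labeling triangle}) for $\lambda|_{F_1}$ gives the lower bound $|T_{F_1}(e)| = k-1$, while since $\lambda$ is an MAT-labeling of $G$ and $e \in \pi_k$, condition (ML\ref{definition MAT-labeling triangle}) for $\lambda$ gives the upper bound $|T_{E_G}(e)| = k-1$. The sandwich forces $|T_{F_1 \cup F_2}(e)| = k-1$, which is exactly what (ML\ref{definition MAT-labeling triangle}) demands of $\lambda|_{F_1 \cup F_2}$ at $e$. As $e$ and $k$ were arbitrary, Proposition \ref{restriction to subset} then completes the proof.

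I do not expect a genuine obstacle here: the argument is a two-sided sandwich built on the evident monotonicity of triangle counts under edge inclusion. The only point deserving care is the observation that an edge of $F_1 \cup F_2$ must belong to at least one of $F_1$, $F_2$ — it is precisely this that allows a restriction hypothesis to supply the lower bound, and it is the reason both hypotheses (not just one) are needed, since edges lying only in $F_2$ force us to invoke $\lambda|_{F_2}$ in place of $\lambda|_{F_1}$.
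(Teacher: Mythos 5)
Your proposal is correct and follows essentially the same route as the paper: reduce to (ML3) via Proposition \ref{restriction to subset}, assume without loss of generality $e \in F_1$, and sandwich the triangle count between the lower bound from $\lambda|_{F_1}$ and the upper bound from $\lambda$ on $G$. The explicit monotonicity of triangle counts you spell out is exactly the ``at least/at most'' argument in the paper's proof.
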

\begin{proof}
For every subset $ F \subseteq E_{G} $ and $ k \in \mathbb{Z}_{>0} $, let $ \pi_{k}^{F} \coloneqq \lambda|_{F}^{-1}(k) = \pi_{k} \cap F $.
By Proposition  \ref{restriction to subset}, it suffices to prove that $ \lambda|_{F_{1} \cup F_{2}} $ satisfies (ML\ref{definition MAT-labeling triangle}). 

Let $ e \in \pi_{k}^{F_{1}\cup F_{2}} = \pi_{k}^{F_{1}} \cup \pi_{k}^{F_{2}} $. 
Without loss of generality, we may assume $ e \in \pi_{k}^{F_{1}} $. 
Since $ \lambda|_{F_{1}} $ is an MAT-labeling, $ e $ forms at least $ k-1 $ triangles with edges in $ E_{k-1} \cap (F_{1} \cup F_{2}) $. 
Moreover, since $ \lambda $ is an MAT-labeling, $ e $ forms at most $ k-1 $ triangles with edges in $ E_{k-1} \cap (F_{1} \cup F_{2}) $. 
Therefore $ e $ forms exactly $ k-1 $ triangles with edges in $ E_{k-1} \cap (F_{1} \cup F_{2}) $. 
\end{proof}
%******************************************************************************** 
\subsection{Proof of the implication ``MAT-free $\Rightarrow$ strongly chordal"}
\label{subsec:MAT-free implies strongly chordal}
\quad
In this subsection we prove the ``only if" part of our main Theorem \ref{thm:MAT-free-strong-chordal} (MAT-freeness implies strong chordality).

First we need a few preliminary results.
Let $ G = (V_{G}, E_{G}) $ be a simple graph. 
Let $ \chi_G(t)$ be the \textbf{chromatic polynomial} of $G$ (the polynomial that counts the number of proper vertex colorings of $G$). 
It is known that $ \chi_G(t) = \chi_{ \mathcal{A}_{G} }(t) $ (e.g., \cite[Theorem 2.88]{OT92}).
Let $ \omega(G) $ denote the \textbf{clique number}, the number of vertices in a largest clique of $ G $. 
Recall that $\mcK(G)$ denotes the set of all maximal cliques of $G$.

\begin{proposition}\label{the number of maximal exponents = the number of largest cliques}
Let $ G $ be a chordal graph. Then the following statements hold. 
\begin{enumerate}[(1)]
\item The maximal exponents of $ \mathcal{A}_{G} $ are equal to $ \omega(G)-1 $. 
In addition, the number of maximal exponents of $ \mathcal{A}_{G} $ equals the number of largest cliques of $ G $.  
\item  If $ \lambda $ is an MAT-labeling of  $ G $, then the endvertices of each $ e \in  \pi_{n}$ where $n = \omega(G)-1$ are contained in a unique maximal clique of $G$. 
Furthermore, the map $ \phi : \pi_{n} \to \mcK(G)$ defined by $\phi(e)=$ the maximal clique containing the endvertices of $ e$ induces a bijection $ \pi_{n} \simeq \phi(\pi_{n})$ and $\phi(\pi_{n})=\{\mbox{all largest cliques of $ G $}\}$. 
\end{enumerate}
 
\end{proposition}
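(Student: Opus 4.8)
For part (1), the plan is to start from the identity $\chi_G(t)=\chi_{\A_G}(t)$ together with chordality $\Rightarrow$ freeness (Theorem \ref{thm:free-chordal}) and the Factorization Theorem \ref{thm:Factorization}, so that $\exp(\A_G)$ is exactly the multiset of roots of the chromatic polynomial. Fixing a PEO $(v_1,\dots,v_\ell)$ (Theorem \ref{thm:chordal-PEO}) and peeling off simplicial vertices, the chromatic polynomial factors as $\prod_{i=1}^{\ell}(t-b_i)$, where $b_i=|N_G(v_i)\cap\{v_1,\dots,v_{i-1}\}|$ is the back-degree; hence the exponents are $\{b_1,\dots,b_\ell\}$. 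Since $\{v_i\}\cup(N_G(v_i)\cap\{v_1,\dots,v_{i-1}\})$ is a clique of size $b_i+1$, and conversely every maximal clique $C$ arises this way from its largest-indexed vertex, I obtain $\max_i b_i=\omega(G)-1$ together with a bijection between $\{i : b_i=\omega(G)-1\}$ and the set of largest cliques (send $i$ to that clique; invert by taking the largest-indexed vertex). Both assertions of (1) follow.

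For part (2) I would first pin down $|\pi_n|$. Combining (1) with Corollary \ref{cor:MAT} (equivalently, reading off the last MAT-step), the maximal exponent of $\A_G$ equals $n=\omega(G)-1$ with multiplicity $|\pi_n|$, and by (1) this multiplicity is the number $L$ of largest cliques; hence $|\pi_n|=L$. I then reduce all of (2) to a single claim: for each $e=\{u,w\}\in\pi_n$, the common neighborhood $N_G(u)\cap N_G(w)$ is a clique. Granting this, $C_e:=\{u,w\}\cup(N_G(u)\cap N_G(w))$ is a clique; since (ML3) supplies exactly $n-1$ common neighbors joined to $u,w$ by $E_{n-1}$-edges and $\omega(G)=n+1$ bounds the size, one gets $|C_e|=n+1$, so $C_e$ is a largest clique, and it is the unique maximal clique through $e$ (any maximal clique containing $u,w$ lies in $\{u,w\}\cup(N_G(u)\cap N_G(w))$). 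This defines $\phi$ and lands it in the largest cliques.

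The crux, which I expect to be the main obstacle, is the clique claim for $N_G(u)\cap N_G(w)$; note that chordality of $G$ alone is insufficient, since $K_4$ minus an edge is chordal yet has non-adjacent common neighbors. The device I would use is the auxiliary graph $G':=G_{E_{n-1}}$: by Corollary \ref{cor:MAT}(1) and Proposition \ref{prop:MATF=MATL} it carries the MAT-labeling $\lambda|_{E_{n-1}}$, hence is MAT-free, free, and chordal (Theorem \ref{thm:free-chordal}), with $\omega(G')=n$ by part (1). Two observations finish it. First, no common neighbor of $u,w$ is attached through a $\pi_n$-edge: if, say, $uy\in\pi_n$ for a common neighbor $y$, then $uw,uy\in\pi_n$ connect $w$ and $y$ within $\pi_n$ while $wy\in E_{n-1}$, contradicting the closure condition (ML2); the symmetric case and the ``both in $\pi_n$'' case are excluded likewise (the latter by the forest condition (ML1)). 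Thus $N_G(u)\cap N_G(w)$ equals the set $Z$ of $E_{n-1}$-common neighbors. Second, $Z$ is a clique: in $G'$ the vertices $u,w$ are non-adjacent but share every $z\in Z$ as a neighbor, so two non-adjacent $z,z'\in Z$ would create an induced $4$-cycle $u-z-w-z'-u$ in $G'$, impossible by chordality of $G'$.

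Finally I would close the bijection by surjectivity plus counting. Every largest clique $C$ contains a $\pi_n$-edge: otherwise all edges of $G[C]$ lie in $E_{n-1}$, making $C$ a clique of size $n+1$ in $G'$, contradicting $\omega(G')=n$. Hence each largest clique is $C_e=\phi(e)$ for some $e\in\pi_n$, so $\phi$ is surjective onto the largest cliques; since $|\pi_n|=L$ equals the number of largest cliques, a surjection between finite sets of equal size is a bijection. This gives $\pi_n\simeq\phi(\pi_n)=\{\text{all largest cliques}\}$ and, as a by-product, the injectivity of $\phi$.
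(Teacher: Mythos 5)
Your proof is correct, but it takes a genuinely different route from the paper's at both stages. For part (1) the paper inducts on $|V_G|$ via a minimal vertex separator $S$: it splits $G$ into $G[A\sqcup S]$ and $G[S\sqcup B]$, uses Dirac's theorem and the factorization $\chi_G(t)=\chi_{G_1}(t)\chi_{G_2}(t)/\chi_{G[S]}(t)$, and adds the counts of largest cliques on the two sides (no clique meets both $A$ and $B$). Your route through the back-degree factorization $\chi_G(t)=\prod_i(t-b_i)$ along a PEO, together with the bijection $i\mapsto\{v_i\}\cup\bigl(N_G(v_i)\cap\{v_1,\dots,v_{i-1}\}\bigr)$ between indices of maximal back-degree and largest cliques, is shorter and avoids the separator machinery, at the cost of invoking that standard factorization (which the paper only alludes to). For part (2) the paper proves uniqueness of the maximal clique through $e$ by contradiction: if two maximal cliques $C_1,C_2$ contained the endvertices of $e$, one could build a chordless $4$-cycle in $G\setminus e$, which is chordal because $\lambda|_{E_{G\setminus e}}$ is still an MAT-labeling. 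You instead identify the unique maximal clique explicitly as $C_e=\{u,w\}\cup\bigl(N_G(u)\cap N_G(w)\bigr)$, showing via (ML\ref{definition MAT-labeling forest}) and (ML\ref{definition MAT-labeling closure}) that every common neighbor is an $E_{n-1}$-common neighbor and then ruling out a chordless $4$-cycle in the chordal graph $G_{E_{n-1}}$; this also hands you $|C_e|=n+1$ directly from (ML\ref{definition MAT-labeling triangle}), so $\phi(e)$ is seen to be a largest clique immediately rather than only after the final count. The endgame is the same in both proofs: $\omega(G_{E_{n-1}})=\omega(G)-1$ forces every largest clique to contain a $\pi_n$-edge, and the equality $|\pi_n|=\#\{\text{largest cliques}\}$ from part (1) and Corollary \ref{cor:MAT} upgrades the surjection to a bijection.
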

\begin{proof}
First we prove part (1).
When $ G = K_{\ell} $, $ \omega(G) = \ell $ and $ \chi_{G}(t) = t(t-1) \cdots (t-\ell+1) $. 
The assertions clearly hold. 

We may assume that $ \ell \geq 3 $ and $ G $ is not complete. 
We proceed by induction on $ \ell = |V_{G}| $. 
Since $ G $ is not complete, there exist two nonadjacent vertices $ a,b \in V_{G} $. 
Let $ S \subseteq V_{G} $ be a minimal $ (a,b) $-separator. 
Then $ V_{G} $ is decomposed as $ V_{G} = A \sqcup S \sqcup B $, where $ a \in A $ and $ b \in B $. 
Let $ G_{1} \coloneqq G[A\sqcup S] $ and $ G_{2} \coloneqq G[S \sqcup B] $. 
Note that $ G_{1} $ and $ G_{2} $ are chordal. 
Moreover, $ G[S] $ is complete (Theorem \ref{Dirac minimal vertex separator}) and $ \chi_{G}(t) = \chi_{G_{1}}(t)\chi_{G_{2}}(t)/\chi_{G[S]}(t) $ (e.g., \cite[Theorem 3]{Read68}). 

Let $ m_{1} $ and $ m_{2} $ denote the numbers of cliques consisting of $ \omega(G) $ many vertices in $ G_{1} $ and $ G_{2} $, respectively. 
Since there is no clique of $ G $ containing both vertices in $ A $ and in $ B $, the number of largest cliques of $ G $ equals $ m_{1} $ + $ m_{2} $. 
By the induction hypothesis, the chromatic polynomials of $ G_{1} $ and $ G_{2} $ can be expressed as $ \chi_{G_{1}}(t)  = (t-\omega(G)+1)^{m_{1}} f(t) $ and $ \chi_{G_{2}}(t) = (t-\omega(G)+1)^{m_{2}}g(t) $, where $ f(t), g(t) \in \mathbb{Z}[t] $ are the products of some linear factors with roots strictly smaller than $ \omega(G) -1$. 
Therefore $ \chi_{G}(t) = (t-\omega(G)+1)^{m_{1}+m_{2}}f(t)g(t)/\chi_{G[S]}(t) $. 
Since $ \chi_{G[S]}(t) = t(t-1) \cdots (t-|S|+1) $ and $ |S| < \omega(G) $ (Remark \ref{Ho and Lee}), the maximal exponents of $ G $ are equal to $ \omega(G)-1 $  and the number of maximal exponents of $ G $ is $ m_{1} + m_{2} $. 

Now we prove part (2). If $ n = 0 $, the assertions hold trivially. 
If $ n = 1 $, then $ G $ is a forest and $ \lambda $ is a constant labeling whose value is $ 1 $. 
Hence the assertions also hold. 

Now suppose $ n \geq 2 $. 
Let $ e \in \pi_{n} $. 
Clearly, there exists $ C\in \mcK(G)$ such that $e \in G[C]$.
Suppose that there exist two distinct $ C_{1} ,C_{2} \in \mcK(G)$ such that $ e\in G[C_{1} \cap C_{2}] $. 
Since $ \lambda_{E_{G \setminus e}} $ is an MAT-labeling, $ \mathcal{A}_{G\setminus e} $ is free and hence $ G\setminus e $ is chordal. 
By the maximality of $ C_{1} ,C_{2}$, there exist $u\in C_{1}\setminus C_{2} $ and $v\in C_{2}\setminus C_{1} $ such that $\{u,v\} \notin E_G$.
Then $u,v$ and the endvertices of $ e$ form a $4$-cycle  which is chordless in $ G\setminus e $, a contradiction. 
Thus the endvertices of each edge in $ \pi_{n} $ are contained in exactly one maximal clique of $ G $.
Therefore the map $\phi$ is well-defined. Moreover, $ |\phi(\pi_{n})| \leq  |\pi_{n}|  $. 

Let $ G^{\prime} \coloneqq G\setminus \pi_{n} $. 
The restriction $ \lambda|_{E_{G^{\prime}}} $ is an MAT-labeling of $G'$ hence $ \mathcal{A}_{G^{\prime}} $ is MAT-free. 
Therefore the maximal exponents of $ \mathcal{A}_{G^{\prime}} $ are equal to $ n-1 $.
By part (1), $ \omega(G^{\prime}) = \omega(G)-1 $. 
Thus every largest clique of $ G $ contains the endvertices of at least one edge in $ \pi_{n} $. 
Hence every largest clique of $ G $ belongs to $ \phi(\pi_{n}) $ and $ |\phi(\pi_{n})| \geq  |\pi_{n}|  $. 
Thus $ \phi(\pi_{n}) $ is precisely the set consisting of the largest cliques. 
This completes the proof. 
\end{proof}

In general, a restriction of an MAT-labeling is not an MAT-labeling (e.g., when we restrict to any edge in $\pi_k$ with $k>1$). 
We show below that it is the case for restriction to certain subset. 
Recall that  $ \mathcal{P}_{G} $ denotes the clique intersection poset of a chordal graph $ G $ (\S \ref{sec:more}). 
\begin{lemma}\label{restriction to intersection of maximal cliques}
If $ \lambda $ is an MAT-labeling of a chordal graph $ G $, then the restriction $ \lambda|_{E_{G[X]}} $ is an MAT-labeling of the subgraph $ (V_{G}, E_{G[X]}) $ for any node $ X \in \mathcal{P}_{G} $. 
\end{lemma}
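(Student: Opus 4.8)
The plan is to invoke Proposition \ref{restriction to subset}, which reduces the statement to checking that $\lambda|_{E_{G[X]}}$ satisfies (ML\ref{definition MAT-labeling triangle}). Since $X$ is an intersection of cliques it is itself a clique, so $G[X]$ is complete; thus for an edge $e=\{u,v\}\in\pi_k$ with $u,v\in X$, a triangle of $G[X]$ on $e$ with both other sides in $E_{k-1}$ is exactly a triangle $\{u,v,w\}$ with $w\in X$ and $\lambda(\{u,w\}),\lambda(\{v,w\})<k$. Writing $T(e)$ for the set of such apexes $w$ in the ambient graph $G$ (so $|T(e)|=k-1$ by (ML\ref{definition MAT-labeling triangle}) for $\lambda$), the requirement (ML\ref{definition MAT-labeling triangle}) for the restriction is precisely $T(e)\subseteq X$. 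Now a node satisfies $X=\bigcap\{C\in\mcK(G):X\subseteq C\}$, and every such $C$ contains $e$; hence $T(e)\subseteq X$ follows once we know the single statement $T(e)\subseteq C$ for every maximal clique $C$ and every edge $e\subseteq C$. So it suffices to prove that the restriction of $\lambda$ to each maximal clique is an MAT-labeling.

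To prove $T(e)\subseteq C$, I argue by contradiction and set up a ``diamond''. Suppose $w\in T(e)\setminus C$ for $e=\{u,v\}\subseteq C$ with $\lambda(e)=k$. Since $\{u,w\},\{v,w\}\in E_G$ but $w\notin C$, maximality of $C$ yields some $c\in C$ with $\{w,c\}\notin E_G$; then $\{u,v,w,c\}$ induces $K_4$ minus the edge $\{w,c\}$, with $\lambda(\{u,w\}),\lambda(\{v,w\})<k$. I then induct on the maximal label $n$ of $\lambda$ (equivalently $n=\omega(G)-1$ by Proposition \ref{the number of maximal exponents = the number of largest cliques}(1)). If $k=n$, then $e\in\pi_n$, and since $e$ has maximal label $\lambda|_{E_G\setminus\{e\}}$ still satisfies (ML\ref{definition MAT-labeling triangle}), so by Proposition \ref{restriction to subset} it is an MAT-labeling and $G\setminus e$ is chordal (Theorem \ref{thm:free-chordal}); but in $G\setminus e$ the four-cycle $u,w,v,c$ has neither chord (the chord $\{u,v\}$ was deleted and $\{w,c\}$ is absent), contradicting chordality. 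This disposes of the top case without the induction hypothesis.

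For $k<n$, the plan is to pass to $G':=G\setminus\pi_n$, an MAT-labeled chordal graph of maximal label $n-1$. Every side of a triangle counted by $T(e)$ has label $<k<n$, so $T_{G'}(e)=T_G(e)\ni w$ and $e\in E_{G'}$; it therefore suffices to produce a maximal clique $C'$ of $G'$ with $e\subseteq C'$ and $w\notin C'$, for then the induction hypothesis applied to $(G',\lambda|_{E_{G'}})$ gives $T_{G'}(e)\subseteq C'$, contradicting $w\in T_{G'}(e)\setminus C'$. Here Proposition \ref{the number of maximal exponents = the number of largest cliques}(2) is the crucial tool: its bijection forces each maximal clique of $G$ to contain at most one edge of $\pi_n$ (exactly one if it is a largest clique, none otherwise), so at most one vertex of $C$ can be ``tainted'' by carrying a label-$n$ edge to $u$ or $v$. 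If some vertex $c\in C$ with $\{w,c\}\notin E_G$ is untainted, then $\{u,c\},\{v,c\}\in E_{G'}$, so $\{u,v,c\}$ survives as a triangle in $G'$ and extends to a maximal clique $C'$ of $G'$ with $w\notin C'$. Otherwise the unique non-neighbour $c_0$ carries the unique label-$n$ edge of $C$ (forcing $C$ to be a largest clique), and then $C\setminus\{c_0\}$, all of whose internal edges have label $<n$, is a clique of $G'$ of size $\omega(G')$; it is therefore a maximal clique of $G'$ containing $u,v$ but not $w$, and serves as the required $C'$.

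I expect the main obstacle to be exactly this interference of the maximal-label edges in the passage from $G$ to $G'=G\setminus\pi_n$: a priori the triangle $\{u,v,c\}$ witnessing $w\notin C$ may be destroyed in $G'$ when one of its sides lies in $\pi_n$. The resolution rests entirely on the rigidity supplied by Proposition \ref{the number of maximal exponents = the number of largest cliques}(2)---that maximal-label edges are in bijection with the largest cliques and hence occur at most once per clique---which simultaneously bounds how many non-neighbours of $w$ can be tainted and, in the extremal case, hands us the substitute clique $C\setminus\{c_0\}$ inside $G'$.
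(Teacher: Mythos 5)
Your proof is correct, and while it runs on the same engine as the paper's (induction on $n=\omega(G)-1$, stripping off $\pi_{n}$, and Proposition~\ref{the number of maximal exponents = the number of largest cliques}), it is organized genuinely differently. Your opening reduction --- that $X=\bigcap\{C\in\mathcal{K}(G)\mid X\subseteq C\}$ and that (ML\ref{definition MAT-labeling triangle}) for $\lambda|_{E_{G[X]}}$ is precisely the containment $T(e)\subseteq X$ of the apex set, so the general case follows from the maximal-clique case --- handles arbitrary nodes more uniformly than the paper, which instead shows $E_{G[X]}\cap\pi_{n}=\varnothing$ and that $X$ survives as a node of $\mathcal{P}_{G\setminus\pi_{n}}$ before inducting. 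Inside the induction, the paper verifies the triangle count constructively by writing a largest clique $C$ as $C'\cup C''$ with $C',C''$ largest cliques of $G'=G\setminus\pi_{n}$ and locating each lower-label edge in one of them; you instead argue by contradiction with a stray apex $w\notin C$, and your tainted/untainted case split supplies the required maximal clique of $G'$ (either by extending the surviving triangle $\{u,v,c\}$, or by taking $C\setminus\{c_{0}\}$, which has size $\omega(G)-1=\omega(G')$ and is hence maximal). Your top case $k=n$ via chordlessness of $G\setminus e$ is exactly the $4$-cycle argument the paper uses inside Proposition~\ref{the number of maximal exponents = the number of largest cliques}(2). Two steps you assert without proof are standard and fillable: that $\lambda|_{E_{G'}}$ is an MAT-labeling (Corollary~\ref{cor:MAT}(1), or Proposition~\ref{restriction to subset} since no triangle counted in (ML\ref{definition MAT-labeling triangle}) involves a label-$n$ edge), and that $\omega(G')=\omega(G)-1$ (part (1) of the proposition applied to $G'$). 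Net effect: your route buys a cleaner treatment of general nodes at the price of a slightly more intricate case analysis in the inductive step.
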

\begin{proof}
We proceed by induction on $ n = \omega(G)-1 $. 
Again it is easily seen that the assertion holds true for  $ n = 0,1 $.
Now suppose $ n \geq 2 $. 
First we consider the case $X=C$ where $C$ is a largest clique of $ G $. 
Note that by Proposition \ref{restriction to subset}, it suffices to prove that $ \lambda|_{E_{G[C]}} $ satisfies (ML\ref{definition MAT-labeling triangle}). 

Let $e=\{u,v\} \in\pi_n$ be the unique edge in $\pi_n$ whose endvertices are contained in $C$ (Proposition \ref{the number of maximal exponents = the number of largest cliques}).
The clique $ C $ is the union of two largest cliques $ C^{\prime} = C \setminus  \{v\}$ and $ C^{\prime\prime} = C \setminus  \{u\} $ of $ G^{\prime} = G\setminus\pi_{n} $. 
For $ W \in \{C,C^{\prime}, C^{\prime\prime}\} $, define $ \pi_{k}^{W} \coloneqq \lambda|_{E_{G[W]}}^{-1}(k) = \pi_{k} \cap E_{G[W]} $ for $ k \in [n]$. 
Note that $ \pi_{n}^{C} = \{e\} $ and $ \pi_{k}^{C} = \pi_{k}^{C^{\prime}} \cup \pi_{k}^{C^{\prime\prime}} $ for $ k \in [n-1] $ since the restrictions $ \lambda|_{E_{G[C^{\prime}]}} $ and $ \lambda|_{E_{G[C^{\prime\prime}]}} $ are MAT-labelings by the induction hypothesis. 

To show (ML\ref{definition MAT-labeling triangle}) of $ \lambda|_{E_{G[C]}} $, it suffices to prove that every edge in $ \pi_{k}^{C} $ forms at least $ k-1 $ triangles with edges in $ E_{k-1} \cap E_{G[C]} $ because every edge in $ \pi_{k}^{C} $ forms at most $ k-1 $ triangles with edges in $ E_{k-1} \cap E_{G[C]} $ by (ML\ref{definition MAT-labeling triangle}) of $ \lambda $. 
When $ k = n $, the edge $ e \in \pi_{n}^{C} $ forms $ n-1 $ triangles with edges in $E_{G[C]}\setminus\{e\} \subseteq E_{n-1} \cap E_{G[C]} $. 
We are left with $ k \in [n-1] $. 
Let $ f \in \pi_{k}^{C} $, then $ f \in \pi_{k}^{C^{\prime}} \cup \pi_{k}^{C^{\prime\prime}} $. 
Without loss of generality, we may assume $ f \in \pi_{k}^{C^{\prime}} $. 
Then $f$ forms $ k-1 $ triangles with edges in $ E_{k-1} \cap E_{G[C^{\prime}]} \subseteq E_{k-1} \cap E_{G[C]} $. 
Therefore $ \lambda|_{E_{G[C]}} $ satisfies (ML\ref{definition MAT-labeling triangle}) and hence $ \lambda|_{E_{G[C]}} $ is an MAT-labeling. 

Now we treat the case $ X \in \mathcal{P}_{G} $ is not a largest clique of $ G $. 
First we claim that $ E_{G[X]} \cap \pi_{n} = \varnothing $. 
If not, we can find an edge $ e \in E_{G[X]} \cap \pi_{n} $. 
Let $ C $ denote the largest clique such that $ e \in E_{G[C]} $. 
By the definition of $X$, there exists a maximal clique $ D $ of $G$ such that $X\subseteq  D \neq C$. 
Thus $ e \in E_{G[C\cap D]} $. 
Take a vertex $ c \in C\setminus D $. 
By the maximality of $ D $, there exists $ d \in D $ such that $\{c,d\} \notin E_G$. 
Then we obtain a chordless $4$-cycle  in $ G^{\prime} = G\setminus \pi_{n} $ formed by $ c,d $ and the endvertices of $ e $, which contradicts to the chordality of $ G^{\prime} $. 
Thus $ E_{G[X]} \cap \pi_{n} = \varnothing $. 

Let $C$ be a largest clique of $G$ and $e=\{u,v\} \in\pi_n$ be the unique edge in $\pi_n$ whose endvertices are contained in $C$. 
 Then $ C = C^{\prime} \cup C^{\prime\prime}  $ where $ C^{\prime} = C \setminus  \{v\}$ and $ C^{\prime\prime} = C \setminus  \{u\} $ are largest cliques of $ G^{\prime} = G\setminus\pi_{n} $. 
 By the above discussion, $e \notin E_{G[X]}$ hence either $u \notin X$ or $v \notin X$. 
 Thus either $X \subseteq C''$ or $X \subseteq C'$.
Note also that every maximal clique of $ G $ which is not largest is a maximal clique of $ G^{\prime} $. 
Therefore the node $ X $ is the intersection of some maximal cliques of $ G^{\prime} $. 
Hence $ X \in \mathcal{P}_{G^{\prime}} $ and by the induction hypothesis, $ \lambda|_{E_{G[X]}} = \left(\lambda|_{E_{G^{\prime}}}\right)|_{E_{G^{\prime}[X]}} $ is an MAT-labeling.
\end{proof}

We are ready to prove the main result of this subsection.
\begin{theorem}[MAT-freeness implies strong chordality]
\label{thm:main-1} 
If a simple graph $G$ admits an MAT-labeling, then $G$ is strongly chordal.  
\end{theorem}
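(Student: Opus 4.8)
The plan is to prove the two defining properties of strong chordality separately: chordality, and the absence of an induced $n$-sun for every $n\ge 3$. Chordality is immediate, since an MAT-labeling makes $\mathcal{A}_{G}$ MAT-free by Proposition~\ref{prop:MATF=MATL}, hence free by Corollary~\ref{cor:MAT}, and Theorem~\ref{thm:free-chordal} then forces $G$ to be chordal. The entire difficulty is therefore concentrated in ruling out induced $n$-suns.

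Before attacking the suns I would record one clean local consequence of (ML\ref{definition MAT-labeling forest}) and (ML\ref{definition MAT-labeling closure}): in every triangle of $G$ the maximum label is attained by a \emph{unique} edge. Indeed, if two edges of a triangle both carried the maximal label $k$, they would form a path in $\pi_{k}$ whose closure contains the third edge; by (ML\ref{definition MAT-labeling closure}) that third edge cannot lie in $E_{k-1}$, so it also has label $k$ (it cannot exceed $k$ by maximality), making the whole triangle lie in $\pi_{k}$ and contradicting that $\pi_{k}$ is a forest by (ML\ref{definition MAT-labeling forest}). Combined with (ML\ref{definition MAT-labeling triangle}) this says exactly that each edge $e$ is the strict-maximum edge of precisely $\lambda(e)-1$ triangles of $G$, which is the counting statement I would drive the argument with.

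I would then induct on $\omega(G)$, writing $N=\omega(G)-1$ for the top label. Deleting the top block, $G'=G\setminus\pi_{N}$ again carries the restricted MAT-labeling (only maximal labels are removed, so (ML\ref{definition MAT-labeling triangle}) survives by Proposition~\ref{restriction to subset}), and $\omega(G')=\omega(G)-1$ by Proposition~\ref{the number of maximal exponents = the number of largest cliques}(1); hence $G'$ is strongly chordal by the inductive hypothesis. Suppose now, for contradiction, that $G$ contains an induced $n$-sun $S_{n}$ with hub clique $U=\{u_{1},\dots,u_{n}\}$, teeth $v_{1},\dots,v_{n}$, rim edges $d_{i}=\{u_{i},u_{i+1}\}$, spokes $e_{i}=\{u_{i},v_{i}\}$, $f_{i}=\{u_{i+1},v_{i}\}$, and tooth-triangles $T_{i}=\{u_{i},u_{i+1},v_{i}\}$. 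The crucial external input is Proposition~\ref{the number of maximal exponents = the number of largest cliques}(2): the edges of $\pi_{N}$ are in bijection (via $\phi$) with the largest cliques of $G$, each $\pi_{N}$-edge sitting inside the unique maximal clique containing its endpoints, which is largest; in particular every largest clique contains exactly one $\pi_{N}$-edge. If no edge of $S_{n}$ lies in $\pi_{N}$, then $S_{n}$ is an induced subgraph of $G'$, contradicting the strong chordality of $G'$; this settles the generic case. The case $\omega(G)=3$ already exhibits the mechanism in the remaining situation: there $n=3$ and $U$ together with all $T_{i}$ are largest cliques, so each contains exactly one $\pi_{N}$-edge; if that edge of $U$ is $d_{1}$ then $d_{2},d_{3}$ and, being the unique top edge of $T_{1}$ as well, also $e_{1},f_{1}$ are forced into $\pi_{1}$, whence the single edge $d_{1}$ is the strict maximum of both triangles $U$ and $T_{1}$, violating $\lambda(d_{1})-1=1$.

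The hard part will be the inductive step in which some edge of $S_{n}$ lies in $\pi_{N}$ while $\omega(G)$ strictly exceeds the clique number $n$ of the sun. Then the sun's cliques need no longer be largest cliques of $G$, so the bijection of Proposition~\ref{the number of maximal exponents = the number of largest cliques}(2) does not directly pin down the sun's labels, and triangles passing through vertices of $G$ outside $S_{n}$ must be controlled. Here I would lean on the uniqueness half of that proposition: a top-labeled spoke $e_{i}$ forces its unique maximal clique $C$ to be largest and to contain the whole tooth-triangle $T_{i}$, while injectivity of $\phi$ forbids the partner spoke $f_{i}$ and the rim edge $d_{i}$ from also lying in $\pi_{N}$; moreover the $N-1$ common neighbours of $u_{i}$ and $v_{i}$ are exactly the vertices of $C\setminus\{u_{i},v_{i}\}$, and (ML\ref{definition MAT-labeling triangle}) then forces each of them to be joined to both $u_{i}$ and $v_{i}$ by sub-top edges. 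The genuine obstacle, and where I expect the real work to lie, is to convert this rigidity either into a surviving induced sun inside $G'=G\setminus\pi_{N}$ (by re-routing a tooth through a vertex of $C\setminus V_{S_{n}}$) or into a direct triangle overcount as in the $\omega(G)=3$ computation, so that the induction closes.
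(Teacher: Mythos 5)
Your preliminary observations are correct and even attractive: the unique-maximum-edge property of triangles follows cleanly from (ML\ref{definition MAT-labeling forest}) and (ML\ref{definition MAT-labeling closure}), the recast of (ML\ref{definition MAT-labeling triangle}) as ``$e$ is the strict maximum of exactly $\lambda(e)-1$ triangles'' is valid, the reduction to $G'=G\setminus\pi_{N}$ when no sun edge carries the top label is sound, and your $\omega(G)=3$ computation does close that special case. But the proof is not complete, and you say so yourself: the entire situation in which some edge of the induced $n$-sun lies in $\pi_{N}$ while the sun's cliques are not all largest cliques of $G$ is left as ``where I expect the real work to lie,'' with only a sketch of local rigidity facts (which are themselves correct) and two candidate strategies, neither carried out. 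In fact the open region is slightly larger than you state: you only settle $\omega(G)=3$, so even $\omega(G)=n\ge 4$ (hub largest, tooth triangles not largest) is unresolved. Since this is exactly the heart of the theorem, the proposal has a genuine gap.

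The paper closes the argument by a different and ultimately simpler localization, which you may want to compare against your plan. Rather than inducting on $\omega(G)$ and fighting the top block $\pi_{N}$, it chooses for each tooth triangle $T_{i}$ a maximal clique $C_{i}\supseteq T_{i}$ and forms $G_{0}$ with edge set $E_{G[C_{1}]}\cup\dots\cup E_{G[C_{n}]}$; Lemma \ref{restriction to intersection of maximal cliques} (restriction to any node of $\mathcal{P}_{G}$, in particular to a maximal clique, is again an MAT-labeling) together with Lemma \ref{restriction to the union} shows $G_{0}$ inherits an MAT-labeling. For $n\ge 4$ one checks that the hub cycle $u_{1}u_{2}\cdots u_{n}$ is chordless in $G_{0}$, so $G_{0}$ is not even chordal --- contradiction. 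For $n=3$ one takes a maximal clique $K$ of $G_{0}$ containing the hub, notes $K\ne C_{i}$, and compares maximal labels: since $E_{G_{0}[K]}=\bigcup_{i}E_{G_{0}[C_{i}\cap K]}$ and restrictions to complete subgraphs are MAT-labelings with maximal label one less than the clique size, one gets $|K|=|C_{i}\cap K|$ for some $i$, contradicting $C_{i}\cap K\subsetneq K$. The key idea you are missing is this passage to $G_{0}$, which removes all dependence on whether the sun's edges meet $\pi_{N}$; without it, or a completed version of your own inductive step, the proof does not go through.
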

\begin{proof}
Suppose that $ G $ is not strongly chordal. 
Note that $G$ is chordal by Proposition \ref{prop:MATF=MATL}.
Then $ G $ contains an $ n $-sun $ S_{n} $ (Definition \ref{def:strongly-chordal}) as an induced subgraph for some $ n \geq 3 $.

Let $ Z \coloneqq \{u_{1}, \dots, u_{n}\} $ be the central clique of $ S_{n} $, $ T_{i} \coloneqq \{u_{i}, v_{i}, u_{i+1}\} $ the vertex set of the triangle around $ Z $, and $ C_{i} $ a maximal clique of $ G $ containing $ T_{i} $ for $ i \in [n]$. 
Let $ G_{0} $ be the subgraph of $ G $ with vertex set $ V_{G_{0}} \coloneqq C_{1} \cup \dots \cup C_{n} $ and edge set $ E_{G_{0}} \coloneqq E_{G[C_{1}]} \cup \dots \cup E_{G[C_{n}]} $. 
By Lemmas \ref{restriction to intersection of maximal cliques} and \ref{restriction to the union}, $ G_{0} $ admits an MAT-labeling. 

Suppose $ n \geq 4 $. 
If $ \{u_{i}, u_{j}\} $ is an edge of $ G_{0} $, then $ \{u_{i}, u_{j}\} \in E_{G[C_{k}]} $ for some $ k \in [n]$. 
Therefore  both $ u_{i} $ and $ u_{j} $ are adjacent to $ v_{k} $. 
This implies $ \{u_{i}, u_{j}, v_{k}\} = T_{k} $ and hence $ j = i \pm 1 $. 
Thus the cycle in $ G_{0} $ consisting of edges $ \{u_{1}, u_{2}\}, \dots, \{u_{n-1}, u_{n}\}, \{u_{n}, u_{1}\} $ has no chords and its length is four or more. 
Therefore $ G_{0} $ is not chordal, which is a contradiction. 

Now we consider $ n = 3 $. 
Then $ Z = \{u_{1}, u_{2}, u_{3}\} $ is a clique of $ G_{0} $. 
Let $ K $ be a maximal clique of $ G_{0} $ containing $ Z $. 
Since $ u_{i} \not\in C_{i+1} $ for each $ i \in \{1,2,3\} $  ($C_4=C_1$), the clique $ K $ is neither $ C_{1}, C_{2} $, nor $ C_{3} $. 
Let $ X_{i} \coloneqq C_{i} \cap K \subsetneq K$ for each $ i \in \{1,2,3\} $. 
Then the restrictions $ \lambda|_{E_{G_{0}[K]}} $ and $ \lambda|_{E_{G_{0}[X_{i}]}}$ are MAT-labelings by Lemma \ref{restriction to intersection of maximal cliques}, where $ \lambda $ denotes an MAT-labeling of $ G_{0} $. 
Since $ E_{G_{0}} = E_{G[C_{1}]} \cup E_{G[C_{2}]} \cup E_{G[C_{3}]} $, we have $ E_{G_{0}[K]} = E_{G_{0}[X_{1}]} \cup E_{G_{0}[X_{2}]} \cup E_{G_{0}[X_{3}]} $. 
Therefore 
\begin{align*}
\max\left\{\lambda|_{E_{G_{0}[K]}}(e) \, \middle| \,  e \in E_{G_{0}[K]}\right\}
= \max\left\{ \lambda|_{E_{G_{0}[X_{i}]}}(e) \, \middle| \, i \in \{1,2,3\}, e \in E_{G_{0}[X_{i}]}\right\}. 
\end{align*}
Hence $ |K| = |X_{i}| $ for some $ i \in \{1,2,3\} $, which is a contradiction. 
\end{proof}

\begin{corollary}
The $ n $-sun $ S_{n} $ admits no MAT-labelings. 
\end{corollary}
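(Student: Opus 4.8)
The plan is to obtain this as an immediate consequence of Theorem \ref{thm:main-1} via its contrapositive. By Proposition \ref{prop:MATF=MATL}, admitting an MAT-labeling is equivalent to the MAT-freeness of the associated graphic arrangement, and Theorem \ref{thm:main-1} asserts that any graph admitting an MAT-labeling must be strongly chordal. Hence it suffices to verify that the $n$-sun $S_{n}$ fails to be strongly chordal, and the conclusion follows at once.

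This verification is immediate from Definition \ref{def:strongly-chordal}: a graph is strongly chordal precisely when it is chordal and $n$-sun-free for all $n \geq 3$. The graph $S_{n}$ is itself an $n$-sun, so it trivially contains an induced $n$-sun (namely $S_{n}$ itself) and is therefore not $n$-sun-free. Consequently $S_{n}$ is not strongly chordal, and the contrapositive of Theorem \ref{thm:main-1} forces $S_{n}$ to admit no MAT-labeling.

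Since the substantive work has already been carried out in Theorem \ref{thm:main-1}, there is no genuine obstacle here; the corollary merely records the extremal case in which the forbidden induced subgraph is the entire graph. One could alternatively give a self-contained argument mirroring the final paragraphs of the proof of Theorem \ref{thm:main-1}, applying Proposition \ref{the number of maximal exponents = the number of largest cliques} together with Lemmas \ref{restriction to intersection of maximal cliques} and \ref{restriction to the union} to a hypothetical MAT-labeling of $S_{n}$ and deriving the same contradiction, but invoking the theorem directly is the cleanest route.
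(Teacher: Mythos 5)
Your proof is correct and matches the paper's (implicit) argument exactly: the corollary follows from Theorem \ref{thm:main-1} because $S_n$ contains itself as an induced $n$-sun and hence is not strongly chordal. The paper states this as an immediate consequence with no further proof, so nothing more is needed.
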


%******************************************************************************** 

\section{Strong chordality implies MAT-freeness}
\subsection{MAT-simplicial vertices and MAT-perfect elimination orderings}
\label{subsec:MAT-S-PEO}
\quad
The proof of the ``if" part of our main Theorem \ref{thm:MAT-free-strong-chordal} (strong chordality implies MAT-freeness) requires more effort. 
We need a deeper understanding of the structure of graphs having MAT-labelings. 
In this subsecion, we develop a fundamental study on such graphs analogous to the theory of (strongly) chordal graphs by introducing MAT- versions of simplicial vertex and perfect elimination ordering.
\begin{definition}[MAT-simplicial vertices]
\label{definition MAT-simplicial}
Given an edge-labeled graph $ (G,\lambda) $, a vertex $ v \in V_{G} $ is said to be \textbf{MAT-simplicial} if the following conditions hold. 
\begin{enumerate}[(MS1)]
\item\label{definition MAT-simplicial 1} $ v $ is a simplicial vertex of $ G $, that is, its neighborhood $ N_{G}(v) $ is a clique of $ G $.
\item\label{definition MAT-simplicial 2} $ \Set{\lambda(\{u,v\}) \in \mathbb{Z}_{>0} | u \in N_{G}(v) } = \{1,2, \dots, \deg_{G}(v)\} $, where $ \deg_{G}(v) =|N_{G}(v) |$ denotes the degree  of $ v $ in $ G $. 
\item\label{definition MAT-simplicial 3} For any distinct vertices $ u_{1}, u_{2} \in N_{G}(v) $, $ \lambda(\{u_{1}, u_{2}\}) < \max\{\lambda(\{u_{1}, v\}), \lambda(\{u_{2}, v\})\} $. 
\end{enumerate}
\end{definition}

Next we show the existence of MAT-simplicial vertices in the graphs having MAT-labelings.

\begin{lemma}
\label{lem:MATS-existence}
Let $ (G, \lambda) $ be an edge-labeled graph such that $ |V_{G}| \geq 2 $ and $ \lambda $ is an MAT-labeling of $ G $. 
\begin{enumerate}[(1)]
\item\label{MATS-existence 1} If $ G = K_{\ell} $ is a complete graph, then the endvertices of the edge with maximal label are MAT-simplicial. 
\item\label{MATS-existence 2} 
If $ G $ is noncomplete, then $ (G, \lambda) $ has two nonadjacent MAT-simplicial vertices. 
\end{enumerate}
\end{lemma}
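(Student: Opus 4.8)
The plan is to prove the two parts in order, using part (\ref{MATS-existence 1}) as the engine for part (\ref{MATS-existence 2}).

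For part (\ref{MATS-existence 1}), write $n=\ell-1$ for the maximal label. By Proposition \ref{MAT-labeling on complete graph} we have $|\pi_k|=\ell-k$ for every $k\in[\ell-1]$; in particular $\pi_n=\{e\}$ is a single edge $e=\{a,b\}$ and $|\pi_{n-1}|=2$. Condition (MS\ref{definition MAT-simplicial 1}) is automatic in a complete graph, so only (MS\ref{definition MAT-simplicial 2}) and (MS\ref{definition MAT-simplicial 3}) require work, and I would establish them for $a$ (the case of $b$ being symmetric) by induction on $\ell$, with the base case $\ell=2$ immediate. For the inductive step I would delete the single top edge: by Corollary \ref{cor:MAT}(1) the graph $G'=G\setminus\pi_n=K_\ell\setminus e$ carries the MAT-labeling $\lambda|_{E_{n-1}}$, and $G'$ is chordal with exactly two maximal cliques $C'=V_G\setminus\{b\}$ and $C''=V_G\setminus\{a\}$, each a copy of $K_{\ell-1}$. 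Restricting along the node $C'\in\mathcal P_{G'}$ via Lemma \ref{restriction to intersection of maximal cliques} gives an MAT-labeling of $G'[C']=K_{\ell-1}$, to which the induction hypothesis applies.

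The heart of the step is to show that $a$ is an endpoint of the (unique, by Proposition \ref{MAT-labeling on complete graph}) maximal-label edge of $C'$. To see this, note that $S:=V_G\setminus\{a,b\}=C'\cap C''$ is also a node, so $\lambda|_{E_{G[S]}}$ is an MAT-labeling of $K_{\ell-2}$ and every edge inside $S$ has label at most $\ell-3$. Consequently each of the two edges of label $\ell-2$ must meet $\{a,b\}$, and since neither equals $e$, each has exactly one endpoint in $\{a,b\}$ and one in $S$. If both met $a$ they would be two maximal-label edges of $C'$, contradicting uniqueness; hence exactly one, say $\{a,x\}$, meets $a$, and it is the maximal-label edge of $C'$. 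The induction hypothesis then makes $a$ MAT-simplicial in $C'=K_{\ell-1}$, and re-attaching $e$ is harmless: $\lambda(\{a,b\})=\ell-1$ is the unique largest label, so appending it extends the label set at $a$ from $\{1,\dots,\ell-2\}$ to $\{1,\dots,\ell-1\}$, giving (MS\ref{definition MAT-simplicial 2}), while every pair of neighbors involving $b$ satisfies (MS\ref{definition MAT-simplicial 3}) because $\lambda(\{a,b\})$ dominates all other labels.

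For part (\ref{MATS-existence 2}) I would reduce to part (\ref{MATS-existence 1}) through the clique structure. Since $G$ is noncomplete and chordal it has at least two maximal cliques, so (using any clique tree) at least two of them, say $C_1$ and $C_2$, are leaves; write $S_i$ for the separator joining $C_i$ to the rest of the tree, a proper subclique of $C_i$ which by Remark \ref{Ho and Lee} is a node, and recall that $C_i\setminus S_i$ consists precisely of vertices lying in no other maximal clique, i.e.\ simplicial vertices $v$ of $G$ with $N_G(v)=C_i\setminus\{v\}$. Lemma \ref{restriction to intersection of maximal cliques} gives MAT-labelings of $G[C_i]$ and of $G[S_i]$; by Proposition \ref{MAT-labeling on complete graph} the latter forces every edge inside $S_i$ to have label at most $|S_i|-1<|C_i|-1$, so the maximal-label edge of $G[C_i]$ has an endpoint $v_i\in C_i\setminus S_i$. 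By part (\ref{MATS-existence 1}) this $v_i$ is MAT-simplicial in $G[C_i]$, and because all edges incident to $v_i$ already lie in $G[C_i]$, the three conditions (MS\ref{definition MAT-simplicial 1})--(MS\ref{definition MAT-simplicial 3}) are unaffected by the ambient graph, so $v_i$ is MAT-simplicial in $G$. Finally $v_1$ and $v_2$ lie in distinct maximal cliques and in no common clique, hence are nonadjacent. I expect the main obstacle to be the inductive step of part (\ref{MATS-existence 1})---specifically the identification of $a$ as an endpoint of the maximal-label edge of $C'$---since this is where the interplay of the counting $|\pi_{n-1}|=2$, the sub-node $S$, and the uniqueness of top edges in complete graphs is essential; part (\ref{MATS-existence 2}) is then a comparatively formal descent to maximal cliques, the only delicate point being to keep the chosen MAT-simplicial vertices outside the separators so that they stay simplicial in $G$ and nonadjacent.
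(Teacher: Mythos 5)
Your proof is correct, but it takes a genuinely different route from the paper's in both parts. For part (\ref{MATS-existence 1}) the paper argues directly, with no induction: writing $e_0=\{u_0,v_0\}$, it restricts $\lambda$ to the nodes $C=V_G\setminus\{u_0\}$ and $X=V_G\setminus\{u_0,v_0\}$ of $\mathcal{P}_{G\setminus e_0}$ via Lemma \ref{restriction to intersection of maximal cliques}, reads off (MS\ref{definition MAT-simplicial 2}) by comparing the exponents of $\mathcal{A}_{G[C]}$ and $\mathcal{A}_{G[X]}$, and deduces (MS\ref{definition MAT-simplicial 3}) straight from (ML\ref{definition MAT-labeling closure}) and (ML\ref{definition MAT-labeling triangle}) of the restricted labelings. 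Your induction on $\ell$ uses exactly the same three nodes but routes everything through the inductive hypothesis on $C'=V_G\setminus\{b\}$, which forces the extra (correct) step of locating the unique top edge of $C'$ via the count $|\pi_{\ell-2}|=2$; the paper's version is shorter, yours makes the cascading structure of top edges explicit. For part (\ref{MATS-existence 2}) the divergence is larger: the paper picks a minimal $(a,b)$-separator $S$, must verify by hand that $\lambda$ restricted to $G[A\cup S]$ and $G[B\cup S]$ still satisfies (ML\ref{definition MAT-labeling triangle}) (these sets are not nodes, so Lemma \ref{restriction to intersection of maximal cliques} does not apply to them), and then inducts on $|V_G|$. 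Your leaf-clique argument avoids both the induction and the extra (ML\ref{definition MAT-labeling triangle}) check, since leaf cliques and their attaching separators are nodes; the price is importing standard clique-tree facts (existence, the induced-subtree property, and that the private vertices of a leaf clique are exactly $C_i\setminus S_i$) that the paper only alludes to in Remark \ref{Ho and Lee}. One small shared omission: both your argument and the paper's quietly reduce to the connected case, which for your version means either passing to a clique forest or treating components separately (each component is a union of maximal cliques, so Lemmas \ref{restriction to intersection of maximal cliques} and \ref{restriction to the union} give an MAT-labeling of each component).
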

\begin{proof}
First we prove part (\ref{MATS-existence 1}). 
Let $ e_{0} = \{u_{0}, v_{0}\} \in E_{G} $ be the edge with maximal label. 
It suffices to prove that $ v_{0} $ is MAT-simplicial. 
First, (MS\ref{definition MAT-simplicial 1}) is clear. 
Next we show (MS\ref{definition MAT-simplicial 2}). 
Note that $ \lambda|_{E_{G\setminus e_0}} $ is an MAT-labeling.
Then by Lemma \ref{restriction to intersection of maximal cliques}, the labelings $ \lambda|_{E_{G[C]}} $ and $ \lambda|_{E_{G[X]}} $ are MAT-labelings, where $ C \coloneqq V_{G}\setminus\{u_{0}\} $ and $ X \coloneqq V_{G} \setminus \{u_{0}, v_{0}\} $. 
Comparing the exponents of $ \mathcal{A}_{G[C]} $ and $ \mathcal{A}_{G[X]} $, we have $ \Set{\lambda(\{v, v_{0}\}) \in \mathbb{Z}_{>0} | v \in X } = [\ell -2] $. 
Since $ \lambda(e_{0}) = \ell -1 $, $ \Set{\lambda(\{v, v_{0}\}) \in \mathbb{Z}_{>0} | v \in N_{G}(v_{0}) } = [\ell -1] $.
Thus (MS\ref{definition MAT-simplicial 2}) is satisfied. 

Lastly,  we show (MS\ref{definition MAT-simplicial 3}). 
Let $ u,v \in X $ and write $ \lambda(\{u,v\}) = k $. 
We want to show $ k<\max\{\lambda(\{u,v_{0}\}), \lambda(\{v,v_{0}\})\} $. 
Since both $ \lambda|_{E_{G[C]}} $ and $ \lambda|_{E_{G[X]}} $ are MAT-labelings, (ML\ref{definition MAT-labeling triangle}) implies $ k \leq \max\{\lambda(\{u,v_{0}\}), \lambda(\{v,v_{0}\})\} $. 
If the equality happens, then it contradicts to (ML\ref{definition MAT-labeling closure}) of $ \lambda|_{E_{G[C]}} $. 
Therefore $ k < \max\{\lambda(\{u,v_{0}\}), \lambda(\{v,v_{0}\})\} $. 
Moreover, $ \max\Set{\lambda(\{v,u_{0}\}) | v \in X} = \ell -2 < \ell - 1 = \lambda(e_{0}) $, since $ \lambda|_{E_{G\setminus v_{0}}} $ is also an MAT-labeling by Lemma \ref{restriction to intersection of maximal cliques}. 
Therefore (MS\ref{definition MAT-simplicial 3}) holds. 
Thus $ v_0 $ is MAT-simplicial. 

Now we prove part (\ref{MATS-existence 2}). 
 We proceed induction on $ \ell = |V_{G}| $. 
If $ \ell=2 $, then the assertion holds trivially. 
Suppose $ \ell \geq 3 $. 
We may assume that $ G $ is connected. 
Let $ a,b \in V_{G} $ be nonadjacent vertices and $ S $ a minimal $ (a,b) $-separator. 
Then $ S $ is a clique by Theorem \ref{Dirac minimal vertex separator} and $ S \in \mathcal{P}_{G} $ by Remark \ref{Ho and Lee}. 
Hence $ \lambda|_{E_{G[S]}} $ is an MAT-labeling of $ G[S] $ by Lemma \ref{restriction to intersection of maximal cliques}.

Let $ A $ be the vertex set of the connected component containing $ a $ of $ G\setminus S $ and $ B \coloneqq V_{G} \setminus (A \cup S) $. 
We will show that $ \lambda|_{E_{G[A\cup S]}} $ and $ \lambda|_{E_{G[B\cup S]}} $ are MAT-labelings. 
By Proposition \ref{restriction to subset}, it suffices to show (ML\ref{definition MAT-labeling triangle}). 
Let $ e \in \pi_{k} \cap E_{G[A \cup S]} $. 
Then $ e $ forms at most $ k-1 $ triangles with edges in $ E_{k-1} \cap E_{G[A \cup S]} $ since $ \lambda $ is an MAT-labeling. 
When $ e \in E_{G[S]} $, $ e $ forms exactly $ k-1 $ triangles with edges in $ E_{k-1} \cap E_{G[S]} $. 
Therefore $ e $ forms exactly $ k-1 $ triangles with edges in $ E_{k-1} \cap E_{G[A \cup S]} $. 
Suppose that at least one endvertex of $ e $ belongs to $ A $. 
Then $ e $ cannot form a triangle with a vertex in $ B $. 
Hence $ e $ forms exactly $ k-1 $ triangles with edges in $ E_{k-1} \cap E_{G[A \cup S]} $. 
Thus $ \lambda|_{E_{G[A\cup S]}} $ is an MAT-labeling.
By a similar way, one can prove that $ \lambda|_{E_{G[B\cup S]}} $ is an MAT-labeling.

Next we show that $ A \setminus S $ contains an MAT-simplicial vertex of $ G $. 
Note that if $ v \in A \setminus S $ is MAT-simplicial in $ G[A\cup S] $, then $ v $ is also MAT-simplicial in $ G $ since $ N_{G}(v) \subseteq A \cup S $. 
If $ G[A \cup S] $ is a complete graph, then the endvertices of the edge $ e_{0} $ in $ G[A \cup S] $ with maximal label is MAT-simplicial in $ G[A \cup S] $ by  part \eqref{MATS-existence 1}. 
Since $ \lambda|_{E_{G[S]}} $ is an MAT-labeling, at least one endvertex of $ e_{0} $ belongs to $ A \setminus S $ by Proposition \ref{MAT-labeling on complete graph}, which is a desired MAT-simplicial vertex. 
If $ G[A \cup S] $ is not a complete graph, then by the induction hypothesis $ G[A \cup S] $ has two nonadjacent MAT-simplicial vertices. 
At least one of them belongs to $ A \setminus S $ since $ S $ is a clique. 
Thus $ A \setminus S $ contains an MAT-simplicial vertex of $ G $. 

Similarly, $ B \setminus S $ contains an MAT-simplicial vertex of $ G $. 
Therefore $ G $ has two nonadjacent MAT-simplicial vertices. 
\end{proof}

The following is a first important property of MAT-simplicial vertices.
\begin{proposition}\label{MAT-simplicial}
Let $ (G, \lambda) $ be an edge-labeled graph with $ |V_{G}| \geq 2 $. 
Suppose that $ v \in V_{G} $ is an MAT-simplicial vertex of $ (G,\lambda) $. 
The following are equivalent. 
\begin{enumerate}[(1)]
\item $ \lambda $ is an MAT-labeling of $ G $. 
\item $ \lambda|_{E_{G\setminus v}} $ is an MAT-labeling of $ G\setminus v $. 
\end{enumerate}
\end{proposition}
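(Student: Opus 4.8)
The plan is to verify the three labeling conditions directly for each direction, the whole argument hinging on one consequence of MAT-simpliciality that I would isolate at the outset: whenever two vertices $u_{1}, u_{2} \in N_{G}(v)$ close a triangle with $v$, condition (MS\ref{definition MAT-simplicial 3}) gives $\lambda(\{u_{1},u_{2}\}) < \max\{\lambda(\{u_{1},v\}), \lambda(\{u_{2},v\})\}$, so at least one of the two edges joining $v$ outranks the edge $\{u_{1},u_{2}\}$. This single observation governs both implications.

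For $(1) \Rightarrow (2)$: since $E_{G\setminus v} \subseteq E_{G}$, Proposition \ref{restriction to subset} already grants (ML\ref{definition MAT-labeling forest}) and (ML\ref{definition MAT-labeling closure}) for $\lambda|_{E_{G\setminus v}}$, so only (ML\ref{definition MAT-labeling triangle}) needs checking. For an edge $e \in \pi_{k}$ not incident to $v$, the triangles it forms with smaller-label edges in $G$ and in $G\setminus v$ differ only by triangles through $v$; the observation shows any such triangle uses a $v$-edge of label $>k$, hence is never among those counted by (ML\ref{definition MAT-labeling triangle}). The triangle count is therefore unchanged, and (ML\ref{definition MAT-labeling triangle}) transfers to the restriction.

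For $(2) \Rightarrow (1)$: writing $d = \deg_{G}(v)$, condition (MS\ref{definition MAT-simplicial 2}) says the $v$-edges realize the labels $1, \dots, d$ bijectively, so every class $\pi_{k}$ of $\lambda$ on $G$ equals the class $\pi_{k}'$ on $G\setminus v$ together with at most one extra edge incident to $v$. I would check the three conditions in turn. For (ML\ref{definition MAT-labeling forest}), attaching a single edge at the isolated vertex $v$ to the forest $\pi_{k}'$ cannot create a cycle. For (ML\ref{definition MAT-labeling triangle}), edges not incident to $v$ are handled exactly as in the first direction, while for a $v$-edge $\{v,u\}$ of label $k$ the $k-1$ neighbours $w$ with $\lambda(\{v,w\}) < k$ are precisely the triangle-completing vertices, and (MS\ref{definition MAT-simplicial 3}) forces $\lambda(\{u,w\}) < k$ for each, yielding exactly $k-1$ triangles. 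Condition (ML\ref{definition MAT-labeling closure}) is the delicate one: if some edge of $E_{k-1}$ lies in $\cl(\pi_{k})$, then either the connecting path avoids $v$---already contradicting (ML\ref{definition MAT-labeling closure}) for $G\setminus v$---or it ends at $v$, which (as $v$ has degree at most one in $\pi_{k}$) forces that edge to be a $v$-edge $\{v,w\} \in E_{k-1}$ whose other endpoint $w$ is joined in $\pi_{k}'$ to the neighbour $u_{k}$ with $\lambda(\{v,u_{k}\})=k$; but then (MS\ref{definition MAT-simplicial 3}) places $\{u_{k},w\}$ in $E_{k-1} \cap \cl(\pi_{k}')$, again contradicting (ML\ref{definition MAT-labeling closure}) for $G\setminus v$.

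I expect this last point---(ML\ref{definition MAT-labeling closure}) in the direction $(2) \Rightarrow (1)$---to be the main obstacle, since it is the only place where reinstating the $v$-edges could conceivably introduce a forbidden closure. The degree bound on $v$ inside $\pi_{k}$ together with (MS\ref{definition MAT-simplicial 3}) is what ultimately pins the failure mode down to a violation already present in $G\setminus v$. All remaining verifications are short once the triangle-through-$v$ observation has been isolated.
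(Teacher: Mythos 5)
Your proposal is correct and follows essentially the same route as the paper's proof: both directions reduce to the observation that (MS\ref{definition MAT-simplicial 3}) prevents any triangle through $v$ from having both $v$-edges in $E_{k-1}$, the forward direction invokes Proposition \ref{restriction to subset} to reduce to (ML\ref{definition MAT-labeling triangle}), and your treatment of (ML\ref{definition MAT-labeling closure}) in the converse (replacing the subpath through $v$ by the chord $\{u_k,w\}$, which lies in $E_{k-1}\cap\cl(\pi_k')$ by (MS\ref{definition MAT-simplicial 1}) and (MS\ref{definition MAT-simplicial 3})) is exactly the paper's argument. No gaps.
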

\begin{proof}
First we prove $ (1) \Rightarrow (2) $. 
Let $ \pi_{k}^{\prime} \coloneqq (\lambda|_{E_{G\setminus v}})^{-1}(k) =\pi_k \cap E_{G\setminus v}$ and $ E_{k-1}^{\prime} \coloneqq \pi^{\prime}_{1} \sqcup \dots \sqcup \pi^{\prime}_{k-1} $ for $ k \in \mathbb{Z}_{>0} $. 
By Proposition  \ref{restriction to subset}, we only need to prove (ML\ref{definition MAT-labeling triangle}) of $ \lambda|_{E_{G\setminus v}} $.

Let $ e \in \pi_{k}^{\prime} $. 
Since $ e \in \pi_{k}^{\prime} \subseteq \pi_{k} $, $ e $ forms exactly $ k-1 $ triangles with edges in $ E_{k-1} $. 
These triangles do not contain the vertex $ v $ because by (MS\ref{definition MAT-simplicial 3}) the number of edges incident to $ v $ with label less than $ k $ is at most $ 1 $. 
Therefore $ e $ forms exactly $ k-1 $ triangles with edges in $ E_{k-1}^{\prime} $. 
Thus $ \lambda|_{E_{G\setminus v}} $ is an MAT-labeling of $ G\setminus v $. 

Next we prove $ (2) \Rightarrow (1) $. 
Let $ k \in \mathbb{Z}_{>0} $.
By  (MS\ref{definition MAT-simplicial 2}), $ v $ is a leaf or  an isolated vertex of $ \pi_{k} $. 
Moreover, since $ \pi_{k}^{\prime} $ is a forest, $ \pi_{k} $ is a forest. 
This shows (ML\ref{definition MAT-labeling forest}). 

To show (ML\ref{definition MAT-labeling closure}), suppose $ \cl_{G}(\pi_{k}) \cap E_{k-1} \neq \varnothing $ and take $ e \in \cl_{G}(\pi_{k}) \cap E_{k-1} $. 
Then there exists a cycle $ C $ in $G$ such that $ e \in C $ and $ C \setminus e \subseteq \pi_{k} $. 
If $ e $ is not incident to $ v $ (in particular, $v$ is not a vertex of $C$), then $ e \in E_{k-1}^{\prime} $ and $ C \setminus e \subseteq \pi_{k}^{\prime} $. 
Therefore $ e \in \cl_{G\setminus v}(\pi_{k}^{\prime}) \cap E_{k-1}^{\prime} = \varnothing$, a contradiction. 
Hence $ e $ is incident to $ v $, and $ C $ contains an edge $ \{v,w\} $ with $ \lambda(\{v,w\}) = k $. 
Write $ e = \{u,v\} $. 
Then $ \{u,w\} \in E_{G} $ by (MS\ref{definition MAT-simplicial 1}) and $ \lambda(\{u,w\}) < \max\{\lambda(\{u,v\}), \lambda(\{v,w\})\} = k $ by (MS\ref{definition MAT-simplicial 3}). 
Hence $ \{u,w\} \in E_{k-1}^{\prime} $ (in particular, $C$ has length at least $4$).
Moreover,  $ \{u,w\}$ and $ C \setminus\{\{v,w\}, e\} \subseteq \pi_{k}^{\prime} $ form a cycle and hence $ \{u,w\} \in \cl_{G\setminus v}(\pi_{k}^{\prime}) \cap E_{k-1}^{\prime}= \varnothing$, a contradiction. 

Finally, we prove (ML\ref{definition MAT-labeling triangle}). 
Let $ e \in \pi_{k} $.
If $ e \in \pi_{k}^{\prime} $ (i.e, $ e $ is not incident to $ v $), then $ e $ forms exactly $ k-1 $ triangles with some edges in $ E_{k-1}^{\prime} $. 
If one endvertex of $ e $ is not adjacent to $ v $, then $ e $ and $v$ cannot form a triangle. 
If both endvertices of $ e $ are adjacent to $ v $, then at least one edge of the triangle containing $ e $ and $ v $ has label greater than $ k $ by (MS\ref{definition MAT-simplicial 3}). 
In either case, $ e $ forms exactly $ k-1 $ triangles with some edges in $ E_{k-1} $. 
Now consider  $ e \in \pi_{k} \setminus \pi_{k}^{\prime} $ (i.e., $ e $ is incident to $ v $). 
By  (MS\ref{definition MAT-simplicial 2}), we can index the elements of $ N_{G}(v)$  as $\{u_{1}, \dots, u_{d}\} $  where $k\le d=\deg_G(v)$ so that $ e = \{u_{k}, v\} $ and $ \lambda(\{u_{i}, v\}) = i $ for every $ i \in [d] $. 
Hence $ e $ forms exactly $ k-1 $ triangles given by $ \{u_{i}, u_{k}, v\} $ for $ i \in [k-1] $ with some edges in $ E_{k-1} $.  
Thus $ \lambda $ is an MAT-labeling.  
\end{proof}

\begin{definition}[MAT-PEO]
Given an edge-labeled graph $ (G,\lambda) $, an ordering $ (v_{1}, \dots, v_{\ell}) $ of $ G $ is said to be a \textbf{MAT-perfect elimination ordering (MAT-PEO)} of $ (G,\lambda) $ if $ v_{i} $ is MAT-simplicial in $ (G_{i}, \lambda_{i}) $ for each $ i \in [\ell] $, where $ G_{i} \coloneqq G[\{v_{1}, \dots, v_{i}\}] $ and $ \lambda_{i} \coloneqq \lambda|_{E_{G_{i}}} $. 
\end{definition}

In particular, any MAT-PEO is a PEO.
The theorem below exhibits a strong connection between MAT-labelings and MAT-PEOs which can be seen as an analogue of Theorem \ref{thm:chordal-PEO}.
\begin{theorem}[MAT-labelings and MAT-PEOs]
\label{MAT-labeling ordering}
Given an edge-labeled graph $ (G, \lambda) $, the following are equivalent. 
\begin{enumerate}[(1)]
\item $ \lambda $ is an MAT-labeling of $ G $. 
\item There exists an MAT-PEO of $ (G, \lambda) $. 
\end{enumerate}
\end{theorem}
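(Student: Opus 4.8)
The plan is to establish both implications by a single induction on $\ell = |V_G|$, using Proposition \ref{MAT-simplicial} as the engine of the induction step and Lemma \ref{lem:MATS-existence} to produce the vertex to be eliminated in the forward direction. The base case $\ell = 1$ is immediate: $G$ has no edges, so the empty labeling is vacuously an MAT-labeling, the lone vertex satisfies (MS\ref{definition MAT-simplicial 1})--(MS\ref{definition MAT-simplicial 3}) vacuously, and the one-term ordering is an MAT-PEO; thus both statements hold.

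For $(1) \Rightarrow (2)$ with $\ell \ge 2$: assuming $\lambda$ is an MAT-labeling, Lemma \ref{lem:MATS-existence} supplies an MAT-simplicial vertex of $(G,\lambda)$, which I take to be $v_\ell$. Proposition \ref{MAT-simplicial} then makes $\lambda|_{E_{G\setminus v_\ell}}$ an MAT-labeling of $G \setminus v_\ell$, and the induction hypothesis furnishes an MAT-PEO $(v_1, \dots, v_{\ell-1})$ of it. Because $G\setminus v_\ell = G[\{v_1, \dots, v_{\ell-1}\}]$ and, for each $i \le \ell-1$, the pair $(G[\{v_1, \dots, v_i\}], \lambda|_{E_{G[\{v_1,\dots,v_i\}]}})$ is untouched by the deletion of $v_\ell$, appending $v_\ell$ yields an MAT-PEO $(v_1, \dots, v_\ell)$ of $(G,\lambda)$.

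For $(2) \Rightarrow (1)$: given an MAT-PEO $(v_1, \dots, v_\ell)$ of $(G,\lambda)$, its truncation $(v_1, \dots, v_{\ell-1})$ is by definition an MAT-PEO of $(G\setminus v_\ell, \lambda|_{E_{G\setminus v_\ell}})$, so the induction hypothesis makes $\lambda|_{E_{G\setminus v_\ell}}$ an MAT-labeling. Since $v_\ell$ is MAT-simplicial in $(G_\ell, \lambda_\ell) = (G, \lambda)$, the implication $(2)\Rightarrow(1)$ of Proposition \ref{MAT-simplicial} shows $\lambda$ is an MAT-labeling of $G$, closing the induction.

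I expect no serious obstacle: the substantive work has already been discharged into Proposition \ref{MAT-simplicial}, which localizes the MAT-labeling condition to a single simplicial vertex deletion, and into Lemma \ref{lem:MATS-existence}, which guarantees such a vertex exists. The one point deserving care is the compatibility of the filtration $(G_i, \lambda_i)$ appearing in the definition of MAT-PEO with vertex deletion at the top, namely that truncating or extending the ordering does not alter the induced subgraphs and restricted labelings indexed by $i < \ell$; this is routine, since $G[\{v_1,\dots,v_i\}]$ does not involve $v_\ell$.
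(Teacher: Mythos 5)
Your proposal is correct and follows essentially the same route as the paper: induction on $|V_G|$, with Lemma \ref{lem:MATS-existence} producing the MAT-simplicial vertex for the forward direction and Proposition \ref{MAT-simplicial} handling the deletion/reattachment step in both directions. The paper merely leaves the $(2)\Rightarrow(1)$ direction as "easy thanks to Proposition \ref{MAT-simplicial}," and your write-up supplies exactly the intended argument.
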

\begin{proof}
Both implications can be proved by induction on $ \ell = |V_{G}| $.
The implication $ (2) \Rightarrow (1) $ is easy thanks to Proposition \ref{MAT-simplicial}. 
Let us show the converse (which is also not hard). 
When $ \ell = 1 $, the assertion is true.
Suppose $ \ell \geq 2 $. 
By Lemma \ref{lem:MATS-existence}, there exists an MAT-simplicial vertex $ v_{\ell} $ of $ (G, \lambda) $. 
Then $ \lambda|_{E_{G\setminus v_{\ell}}} $ is an MAT-labeling of $ G\setminus v_{\ell} $ by Proposition \ref{MAT-simplicial}. 
By the induction hypothesis, $ (G\setminus v_{\ell}, \lambda|_{E_{G\setminus v_{\ell}}}) $ has an MAT-PEO $ (v_{1}, \dots, v_{\ell -1}) $. 
Thus $ (v_{1}, \dots, v_{\ell}) $ is an MAT-PEO of $ (G,\lambda) $. 
\end{proof}

We complete this subsection by giving two lemmas on (extensions of) MAT-labelings and MAT-PEOs of complete graphs. 
MAT-labelings of complete graphs will play a crucial role in the next subsection. 

\begin{lemma}\label{MAT-L-PEO of complete graph}
Let $ G = K_{\ell} $ be a complete graph and $ W \subseteq V_{G} $. 
\begin{enumerate}[(1)]
\item\label{MAT-L-PEO of complete graph 1} 
Let $ \lambda $ be an MAT-labeling of $ G$. If $ (v_{1}, \dots, v_{r}) $ is an MAT-PEO  of $ (G[W], \lambda|_{E_{G[W]}}) $, then $ (v_{1}, \dots, v_{r}) $ can be extended to an MAT-PEO $ (v_{1}, \dots, v_{r}, \dots, v_{\ell}) $ of $ (G,\lambda) $. 
\item\label{MAT-L-PEO of complete graph 2} 

If $ \lambda_{W} $ is an MAT-labeling of $ G[W] $, then $ \lambda_{W} $ can be extended to an MAT-labeling of $ G $. 

\end{enumerate}
As a consequence, a complete graph always has an MAT-labeling (and an MAT-PEO) which can be constructed inductively from any vertex of the graph.
\end{lemma}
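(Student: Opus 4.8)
The plan is to prove both parts by induction, leaning on the characterizations of MAT-labelings through MAT-simplicial vertices (Proposition \ref{MAT-simplicial}) and MAT-PEOs (Theorem \ref{MAT-labeling ordering}), together with the explicit description of the MAT-simplicial vertices of a complete graph in Lemma \ref{lem:MATS-existence}(\ref{MATS-existence 1}) and the label count in Proposition \ref{MAT-labeling on complete graph}. For part (\ref{MAT-L-PEO of complete graph 1}) I would induct on $\ell$. If $W = V_G$ there is nothing to prove, so assume $W \subsetneq V_G$, hence $r = |W| < \ell$. The crucial step is to find an MAT-simplicial vertex of $(K_\ell, \lambda)$ lying outside $W$, to be used as the final vertex $v_\ell$. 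By Lemma \ref{lem:MATS-existence}(\ref{MATS-existence 1}) the MAT-simplicial vertices are the two endvertices of the edge $e_0$ with maximal label, and by Proposition \ref{MAT-labeling on complete graph} that label equals $\ell - 1$. Since $(v_1, \dots, v_r)$ is an MAT-PEO of $(G[W], \lambda|_{E_{G[W]}})$, Theorem \ref{MAT-labeling ordering} shows $\lambda|_{E_{G[W]}}$ is an MAT-labeling of $K_r$, all of whose labels are at most $r - 1 < \ell - 1$; therefore $e_0 \notin E_{G[W]}$ and at least one endvertex of $e_0$, which I take to be $v_\ell$, lies outside $W$. Proposition \ref{MAT-simplicial} then makes $\lambda|_{E_{G \setminus v_\ell}}$ an MAT-labeling of $K_{\ell-1}$, while $(v_1, \dots, v_r)$ remains an MAT-PEO of $G[W] = (G \setminus v_\ell)[W]$; the induction hypothesis extends it to an MAT-PEO of $G \setminus v_\ell$, and appending $v_\ell$ completes an MAT-PEO of $(G, \lambda)$.

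For part (\ref{MAT-L-PEO of complete graph 2}) I would induct on $\ell - |W|$, inserting one vertex at a time. Theorem \ref{MAT-labeling ordering} furnishes an MAT-PEO $(u_1, \dots, u_{|W|})$ of $(G[W], \lambda_W)$. Choosing $w \in V_G \setminus W$, I would set $\lambda(\{w, u_j\}) := j$ for each $j$. The heart of the matter is to verify that $w$ is MAT-simplicial in $G[W \cup \{w\}]$: condition (MS\ref{definition MAT-simplicial 1}) is immediate, (MS\ref{definition MAT-simplicial 2}) holds since the labels $1, \dots, |W|$ occur exactly once on the new edges, and for (MS\ref{definition MAT-simplicial 3}) I use that each $u_j$ is MAT-simplicial in $G[\{u_1, \dots, u_j\}]$, so that by (MS\ref{definition MAT-simplicial 2}) every edge $\{u_i, u_j\}$ with $i < j$ has label at most $j - 1 < j = \max\{\lambda(\{u_i, w\}), \lambda(\{u_j, w\})\}$. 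Proposition \ref{MAT-simplicial} then promotes $\lambda$ to an MAT-labeling of $G[W \cup \{w\}]$, and the induction hypothesis extends it over all of $G$.

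The displayed consequence is the special case $W = \{v\}$: the edgeless graph $K_1$ carries the (vacuous) empty MAT-labeling, and part (\ref{MAT-L-PEO of complete graph 2}) extends it by successively appending the remaining vertices as new MAT-simplicial vertices, yielding an MAT-labeling and an MAT-PEO that begin at $v$. I expect the main obstacle to be the label bookkeeping at the inductive step: in part (\ref{MAT-L-PEO of complete graph 1}) ensuring that a removable (MAT-simplicial) vertex can always be found \emph{outside} $W$, and in part (\ref{MAT-L-PEO of complete graph 2}) verifying (MS\ref{definition MAT-simplicial 3}) for the inserted vertex. Both are resolved by exploiting the unique maximal-label edge and the MAT-PEO structure peculiar to complete graphs.
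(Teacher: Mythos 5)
Your proof is correct and follows essentially the same route as the paper: part (1) by induction on $\ell$, locating an MAT-simplicial endvertex of the maximal-label edge outside $W$ via Lemma \ref{lem:MATS-existence}(\ref{MATS-existence 1}), and part (2) by adding one vertex at a time with the labeling $\lambda(\{w,u_j\})=j$ along an MAT-PEO and verifying (MS\ref{definition MAT-simplicial 1})--(MS\ref{definition MAT-simplicial 3}) before invoking Proposition \ref{MAT-simplicial}. The only cosmetic difference is that you spell out, via Theorem \ref{MAT-labeling ordering} and Proposition \ref{MAT-labeling on complete graph}, why the maximal-label edge cannot lie in $E_{G[W]}$, a step the paper leaves implicit.
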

\begin{proof}
(\ref{MAT-L-PEO of complete graph 1}) 
We proceed by induction on $ \ell = |V_{G}| $. 
When $ \ell = 1 $, we have nothing to prove. 
Now suppose $ \ell \geq 2 $. 
If $ W = V_{G} $, the assertion holds trivially. 
Suppose $ W \subsetneq V_{G} $. 
Let $ e_{0}\in E_G $ be the edge with maximal label. 
Since $ \max\Set{\lambda(e) \in \mathbb{Z}_{>0} | e \in E_{G[W]}} < \lambda(e_{0}) = \ell-1 $, at least one endvertex of $ e_{0} $, say $ v_{\ell} $  does not belong to $ W $. 
By Lemma \ref{lem:MATS-existence}(\ref{MATS-existence 1}), $ v_{\ell} $ is MAT-simplicial in $ G $. 
By the induction hypothesis, there exists an MAT-PEO $ (v_{1}, \dots, v_{r}, \dots, v_{\ell-1}) $ of $ G\setminus v_{\ell} $. 
Hence $ (v_{1}, \dots, v_{\ell}) $ is a desired ordering. 

(\ref{MAT-L-PEO of complete graph 2}) 
Without loss of generality, we may assume $ \ell \geq 2 $ and $|W| = \ell - 1 $. 
By Theorem \ref{MAT-labeling ordering}, there exists an MAT-PEO $ (v_{1}, \dots, v_{\ell-1}) $ of $ (G[W], \lambda_{W}) $.
Let $ v_{\ell} $ denote the vertex in $ V_{G}\setminus W $. 
We define a labeling $ \lambda $ of $ G $ by 
\begin{align*}
\lambda(e) \coloneqq
\begin{cases}
\lambda_{W}(e) & \text{ if } e \in E_{G[W]}; \\
i &  \text{ if } e = \{v_{i}, v_{\ell}\} \text{ for } i \in  [\ell -1]. 
\end{cases}
\end{align*}

We will show that $ v_{\ell} $ is MAT-simplicial in $ (G,\lambda) $. 
Firstly, (MS\ref{definition MAT-simplicial 1}) is clear since $ G $ is complete. 
Secondly, we have $ \Set{\lambda(\{v_{i}, v_{\ell}\}) | i \in [\ell-1]} = [\ell-1] $ and hence (MS\ref{definition MAT-simplicial 2}) holds. 
Thirdly, for $ 1\le i < j < \ell $, we have $ \lambda(\{v_{i}, v_{j}\}) \leq j-1 < j = \lambda(\{v_{j}, v_{\ell}\}) $, which shows (MS\ref{definition MAT-simplicial 3}). 
Therefore $ v_{\ell} $ is MAT-simplicial in $ (G,\lambda) $. 
Thus $ (v_{1}, \dots, v_{\ell} )$ is an MAT-PEO in $ (G, \lambda) $ and $ \lambda $ is an MAT-labeling by Proposition \ref{MAT-simplicial}.
\end{proof}

\begin{lemma}\label{merge}
Let $ G = K_{\ell} $. 
Suppose that $ V_{G} = A \cup B $ and there exist MAT-labelings $ \lambda_{A}, \lambda_{B}, \lambda_{A \cap B} $ of $ G[A], G[B], G[A\cap B] $, respectively such that $ \lambda_{A}|_{E_{G[A\cap B]}} = \lambda_{B}|_{E_{G[A\cap B]}} = \lambda_{A \cap B} $. 
Then there exists an MAT-labeling $ \lambda $ of $ G $ such that $ \lambda|_{E_{G[A]}} = \lambda_{A} $ and $ \lambda|_{E_{G[B]}} = \lambda_{B} $. 
\end{lemma}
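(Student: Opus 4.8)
The plan is to reduce the problem to exhibiting a single MAT-PEO of $K_\ell$ and then invoke Theorem \ref{MAT-labeling ordering}. Write $I := A \cap B$, $A' := A \setminus B$ and $B' := B \setminus A$, with $m := |I|$, $p := |A'|$, $q := |B'|$, so that $V_G$ is the disjoint union $A' \sqcup I \sqcup B'$. The edge set of $K_\ell$ splits into the edges inside $A$, the edges inside $B$ (these two families sharing exactly the edges inside $I$), and the ``cross'' edges joining $A'$ to $B'$. On the first two families the labeling is forced: we must set $\lambda = \lambda_A$ on $E_{G[A]}$ and $\lambda = \lambda_B$ on $E_{G[B]}$, and these prescriptions agree on $E_{G[I]}$ precisely because $\lambda_A|_{E_{G[I]}} = \lambda_B|_{E_{G[I]}} = \lambda_{A\cap B}$. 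Thus the only freedom, and the entire content of the lemma, lies in choosing the labels of the cross edges.

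First I would fix a common MAT-PEO of the overlap. Since $\lambda_{A\cap B}$ is an MAT-labeling of $G[I]$, Theorem \ref{MAT-labeling ordering} yields an MAT-PEO $(w_1,\dots,w_m)$ of $(G[I], \lambda_{A\cap B})$. Because $\lambda_A|_{E_{G[I]}} = \lambda_{A\cap B}$, Lemma \ref{MAT-L-PEO of complete graph}(\ref{MAT-L-PEO of complete graph 1}) extends it to an MAT-PEO $(w_1,\dots,w_m,a_1,\dots,a_p)$ of $(G[A],\lambda_A)$ with $\{a_1,\dots,a_p\} = A'$; likewise it extends to an MAT-PEO $(w_1,\dots,w_m,b_1,\dots,b_q)$ of $(G[B],\lambda_B)$ with $\{b_1,\dots,b_q\} = B'$. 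I then define $\lambda$ on the cross edges by the explicit rule $\lambda(\{a_i,b_s\}) := m + s - 1 + i$, and consider the concatenated ordering $(w_1,\dots,w_m,a_1,\dots,a_p,b_1,\dots,b_q)$. The claim is that this is an MAT-PEO of $(K_\ell,\lambda)$, which by Theorem \ref{MAT-labeling ordering} makes $\lambda$ an MAT-labeling, while the two restriction identities hold by construction.

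It then remains to check that each vertex is MAT-simplicial at the moment it is added. For the initial segments $(w_1,\dots,w_m)$ and $(w_1,\dots,w_m,a_1,\dots,a_p)$ the relevant induced subgraphs and labelings sit inside $G[I]$ and $G[A]$, so the MAT-simplicial condition is inherited from the two MAT-PEOs already fixed; the substance is in the steps adding a vertex $b_s \in B'$. Conditions (MS\ref{definition MAT-simplicial 1}) and (MS\ref{definition MAT-simplicial 2}) are routine: simpliciality is automatic in a complete graph, and a short count shows that the $B$-edges from $b_s$ carry exactly the labels $\{1,\dots,m+s-1\}$ (from the MAT-PEO of $G[B]$) while the cross labels $m+s,\dots,m+p+s-1$ supplied by the formula fill in the rest, so the labels on edges from $b_s$ are exactly $\{1,\dots,m+p+s-1\}$.

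The main obstacle is condition (MS\ref{definition MAT-simplicial 3}) at these steps, namely $\lambda(\{x,y\}) < \max\{\lambda(\{x,b_s\}),\lambda(\{y,b_s\})\}$ for all earlier vertices $x,y$. Pairs with both $x,y$ in $B$ reduce to (MS\ref{definition MAT-simplicial 3}) for the MAT-PEO of $(G[B],\lambda_B)$; the delicate pairs involve $A'$. Here I would use the degree bound from the MAT-PEO of $G[A]$: for $i<j$ the edge $\{a_i,a_j\}$ has $\lambda_A(\{a_i,a_j\}) \le m+j-1$ (and $\{a_i,w_r\}$ has label $\le m+i-1$), which the formula $\lambda(\{a_i,b_s\}) = m+s-1+i$ strictly dominates because $s \ge 1$. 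This is exactly the point where giving larger cross labels to the $a_i$ appearing later in the MAT-PEO of $G[A]$ is essential; an arbitrary assignment of the forced label set to the cross edges would in general violate (MS\ref{definition MAT-simplicial 3}). Once these inequalities are assembled the concatenation is an MAT-PEO and the proof concludes; the degenerate cases $A\cap B = \varnothing$ (no $w_i$) and $B \subseteq A$ (no cross edges, $\lambda = \lambda_A$) are covered by the same formula.
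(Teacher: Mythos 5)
Your proposal is correct and is essentially the paper's own proof: you fix a common MAT-PEO of $G[A\cap B]$, extend it into $G[A]$ and $G[B]$ via Lemma \ref{MAT-L-PEO of complete graph}, and assign the cross edges the same labels $\lambda(\{a_i,b_s\})=m+s-1+i$ as the paper's $\lambda(\{a_{p+i},b_j\})=p+i+j-1$, with the same verification of (MS\ref{definition MAT-simplicial 1})--(MS\ref{definition MAT-simplicial 3}) for the $B\setminus A$ vertices. The only difference is presentational: the paper peels off $b_r,b_{r-1},\dots$ by induction using Proposition \ref{MAT-simplicial}, while you package the same computation as a single MAT-PEO check via Theorem \ref{MAT-labeling ordering}.
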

\begin{proof}
By Lemma \ref{MAT-L-PEO of complete graph}(\ref{MAT-L-PEO of complete graph 1}), there exists an MAT-PEO $ (a_{1}, \dots, a_{p}) $ of $ G[A \cap B] $ and its extensions $ (a_{1}, \dots, a_{p}, a_{p+1}, \dots, a_{p+q}) $ of $ G[A] $ and $ (a_{1}, \dots, a_{p}, b_{1}, \dots, b_{r}) $ of $ G[B] $ (where $p+q+r=\ell$). 
Define a labeling $ \lambda \colon E_{G} \to \mathbb{Z}_{>0} $ by 
\begin{align*}
\lambda(e) \coloneqq \begin{cases}
\lambda_{A}(e) & \text{ if } e \in E_{G[A]}; \\
\lambda_{B}(e) & \text{ if } e \in E_{G[B]}; \\
p + i + j - 1 & \text{ if } e = \{a_{p+i}, b_{j}\} \quad (i \in [q], \,  j \in[  r]). 
\end{cases}
\end{align*}

We claim that $ \lambda $ is a desired MAT-labeling of $ G $ by induction on $ r $. 
If $ r = 0 $, then $ \lambda = \lambda_{A} $ and hence the claim holds. 
Suppose $ r \geq 1 $. 
We will prove that $ b_{r} $ is MAT-simplicial in $ (G, \lambda) $. 
The condition (MS\ref{definition MAT-simplicial 1}) is clear. 
Since $ b_{r} $ is MAT-simplicial in $ (G[B], \lambda_{B}) $, 
\begin{align*}
\Set{\lambda(\{a_{i}, b_{r}\}) | i \in [p ] } \cup \Set{\lambda(\{b_{j}, b_{r}\}) | j \in[  r-1]} =[ p+r-1]. 
\end{align*}
By the definition of $ \lambda $ we have $ \lambda(\{a_{p+i}, b_{r}\}) = p+i+r-1 \ (i \in [q]) $. 
Therefore $ \Set{\lambda(\{v,b_{r}\}) | v \in N_{G}(b_{r})} = [\ell - 1] $ and hence (MS\ref{definition MAT-simplicial 2}) holds. 

Next we show (MS\ref{definition MAT-simplicial 3}), i.e., $ \lambda(\{u,v\}) < \max\{\lambda(\{u,b_{r}\}), \lambda(\{v,b_{r}\})\} $ for any distinct vertices $ u,v \in N_{G}(b_{r}) $. 
It is clear when $ u,v \in B $ since $ b_{r} $ is MAT-simplicial in $ (G[B], \lambda_{B}) $. 
Consider the case $ u=a_{p+i} \in A\setminus B, v=a_{j} \in A \cap B$ for some $ i,j $ with $ p+i > j $. 
Then $ \lambda(\{a_{p+i}, a_{j}\}) \leq p+i-1 < p+i+r-1 = \lambda(\{a_{p+i}, b_{r}\}) $ since $ a_{p+i} $ is MAT-simplicial in $ (G, \lambda|_{E_{G[\{a_{1}, \dots, a_{p+i}\}]}}) $. 
Now consider $ u=a_{p+i} \in A\setminus B, v = b_{j} \in B\setminus A $ for some $ i,j $ with $ 1 \leq i \leq q $ and $ 1 \leq j < r $. 
Then $ \lambda(\{a_{p+i}, b_{j}\}) = p+i+j-1 < p+i+r-1 = \lambda(\{a_{p+i}, b_{r}\}) $. 
Thus (MS\ref{definition MAT-simplicial 3}) holds and $ b_{r} $ is an MAT-simplicial vertex of $ (G, \lambda) $. 

By the induction hypothesis, $ \lambda|_{E_{G\setminus b_{r}}} $ is an MAT-labeling. 
Using Proposition \ref{MAT-simplicial}, we conclude that $ \lambda $ is an MAT-labeling. 
\end{proof}

%******************************************************************************** 
\subsection{Proof of the implication ``strongly chordal  $\Rightarrow$ MAT-free"}
\label{sub:if-part}
\quad
In this subsection we prove the ``if" part of our main Theorem \ref{thm:MAT-free-strong-chordal} (strong chordality implies MAT-freeness). 
To find an MAT-labeling for a given strongly chordal graph, our strategy is to find compatible MAT-labelings of the subgraphs induced by all maximal cliques, then combine the constructions by the following ``gluing trick".
\begin{theorem}[``Gluing trick"]
\label{glue}
Let $ G $ be a simple graph and suppose that $ V_{G} = A \cup B$, $E_{G} = E_{G[A]} \cup E_{G[B]} $, and $ A \cap B $ is a clique. 
Assume that there exist MAT-labelings $ \lambda_{A}, \lambda_{B}, \lambda_{A \cap B} $ of $ G[A], G[B], G[A\cap B] $, respectively  such that $ \lambda_{A}|_{E_{G[A\cap B]}} = \lambda_{B}|_{E_{G[A\cap B]}} =\lambda_{A\cap B}$. 
Define $ \lambda :=\lambda_{A} \cup \lambda_{B} \colon E_{G} \to \mathbb{Z}_{>0} $ by $\lambda|_A = \lambda_{A}, \lambda|_B = \lambda_{B}$, i.e.,
\begin{align*}
\lambda(e) \coloneqq \begin{cases}
\lambda_{A}(e), & \text{ if } e \in E_{G[A]}, \\
\lambda_{B}(e), & \text{ if } e \in E_{G[B]}. 
\end{cases}
\end{align*}
\end{theorem}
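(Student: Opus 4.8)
The plan is to prove that $\lambda$ is an MAT-labeling of $G$ (the identities $\lambda|_{E_{G[A]}}=\lambda_A$ and $\lambda|_{E_{G[B]}}=\lambda_B$ holding by construction), generalizing Lemma \ref{merge} from the complete-graph case. The key structural consequence of the hypothesis $E_{G}=E_{G[A]}\cup E_{G[B]}$ is that \emph{no edge of $G$ joins a vertex of $A\setminus B$ to a vertex of $B\setminus A$}; equivalently, for any $v\in B\setminus A$ we have $N_{G}(v)\subseteq B$, and every edge among vertices of $B$ lies in $E_{G[B]}$ and is labeled by $\lambda_B$. I would proceed by induction on $r:=|B\setminus A|$, peeling off the vertices of $B\setminus A$ one at a time while keeping $G[A]$ and the labeling $\lambda_A$ untouched. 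The base case $r=0$ means $B\subseteq A$, so $G=G[A]$ and $\lambda=\lambda_A$ is an MAT-labeling by hypothesis.

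For the inductive step ($r\ge 1$) I would first locate an MAT-simplicial vertex $v$ of $(G[B],\lambda_B)$ lying in $B\setminus A$. If $G[B]$ is not complete, Lemma \ref{lem:MATS-existence}(\ref{MATS-existence 2}) supplies two nonadjacent MAT-simplicial vertices of $(G[B],\lambda_B)$; since $A\cap B$ is a clique they cannot both lie in $A\cap B$, so one lies in $B\setminus A$. If $G[B]$ is complete, Lemma \ref{lem:MATS-existence}(\ref{MATS-existence 1}) makes the endvertices of the maximally labeled edge $e_0$ MAT-simplicial, and by Proposition \ref{MAT-labeling on complete graph} its label is $|B|-1$; were both endvertices in $A\cap B$ we would get $\lambda_B(e_0)=\lambda_{A\cap B}(e_0)\le |A\cap B|-1<|B|-1$, a contradiction, so $e_0$ has an endvertex in $B\setminus A$. (The degenerate cases $|B|\le 1$ are immediate, an isolated or single vertex being MAT-simplicial.) Next I would transfer MAT-simpliciality from $(G[B],\lambda_B)$ to $(G,\lambda)$: because $N_G(v)=N_{G[B]}(v)\subseteq B$ and $\lambda$ coincides with $\lambda_B$ on every edge incident to $v$ as well as on every edge between two neighbors of $v$, the conditions (MS\ref{definition MAT-simplicial 1}), (MS\ref{definition MAT-simplicial 2}), (MS\ref{definition MAT-simplicial 3}) for $v$ in $(G,\lambda)$ read verbatim as those in $(G[B],\lambda_B)$ and hence hold, so $v$ is MAT-simplicial in $(G,\lambda)$.

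Finally I would close the induction. Applying Proposition \ref{MAT-simplicial} inside $G[B]$ shows that $\lambda_B|_{E_{G[B\setminus\{v\}]}}$ is an MAT-labeling of $G[B\setminus\{v\}]$. Since $v\notin A$, the graph $G\setminus v$ again has the gluing form $V_{G\setminus v}=A\cup(B\setminus\{v\})$, $E_{G\setminus v}=E_{G[A]}\cup E_{G[B\setminus\{v\}]}$, with the same clique $A\cap(B\setminus\{v\})=A\cap B$, and the labelings $\lambda_A$, $\lambda_B|_{E_{G[B\setminus\{v\}]}}$, $\lambda_{A\cap B}$ satisfy the required compatibility; as $|(B\setminus\{v\})\setminus A|=r-1$, the induction hypothesis gives that $\lambda|_{E_{G\setminus v}}$ is an MAT-labeling of $G\setminus v$. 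Now Proposition \ref{MAT-simplicial}, applied to the MAT-simplicial vertex $v$ of $(G,\lambda)$, upgrades this to the conclusion that $\lambda$ is an MAT-labeling of $G$.

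I expect the main obstacle to be precisely the securing of an MAT-simplicial vertex \emph{inside} $B\setminus A$, so that $G[A]$ and $\lambda_A$ remain fixed throughout the recursion: the complete-graph subcase is what forces the exponent/label comparison via Proposition \ref{MAT-labeling on complete graph}, exactly as in the proof of Lemma \ref{merge}. Everything else is driven by the no-crossing-edge structure coming from $E_G=E_{G[A]}\cup E_{G[B]}$, which is what makes MAT-simpliciality in $G[B]$ lift unchanged to $G$ and keeps the glued form stable under deletion.
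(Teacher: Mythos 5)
Your proof is correct and follows essentially the same strategy as the paper's: locate an MAT-simplicial vertex outside the overlap (the paper peels from $A\setminus B$, you peel from $B\setminus A$), observe that it remains MAT-simplicial in $(G,\lambda)$ because no edge crosses between $A\setminus B$ and $B\setminus A$, delete it, apply the induction hypothesis to the smaller glued graph, and conclude via Proposition \ref{MAT-simplicial}. The only cosmetic difference is in the complete-clique subcase, where the paper invokes the MAT-PEO extension of Lemma \ref{MAT-L-PEO of complete graph}(\ref{MAT-L-PEO of complete graph 1}) while you compare maximal labels via Proposition \ref{MAT-labeling on complete graph}; both are valid.
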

\noindent
Then $ \lambda $ is an MAT-labeling of $ G $.
 
\begin{proof}
We proceed by induction on $ \ell = |V_{G}| $.
When $ \ell \leq 2 $ the assertion is trivial. 
Suppose $ \ell \geq 3 $. 
We may assume $ A \setminus  B \neq \varnothing $. 

We claim that $ G[A] $ has an MAT-simplicial vertex in $ A \setminus B $. 
First consider the case $ A $ is a clique. 
Then any MAT-PEO of $ (G[A\cap B], \lambda_{A\cap B}) $ is extended to an MAT-PEO of $ (G[A], \lambda_{A}) $ by Lemma \ref{MAT-L-PEO of complete graph}(\ref{MAT-L-PEO of complete graph 1}), which shows the claim. 
Next suppose that $ A $ is not a clique. 
Then $ G[A] $ has two nonadjacent MAT-simplicial vertices by Lemma \ref{lem:MATS-existence}(\ref{MATS-existence 2}). 
At least one of them belongs to $ A \setminus B $ since $ A \cap B $ is a clique. 
Thus, in either case, $ G[A] $ has an MAT-simplicial vertex, say  $ v_{\ell} $ in $A \setminus B$.
Note that $ v_{\ell} $ is MAT-simplicial also in $ (G, \lambda) $ since $ N_{G}(v_{\ell}) \subseteq E_{G[A]} $. 

By Lemma \ref{MAT-simplicial}, $ \lambda_A|_{E_{G[A]\setminus v_{\ell}}} $ is an MAT-labeling. 
Consider the graph $ G\setminus v_{\ell} $ with the decomposition $ V_{G\setminus v_{\ell}} = (A\setminus \{v_{\ell}\}) \cup B $. 
By the induction hypothesis, we have that $ \lambda|_{E_{G \setminus v_{\ell}}} $ is an MAT-labeling. 
Using Lemma \ref{MAT-simplicial} again, we conclude that $ \lambda $ is an MAT-labeling of $G$. 
\end{proof}

The lemma below describes an important property (of antichains) of the clique intersection poset $ \mathcal{P}_{G} $.
\begin{lemma}\label{leaf}
Let $ G $ be a strongly chordal graph and let $ T \subseteq \mathcal{P}_{G} $ be an antichain with $ |T| \geq 2 $.
Then there exist distinct $ X_{0}, Y_{0} \in T $ such that $ X_{0} \cap Y_{0} \supseteq X_{0} \cap Y $ for all $ Y \in T \setminus \{X_{0}\} $. 
\end{lemma}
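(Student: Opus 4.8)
The plan is to argue by contradiction and produce an induced crown, contradicting strong chordality through Theorem \ref{Nevries-Rosenke}. First I would reformulate the conclusion: for $X \in T$ set $\mathcal{F}_{X} := \{X \cap Y \mid Y \in T \setminus \{X\}\} \subseteq \mathcal{P}_{G}$, a nonempty collection of nodes each contained in $X$. The desired pair $(X_{0}, Y_{0})$ exists precisely when some $\mathcal{F}_{X_{0}}$ has a greatest element (namely $X_{0} \cap Y_{0}$). So, assuming the lemma fails, every $\mathcal{F}_{X}$ has no greatest element, hence at least two maximal elements. Note also that $T$ being an antichain forces $X \cap Y \subsetneq X$ for all distinct $X, Y \in T$.

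Next I would build a cyclic configuration on which to read off a crown. Starting from any $X_{1} \in T$, I would generate a walk $X_{1}, X_{2}, \dots$ in $T$ as follows: given $X_{i}$ and its predecessor $X_{i-1}$, the node $X_{i} \cap X_{i-1}$ lies in $\mathcal{F}_{X_{i}}$ but cannot contain every maximal element of $\mathcal{F}_{X_{i}}$ (otherwise it would be the greatest element); so I may pick $X_{i+1}$ with $X_{i} \cap X_{i+1}$ a maximal element of $\mathcal{F}_{X_{i}}$ satisfying $X_{i} \cap X_{i+1} \not\subseteq X_{i-1}$. Since each chosen meet is not contained in its predecessor, the walk never backtracks ($X_{i+1} \neq X_{i-1}$), and by finiteness of $T$ it must eventually repeat a vertex, yielding a closed cyclic sequence $Z_{1}, \dots, Z_{k}$ of distinct elements with $k \geq 3$. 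Write $e_{i} := Z_{i} \cap Z_{i+1}$ (indices mod $k$); by construction each $e_{i}$ is a maximal element of $\mathcal{F}_{Z_{i}}$.

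I would then take such a cyclic configuration of minimal length and set it up as a candidate crown with tops $y_{i} := Z_{i}$ and bottoms $x_{i} := e_{i}$. The relations $x_{i} \subsetneq y_{i}, y_{i+1}$ hold automatically, the tops form an antichain, and a routine check for $k \geq 3$ shows that all remaining incomparabilities among the bottoms follow once one knows the single containment condition $e_{i} \not\subseteq Z_{j}$ for every $j \notin \{i, i+1\}$. Because $e_{i}$ is maximal in $\mathcal{F}_{Z_{i}}$, the containment $e_{i} \subseteq Z_{j}$ is equivalent to $Z_{i} \cap Z_{j} = e_{i}$; if this happened for some $j \notin \{i, i+1\}$ it would supply a chord $Z_{i} \to Z_{j}$ carrying the same maximal meet, allowing me to shortcut the cycle to a strictly shorter cyclic configuration of the same kind, contradicting minimality. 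Hence the minimal configuration is chord-free and is an induced $k$-crown in $\mathcal{P}_{G}$, contradicting Theorem \ref{Nevries-Rosenke}.

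The main obstacle I anticipate is exactly this inducedness step: guaranteeing that the extremal cyclic configuration has length at least $3$ and carries no chord, while also controlling the seam where the walk closes up (the condition $e_{1} \not\subseteq Z_{k}$, i.e. $e_{1} \neq e_{k}$, is not handed to us for free by the walk and must be arranged, for instance by rerouting at a vertex that possesses a second maximal meet). Verifying that each shortcut preserves the maximal-meet property of every edge and does not collapse the cycle below length $3$ is the delicate bookkeeping on which the argument turns; everything else is routine once the minimal crown is in hand.
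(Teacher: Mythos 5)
Your overall strategy --- assume every $\mathcal{F}_{X}$ lacks a greatest element, walk through $T$ choosing maximal meets, close the walk into a cycle, and extract an induced crown from a minimal such cycle --- is sound in outline and genuinely different from the paper's argument, which never forms a cycle: the paper takes a \emph{maximal induced fence} (open zigzag) in the poset of pairwise meets and shows some $X_{0}\in T$ is a leaf of its Hasse diagram, so a crown appears only inside the contradiction step and the two free ends of the fence sidestep all ``seam'' issues. The problem is that the two places you yourself flag as ``delicate bookkeeping'' are not bookkeeping; as written the argument fails exactly there, and repairing it is the actual content of the proof.

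Concretely: (i) \emph{The seam.} When the walk first revisits a vertex $Z_{1}$, the outgoing meet $e_{1}=Z_{1}\cap Z_{2}$ was chosen to avoid the walk-predecessor of $Z_{1}$, not the cycle-predecessor $Z_{k}$, so $e_{1}\subseteq Z_{k}$ (equivalently $e_{1}=e_{k}$, by maximality of $e_{1}$ in $\mathcal{F}_{Z_{1}}$) is entirely possible, and then the bottoms of your candidate crown are not pairwise distinct. (ii) \emph{The shortcut.} If the offending containment is $e_{i}\subseteq Z_{i-1}$, your shortcut collapses to a $2$-cycle, which is not a configuration ``of the same kind,'' so minimality yields no contradiction; and even when $j\notin\{i-1,i,i+1\}$, the shortened cycle $(Z_{i},Z_{j},Z_{j+1},\dots,Z_{i-1})$ can have equal incoming and outgoing meets at the new junction $Z_{j}$ (namely $Z_{i}\cap Z_{j}=Z_{j}\cap Z_{j+1}$), so it need not be a crown candidate either. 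Both defects are repairable, but only by adding real arguments: strengthen the invariant to ``consecutive meets are distinct''; for a bad seam with $k=3$ derive $e_{1}=e_{3}\subsetneq Z_{2}\cap Z_{3}\in\mathcal{F}_{Z_{3}}$, contradicting maximality of $e_{3}$ in $\mathcal{F}_{Z_{3}}$, while for $k\geq 4$ delete $Z_{1}$ and check the new closing meet is still $e_{k}$; and in the minimality step iterate the shortcut past bad junctions, observing it cannot reach $Z_{i-1}$ without forcing $e_{i}=e_{i-1}$. None of this is in your write-up, so the proof as proposed has a genuine gap at its decisive step.
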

\begin{proof}
Let $ Q $ be the subposet of $ \mathcal{P}_{G} $ induced by $  \Set{X \cap Y \in \mathcal{P}_{G} | X,Y \in T}  \supseteq T$. 
We will show that there exists a node $ X_{0} \in T $ such that $ X_{0} $ is a leaf of the Hasse diagram $\mcH(Q)$ of $ Q $. 

Consider induced subposets of $ Q $ whose Hasse diagrams have the following form: 
\begin{center}
\begin{tikzpicture}[]
\node (X0) at (0,1) {$ X_{0} $};
\node (Z1) at (1,0) {$ Z_{1} $};
\node (X1) at (2,1) {$ X_{1} $};
\node (Z2) at (3,0) {$ Z_{2} $};
\node (X2) at (4,1) {$ X_{2} $};
\node (D) at (5.5,0.5) {$ \cdots $};
\node (Xm-1) at (7,1) {$ X_{m-1} $};
\node (Zm) at (8,0) {$ Z_{m} $};
\node (Xm) at (9,1) {$ X_{m} $};
\draw (X0)--(Z1)--(X1)--(Z2)--(X2)--(5,0);
\draw (6,0)--(Xm-1)--(Zm)--(Xm);
\end{tikzpicture}
\end{center}
where $ m \geq 0 $ and $ X_{i} \in T $ for all $ i \in \{0, \dots, m\} $.
Let $F= \{X_{0}, Z_{1}, \dots, Z_{m},X_{m}\} \subseteq Q $ be a poset of the form above such that $ m $ is maximum. 
Since the Hasse diagram does not change when we replace $ Z_{i} $ by $ X_{i-1} \cap X_{i} $ for each $ i \in \{1, \dots, m\} $, we may assume $ Z_{i} = X_{i-1} \cap X_{i} $. 

If $ X_{0} $ is not a leaf of $\mcH(Q)$, then there exists a node $ Z^{\prime} \in Q $ such that in $\mcH(Q)$,  $ Z^{\prime} $ is covered by $ X_{0} $ and the pair $ \{Z^{\prime}, Z_{1}\} $ is incomparable. 
Since every element in $ Q\setminus T $ is the intersection of some elements in $ T $, there exists a node $ X^{\prime} \in T $ such that $ X^{\prime} \supsetneq Z^{\prime} $ and $ X^{\prime} \neq X_{0} $. 
Similarly, we may assume $ Z^{\prime} = X^{\prime} \cap X_{0} $. 

Since $ m $ is maximum, either $ X^{\prime} $ contains some $ Z_{j} \in F$, or $ Z^{\prime} $ is contained in some  $ X_{j} \in F \setminus \{X_0\}$. 
In either case, we obtain an induced subposet of $Q$ whose Hasse diagram has the form: 
\begin{center}
\begin{tikzpicture}[]
\node (Zp) at (0,0) {$ Z^{\prime} $};
\node (X0) at (1,1) {$ X_{0} $};
\node (Z1) at (2,0) {$ Z_{1} $};
\node (X1) at (3,1) {$ X_{1} $};
\node (Z2) at (4,0) {$ Z_{2} $};
\node (D) at (5.5,0.2) {$ \cdots $};
\node (Zj) at (7,0) {$ Z_{j} $};
\node (Xj) at (8,1) {$ X_{j}^{\prime} $};
\draw (Zp)--(X0)--(Z1)--(X1)--(Z2)--(5,1);
\draw (6,1)--(Zj)--(Xj)--(Zp); 
\end{tikzpicture}
\end{center}
where $ X_{j}^{\prime} $ denotes $ X_{j} $ or $ X^{\prime} $.

Since $ G $ is strongly chordal, $ \mathcal{P}_{G} $ is $ k $-crown-free for all $ k \geq 3 $ by Theorem \ref{Nevries-Rosenke}. 
Hence $ j = 1 $. 
Thus this leads to one of the following induced subposets: 
\begin{center}
\begin{tikzpicture}[baseline=10]
\node (Zp) at (0,0) {$ Z^{\prime} $};
\node (X0) at (1,1) {$ X_{0} $};
\node (Z1) at (2,0) {$ Z_{1} $};
\node (Xp) at (3,1) {$ X^{\prime} $};
\draw (Zp)--(X0)--(Z1)--(Xp)--(Zp);
\end{tikzpicture}
\hspace{7mm} \hspace{7mm}
\begin{tikzpicture}[baseline=10]
\node (Zp) at (0,0) {$ Z^{\prime} $};
\node (X0) at (1,1) {$ X_{0} $};
\node (Z1) at (2,0) {$ Z_{1} $};
\node (X1) at (3,1) {$ X_{1} $};
\draw (Zp)--(X0)--(Z1)--(X1)--(Zp);
\end{tikzpicture}
\end{center}

If the first case occurs, then $ Z_{1} \subseteq X_{0} \cap X^{\prime} = Z^{\prime} $, which contradicts the incomparability of $ Z_{1} $ and $ Z^{\prime} $. 
When the second case occurs, $ Z^{\prime} \subseteq X_{0} \cap X_{1} = Z_{1} $, a contradiction again. 
In summary, $ X_{0} $ is a leaf of $\mcH(Q)$. 

Now let $ Y_{0} \cap Y_{1} \in Q$  for $Y_{0}, Y_{1} \in T$ denote a unique node in $ Q $ that is covered by $ X_{0} $. 
We may assume that one of $Y_{0}, Y_{1}$ is not $X_0$, say $ Y_{0} \neq X_{0} $.
Then $ X_{0} \supsetneq X_{0} \cap Y_{0} \supseteq X_{0} \cap Y_{0} \cap Y_{1} = Y_{0} \cap Y_{1} $. 
Hence $ X_{0} \cap Y_{0} = Y_{0} \cap Y_{1} $. 

Finally let $ Y \in T\setminus\{X_{0}\} $. 
Then $ X_{0} \cap Y \in Q $ and $ X_{0} \cap Y \subsetneq X_{0} $. 
Since every path from $ X_{0} \cap Y $ to $ X_{0} $ passes through $   Y_{0} \cap Y_{1} $ in $\mcH(Q)$, we must have $ X_{0} \cap Y \subseteq X_{0} \cap Y_{0} $. 
\end{proof}

The lemma below shows the existence of constituent MAT-labelings compatible with the ``gluing trick".
\begin{lemma}\label{node MAT-labeling}
Let $ G $ be a strongly chordal graph. 
Then there exists a family $\F(\mathcal{P}_{G})= \{\lambda_{X}\}_{X \in \mathcal{P}_{G}} $ consisting of MAT-labelings $ \lambda_{X} $ of $ G[X] $ such that $\F(\mathcal{P}_{G})$ is closed under inclusion, i.e., $ \lambda_{X}|_{E_{G[Y]}} = \lambda_{Y} $ whenever $ X \supseteq Y $. 
\end{lemma}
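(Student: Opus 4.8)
The plan is to induct on the number $m = |\mcK(G)|$ of maximal cliques of $G$. When $m = 1$ the graph is complete, $\mathcal{P}_{G} = \{V_G\}$, and the single MAT-labeling guaranteed by Lemma \ref{MAT-L-PEO of complete graph} gives the family. For the inductive step ($m \geq 2$) I would apply Lemma \ref{leaf} to the antichain $T = \mcK(G)$ to obtain distinct maximal cliques $C_0, D_0$ with $S := C_0 \cap D_0 \supseteq C_0 \cap C$ for every $C \in \mcK(G)\setminus\{C_0\}$. Then each vertex of $C_0 \setminus S$ lies in no other maximal clique, so the induced subgraph $G' := G[V_G \setminus (C_0 \setminus S)]$ is strongly chordal (strong chordality is hereditary) and its maximal cliques are exactly $\mcK(G)\setminus\{C_0\}$, the old $C_0$ having shrunk to $S \subseteq D_0$. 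Thus $G'$ has $m-1$ maximal cliques and the induction hypothesis supplies a compatible family $\{\lambda_Y\}_{Y\in\mathcal{P}_{G'}}$ with $\mathcal{P}_{G'}\subseteq\mathcal{P}_{G}$.

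I would then locate the nodes still to be labeled. Every node of $\mathcal{P}_{G}$ not already in $\mathcal{P}_{G'}$ is contained in $C_0$, and any node strictly below $C_0$ lies inside some $C_0\cap C\subseteq S$; hence $S$ is the unique maximal node below $C_0$, and the new nodes form the principal ideal of $C_0$, with $C_0$ on top and $S$ directly beneath it. The idea is to first build an MAT-labeling $\lambda_S$ of the complete graph $G[S]$ that is compatible with the labelings already defined on all nodes below $S$, then extend $\lambda_S$ across the exclusive vertices $C_0\setminus S$ to an MAT-labeling $\lambda_{C_0}$ of $G[C_0]$ by Lemma \ref{MAT-L-PEO of complete graph}(\ref{MAT-L-PEO of complete graph 2}), placing $C_0\setminus S$ at the top labels. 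Each remaining new node $X\subsetneq C_0$ satisfies $X\subseteq S$, so I would set $\lambda_X := \lambda_S|_{E_{G[X]}}$; together with the inherited $\{\lambda_Y\}_{Y\in\mathcal{P}_{G'}}$ this produces a candidate family on all of $\mathcal{P}_{G}$, and closedness under inclusion would then be verified node by node (most cases being immediate since $\lambda_{C_0}$ restricts to $\lambda_S$ and the old labelings already form a compatible family).

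The crux, and the step I expect to be the main obstacle, is the construction of $\lambda_S$ and its agreement with the inherited family. The difficulty is genuine: $S$ is typically \emph{not} a node of $\mathcal{P}_{G'}$ (it can be strictly smaller than every intersection of maximal cliques of $G'$ that contains it), and the restriction of an MAT-labeling of a complete graph to a proper sub-clique need not be an MAT-labeling (already in $K_3$ the single top-labeled edge restricts to a non-MAT-labeling of $K_2$). So one cannot simply read $\lambda_S$ off $\lambda_{D_0}$. The natural fix is to coordinate the two sides of $S$: fix $\lambda_S$ first by building it from the bottom of $\mathcal{P}_{G}^{\le S}$ upward, merging the labelings of the lower covers of each node via Lemma \ref{merge} and filling uncovered vertices via Lemma \ref{MAT-L-PEO of complete graph}(\ref{MAT-L-PEO of complete graph 1}); and, to force $\lambda_D|_{E_{G[S]}}=\lambda_S$ for every maximal clique $D\supseteq S$, realize $S$ as an initial segment of an MAT-PEO of $G[D]$ (again Lemma \ref{MAT-L-PEO of complete graph}(\ref{MAT-L-PEO of complete graph 1})), which likely requires strengthening the inductive hypothesis so the inherited family already restricts well to the separators that will later be created. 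What makes these local choices globally consistent is precisely strong chordality: the $k$-crown-freeness of $\mathcal{P}_{G}$ (Theorem \ref{Nevries-Rosenke}), packaged in the leaf property of Lemma \ref{leaf}, guarantees that the nodes below $C_0$ nest in a tree-like fashion without conflicting overlaps, which is exactly the hypothesis under which the merge and extension lemmas can be applied without clashes.
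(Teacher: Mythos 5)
Your top-level induction --- on the number of maximal cliques, peeling off a ``leaf'' clique $C_0$ supplied by Lemma \ref{leaf} --- has a genuine gap at exactly the point you flag, and the gap is not closed by the sketch you offer. After deleting $C_0\setminus S$, the separator $S=C_0\cap D_0$ is in general \emph{not} a node of $\mathcal{P}_{G'}$, so the inductive hypothesis hands you no MAT-labeling of $G[S]$ at all: Lemma \ref{restriction to intersection of maximal cliques} only guarantees that $\lambda_{D_0}$ restricts to an MAT-labeling on nodes of the \emph{current} clique intersection poset, and $K_3\to K_2$ already shows restriction to an arbitrary sub-clique fails. Without an MAT-labeling of $G[S]$ agreeing with the inherited family, Theorem \ref{glue} cannot be invoked to attach $\lambda_{C_0}$ along $S$. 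The same problem recurs one level down: even granting $\lambda_S$, setting $\lambda_X:=\lambda_S|_{E_{G[X]}}$ for the new nodes $X\subsetneq S$ of $\mathcal{P}_G\setminus\mathcal{P}_{G'}$ neither produces MAT-labelings nor guarantees compatibility with old nodes $Y\in\mathcal{P}_{G'}$ comparable to $X$. Your proposed remedy --- ``strengthen the inductive hypothesis so the inherited family already restricts well to the separators that will later be created'' --- is never stated precisely, and it is unclear what invariant on $\mathcal{P}_{G'}$ could anticipate nodes of $\mathcal{P}_G$ that only exist because of the clique being removed.

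The paper sidesteps all of this by running the induction \emph{inside} $\mathcal{P}_G$ rather than on $|\mcK(G)|$: it orders the nodes by rank (length of a maximum chain down to $\hat{0}$) and builds $\lambda_X$ only after every node below $X$ in the \emph{full} poset $\mathcal{P}_G$ already carries a compatible labeling. Then for each new $X$ one applies Lemma \ref{leaf} to the antichain of nodes covered by $X$, merges their labelings pairwise with Lemma \ref{merge} (the leaf property guaranteeing the overlap of $X_0$ with the union of the rest is the single clique $X_0\cap Y_0$), and extends to all of $G[X]$ by Lemma \ref{MAT-L-PEO of complete graph}(\ref{MAT-L-PEO of complete graph 2}). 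Your ``fix'' of building $\lambda_S$ bottom-up over $\mathcal{P}_G^{\le S}$ is in effect this argument applied locally; carried out in full it replaces, rather than repairs, your outer induction. I would recommend abandoning the induction on the number of maximal cliques and adopting the rank induction directly.
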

\begin{proof}
We define the rank of a node $ X \in \mathcal{P}_{G} $ as the length of a maximum chain connecting $ X $ and $ \hat{0} $.
Let $ \mathcal{P}_{G}^{r} $ denote the set consisting of the nodes of rank at most $ r $. 
We will show by induction on $ r $ that there exists a family $\F( \mathcal{P}_{G}^{r})= \{\lambda_{X}\}_{X \in \mathcal{P}_{G}^{r}} $ consisting of MAT-labelings $ \lambda_{X} $ of $ G[X] $ such that $ \lambda_{X}|_{E_{G[Y]}} = \lambda_{Y} $ whenever $ X \supseteq Y $. 

When $ r = 0 $, $ \mathcal{P}_{G}^{0} = \{\hat{0}\} $. 
Since $ G[\hat{0}] $ is a complete graph (or null graph), there exists an MAT-labeling of $ G[\hat{0}] $ by Lemma \ref{MAT-L-PEO of complete graph}. 

Now suppose $ r > 0 $.
Then by the induction hypothesis there exists a family $\F( \mathcal{P}_{G}^{r-1})= \{\lambda_{Y}\}_{Y \in \mathcal{P}_{G}^{r-1}} $ consisting of MAT-labelings $ \lambda_{Y} $ of $ G[Y] $ such that $ \lambda_{Y_{1}}|_{E_{G[Y_{2}]}} = \lambda_{Y_{2}} $ whenever $ Y_{1} \supseteq Y_{2} $. 
We prove the following claim.
\begin{claim} 
\label{cl:construction} 
 Let $ X \in \mathcal{P}_{G}^{r} \setminus \mathcal{P}_{G}^{r-1} $ and $ T \subseteq \mathcal{P}_{G}^{r-1} $ a set consisting of some nodes covered by $ X $. 
Then there exists an MAT-labeling $ \lambda_{T} $ of $ G[\cup_{Y \in T}Y] $ satisfying $ \lambda_{T}|_{E_{G[Y]}} = \lambda_{Y} $ for any $ Y \in T $.  
\end{claim}
\begin{proof}[Proof of Claim \ref{cl:construction}.]
We prove by induction on $ |T| $.
If $ |T| = 1 $, then it is clear. 
Suppose $ |T| \geq 2 $. 
By Lemma \ref{leaf}, there exist distinct $ X_{0}, Y_{0} \in T $ such that $ X_{0} \cap Y_{0} \supseteq X_{0} \cap Y $ for all $ Y \in T \setminus \{X_{0}\} $. 
By the induction hypothesis on $ |T| $, there exists an MAT-labeling $ \lambda^{\prime} $ of $ G[\cup_{Y \in T\setminus\{X_{0}\}}Y] $ such that $ \lambda^{\prime}|_{E_{G[Y]}} = \lambda_{Y} $ for any $ Y \in T\setminus\{X_{0}\} $. 
Note that
\begin{align*}
X_{0} \cap \left(\bigcup_{Y \in T \setminus\{X_{0}\}}Y\right)
= \bigcup_{Y \in T \setminus\{X_{0}\}}(X_{0} \cap Y) = X_{0} \cap Y_{0}.
\end{align*}
By Lemma \ref{merge}, there exists an MAT-labeling $ \lambda_{T} $ of $ G[\cup_{Y \in T}Y] $ such that $ \lambda_{T}|_{E_{G[X_{0}]}} = \lambda_{X_{0}} $ and $ \lambda_{T}|_{E_{G[\cup_{Y \in T\setminus\{X_{0}\}}Y]}} = \lambda^{\prime} $. 
Therefore $ \lambda_{T}|_{E_{G[Y]}} = \lambda_{Y} $ for any $ Y \in T $. 
\end{proof}
Now we return to the proof of Lemma \ref{node MAT-labeling}. 
Let $ T $ be the set consisting of all nodes covered by $ X $. 
Then use Lemma \ref{MAT-L-PEO of complete graph}(\ref{MAT-L-PEO of complete graph 2}) to extend $ \lambda_{T} $ to $ \lambda_{X} $ of $ G[X] $. 
\end{proof}

We are ready to prove the main result of this subsection.
\begin{theorem}[Strong chordality implies MAT-freeness]
\label{thm:main-2} 
If $ G $ is a strongly chordal graph, then $G$ admits an MAT-labeling. 
\end{theorem}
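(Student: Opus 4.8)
The plan is to assemble the local MAT-labelings produced by Lemma \ref{node MAT-labeling} into a single global one by repeated application of the gluing trick (Theorem \ref{glue}), merging the maximal cliques of $G$ one at a time in an order dictated by a clique tree. Since all of the genuinely delicate work has been packaged into Lemma \ref{node MAT-labeling}, this final step is essentially a coherent bookkeeping of overlaps.

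First I would invoke Lemma \ref{node MAT-labeling} to obtain a family $\F(\mathcal{P}_{G}) = \{\lambda_{X}\}_{X \in \mathcal{P}_{G}}$ of MAT-labelings of the induced subgraphs $G[X]$ that is closed under inclusion. The maximal nodes of $\mathcal{P}_{G}$ are exactly the maximal cliques of $G$, so in particular this provides a coherent system of MAT-labelings $\{\lambda_{C}\}_{C \in \mcK(G)}$ that agree on every intersection $C \cap C'$ (itself a node of $\mathcal{P}_{G}$ by Remark \ref{Ho and Lee}). Next, fix a clique tree of $G$ and order the maximal cliques $C_{1}, \dots, C_{m}$ so that the running intersection property holds: each $C_{i}$ $(i \geq 2)$ is adjacent in the tree to a unique earlier clique $C_{p(i)}$ with $p(i) < i$, and $C_{i} \cap (C_{1} \cup \cdots \cup C_{i-1}) = C_{i} \cap C_{p(i)} =: S_{i}$. (If $G$ is disconnected the clique tree is a forest and some $S_{i}$ may be empty; this causes no trouble, as $\varnothing$ is vacuously a clique with empty labeling.)

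Set $H_{i} \coloneqq (C_{1} \cup \cdots \cup C_{i},\, E_{G[C_{1}]} \cup \cdots \cup E_{G[C_{i}]})$, so that $H_{m} = G$ because every edge of $G$ lies in some maximal clique. I would prove by induction on $i$ that $H_{i}$ carries an MAT-labeling $\mu_{i}$ with $\mu_{i}|_{E_{G[C_{j}]}} = \lambda_{C_{j}}$ for all $j \leq i$; the base case $\mu_{1} = \lambda_{C_{1}}$ is immediate. For the inductive step, put $A \coloneqq C_{1} \cup \cdots \cup C_{i-1}$ and $B \coloneqq C_{i}$. The running intersection property gives $A \cap B = S_{i}$, and a short check using $C_{j} \cap C_{i} \subseteq S_{i}$ for $j < i$ yields the edge-decomposition $E_{H_{i}} = E_{H_{i}[A]} \cup E_{H_{i}[B]}$ with $E_{H_{i}[A]} = E_{H_{i-1}}$ and $E_{H_{i}[B]} = E_{G[C_{i}]}$. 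Moreover $S_{i} = C_{i} \cap C_{p(i)}$ is a minimal vertex separator, hence a clique by Theorem \ref{Dirac minimal vertex separator} and a node of $\mathcal{P}_{G}$ by Remark \ref{Ho and Lee}; and since $S_{i} \subseteq C_{p(i)} \subseteq A$ and $S_{i} \subseteq C_{i}$, closure under inclusion gives $\mu_{i-1}|_{E_{G[S_{i}]}} = \lambda_{C_{p(i)}}|_{E_{G[S_{i}]}} = \lambda_{S_{i}} = \lambda_{C_{i}}|_{E_{G[S_{i}]}}$. Thus the hypotheses of Theorem \ref{glue} hold with $\lambda_{A} = \mu_{i-1}$, $\lambda_{B} = \lambda_{C_{i}}$, and $\lambda_{A \cap B} = \lambda_{S_{i}}$, and I would set $\mu_{i} \coloneqq \mu_{i-1} \cup \lambda_{C_{i}}$, the required MAT-labeling of $H_{i}$. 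Taking $i = m$ produces an MAT-labeling $\mu_{m}$ of $H_{m} = G$.

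The hard part has already been absorbed into Lemma \ref{node MAT-labeling}, where strong chordality enters through the crown-freeness of $\mathcal{P}_{G}$ (Theorem \ref{Nevries-Rosenke}). The main obstacle in this last step is purely organizational: one must choose the gluing order so that at each stage the overlap $A \cap B$ is a single clique, which is exactly what the running intersection property of a clique tree guarantees, and one must verify the edge-decomposition $E_{H_{i}} = E_{H_{i}[A]} \cup E_{H_{i}[B]}$ so that Theorem \ref{glue} applies verbatim. I expect no difficulty beyond these checks, and combined with Theorem \ref{thm:main-1} and Proposition \ref{prop:MATF=MATL} this will complete the proof of Theorem \ref{thm:MAT-free-strong-chordal}.
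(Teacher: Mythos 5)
Your proof is correct, and it reaches the goal by a genuinely different route in the final assembly step. The paper also starts from Lemma \ref{node MAT-labeling} and the gluing trick (Theorem \ref{glue}), but it orders the merging of the maximal cliques by reusing Lemma \ref{leaf}: by induction on $|T|$ for an antichain $T\subseteq\mcK(G)$, it peels off a clique $X_0$ whose intersection with the union of the rest collapses to a single node $X_0\cap Y_0$, exactly as in the proof of Claim \ref{cl:construction}. You instead order the maximal cliques along a rooted clique tree and invoke the running intersection property, so that $C_i\cap(C_1\cup\cdots\cup C_{i-1})=C_i\cap C_{p(i)}$ is a single clique intersection (hence a node of $\mathcal{P}_G$ and a clique, with the coherence $\mu_{i-1}|_{E_{G[S_i]}}=\lambda_{S_i}=\lambda_{C_i}|_{E_{G[S_i]}}$ following from closure under inclusion). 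Your edge-decomposition check $E_{H_i[A]}=E_{H_{i-1}}$ and $E_{H_i[B]}=E_{G[C_i]}$ is the right thing to verify and goes through because $E_{G[S_i]}\subseteq E_{G[C_{p(i)}]}$. What your version buys is that the last step uses only ordinary chordality (a standard clique-tree fact) rather than the crown-freeness-based Lemma \ref{leaf}, making it cleanly visible that strong chordality enters solely through Lemma \ref{node MAT-labeling}; the cost is that you import the running intersection property, which the paper never states, whereas the paper's version keeps everything inside machinery it has already proved. Your aside about minimal vertex separators is unnecessary (the intersection of two maximal cliques is a node of $\mathcal{P}_G$ and a clique by definition, even when empty), but harmless.
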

\begin{proof}
By Lemma \ref{node MAT-labeling}, there exists a family $ \{\lambda_{X}\}_{X \in \mathcal{P}_{G}} $ consisting of MAT-labelings $ \lambda_{X} $ of $ G[X] $ such that $ \lambda_{X}|_{E_{G[Y]}} = \lambda_{Y} $ whenever $ X \supseteq Y $. 
Considering the antichain $\mcK(G)$ of $ \mathcal{P}_{G} $ consisting of the maximal cliques of $ G $, we can construct an MAT-labeling $ \lambda $ of $ G $ by using Lemma \ref{leaf} and the ``gluing trick" (Theorem \ref{glue}). 
More precisely, we show that for any $T \subseteq \mcK(G)$, there exists an MAT-labeling $ \lambda_{T}:= \cup_{Y \in T} \lambda_Y$ of $ G[\cup_{Y \in T}Y] $ satisfying $ \lambda_{T}|_{E_{G[Y]}} = \lambda_{Y} $ for every $ Y \in T $. 
This can be done by induction on $ |T| $ very similar to the proof of Claim \ref{cl:construction}. 
Then take $T = \mcK(G)$.
\end{proof}

%******************************************************************************** 
Finally we present the proofs of the main result of the paper and its corollary. 

\begin{proof}[Proof of Theorem \ref{thm:MAT-free-strong-chordal}.]
It follows from Theorems \ref{thm:main-1}, \ref{thm:main-2} and Proposition \ref{prop:MATF=MATL}.
\end{proof}

\begin{proof}[Proof of Corollary \ref{cor:close-localization}.]
Taking localization on a flat of a graphic arrangement is equivalent to taking an induced subgraph of the underlying graph. 
The proof follows from Theorem \ref{thm:MAT-free-strong-chordal} and a simple fact that the class of strongly chordal graphs is closed under taking induced subgraphs. 
\end{proof}

We close this section by giving an example to illustrate the construction in Theorem \ref{thm:main-2}. 
\begin{example}
\label{ex:7}  
Let $G$ be a unit interval graph in Figure \ref{fig:7-ver}. Its clique intersection poset $\mcP_G$ is given in  Figure \ref{fig:PG-7-ver}. 
First we need to find a family $\F(\mathcal{P}_{G})= \{\lambda_{X}\}_{X \in \mathcal{P}_{G}} $ consisting of MAT-labelings one for each $ G[X] $ such that $\F(\mathcal{P}_{G})$ is closed under inclusion mentioned in Lemma \ref{node MAT-labeling}. 
This can be done inductively from the bottom to top starting from the minimum element $\hat{0}$. 
For example, to find a desired MAT-labeling $\lambda_3 \in \F(\mathcal{P}_{G})$ of $G[X]$ where $X=\{v_2,v_3,v_4, v_5\}$ provided that the compatible MAT-labelings of $G[Y]$ for all $Y$'s covered by $X$ (in this case $\{v_4, v_5\}$ and $\{v_2,v_3,v_4\}$) were given, we use Lemma \ref{merge} (and Lemma \ref{MAT-L-PEO of complete graph}(\ref{MAT-L-PEO of complete graph 2}) if $\cup Y \subsetneq X$). 
Combining the resulting MAT-labelings $\lambda_i\in \F(\mathcal{P}_{G})$ ($1 \le i \le 4$) of the maximal cliques by the   ``gluing trick" (Theorem \ref{glue}) yields an MAT-labeling of $G$. 
Figure \ref{fig:exp-grow-SC} shows a gluing $((\lambda_1\cup \lambda_2) \cup \lambda_3)\cup \lambda_4$ and how the exponents change in each inductive step, which we call an ``exponent growth process". 
Note that although MAT-labeling of $G$ is uniquely determined by $\lambda_i$'s, gluing order is not necessarily unique. 
For example, the gluing $\lambda_1\cup (\lambda_2 \cup (\lambda_3\cup \lambda_4))$ derived from the same method gives the same output but different exponent growth process: $\{0,1,2,3\} \to \{0,1,2,3,3\} \to \{0,1,2,2,3,3\} \to \{0,1,2,2,2,3,3\}$.
\end{example}

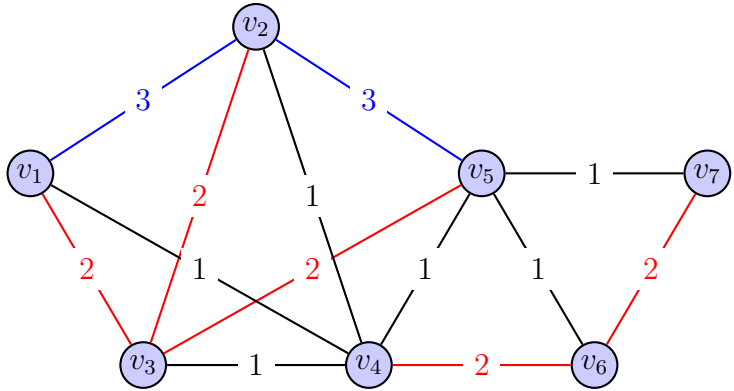
\begin{figure}[htbp]
\centering
\begin{tikzpicture}[scale=1.5]]
\begin{scope}[every node/.style={circle,thick,draw, inner sep=1.7pt, fill=blue!20}]
    \node (v3) at (0,0) {$v_3$};
    \node (v4) at (2,0) {$v_4$};
    \node (v6) at (4,0) {$v_6$};
    \node (v1) at (-1,1.7) {$v_1$};
    \node (v5) at (3,1.7) {$v_5$};
    \node (v7) at (5,1.7) {$v_7$};
        \node (v2) at (1,3) {$v_2$};
\end{scope}
    \Edge[color=red,label=$\color{red}{2}$,style={pos=0.5}](v3)(v1)
    \Edge[color=red,label=$\color{red}{2}$,style={pos=0.5}](v3)(v5)
    \Edge[color=red,label=$\color{red}{2}$,style={pos=0.5}](v3)(v2)
    \Edge[color=red,label=$\color{red}{2}$,style={pos=0.5}](v4)(v6)
    \Edge[color=red,label=$\color{red}{2}$,style={pos=0.5}](v7)(v6)
        \Edge[color=blue,label=$\color{blue}{3}$,style={pos=0.5}](v2)(v1)
    \Edge[color=blue,label=$\color{blue}{3}$,style={pos=0.5}](v2)(v5)
        \Edge[label=$1$,style={pos=0.5}](v4)(v5)
                \Edge[label=$1$,style={pos=0.5}](v6)(v5)
                        \Edge[label=$1$,style={pos=0.5}](v7)(v5)
                                \Edge[label=$1$,style={pos=0.5}](v4)(v1)
                                        \Edge[label=$1$,style={pos=0.5}](v4)(v2)
                                                \Edge[label=$1$,style={pos=0.5}](v4)(v3)
\end{tikzpicture}
\caption{A unit interval (hence strongly chordal) graph $G$ on $7$ vertices with an MAT-labeling constructed by using Theorem \ref{thm:main-2}. The corresponding graphic arrangement $\A_G$ is free with exponents $\{0,1,2,2,2,3,3\}$.}
\label{fig:7-ver}
\end{figure}

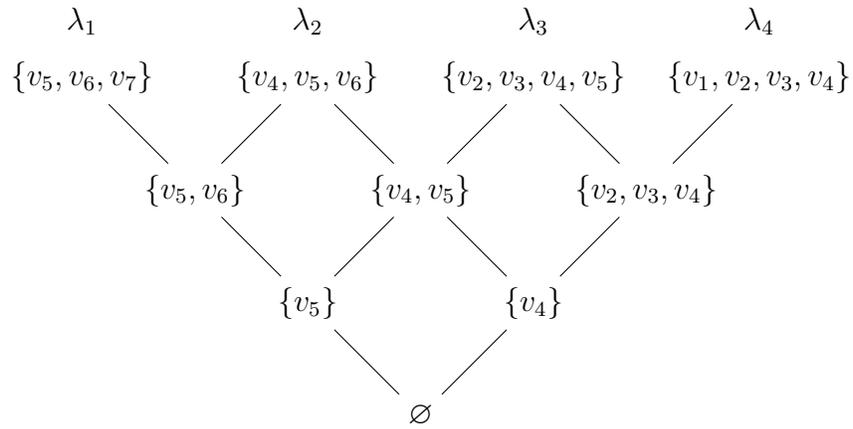
\begin{figure}[htbp]
\centering
\begin{tikzpicture}[scale=1.5]
  \node (1234) at (3,3) {$\{v_1, v_2,v_3,v_4\}$};
          \node (lambda4) at (3,3.5) {$\lambda_4$};
  \node (2345) at (1,3) {$\{v_2,v_3,v_4, v_5\}$};
        \node (lambda3) at (1,3.5) {$\lambda_3$};
  \node (456) at (-1,3) {$\{v_4, v_5,v_6\}$};
      \node (lambda2) at (-1,3.5) {$\lambda_2$};
  \node (567) at (-3,3) {$\{v_5,v_6,v_7\}$};
    \node (lambda1) at (-3,3.5) {$\lambda_1$};
  \node (234) at (2,2) {$\{v_2,v_3,v_4 \}$};
  \node (45) at (0,2) {$\{v_4, v_5\}$};
  \node (56) at (-2,2) {$\{v_5,v_6\}$};
  \node (4) at (1,1) {$\{v_4\}$};
  \node (5) at (-1,1) {$\{v_5\}$};
  \node (0) at (0,0) {$\varnothing$};
  \draw (567) -- (56) -- (5) -- (0) -- (4) -- (234)-- (1234);
    \draw (56) --(456) -- (45) --(4);
        \draw (5) --(45) -- (2345) --(234);
\end{tikzpicture}
\caption{The clique intersection poset of the graph in Figure \ref{fig:7-ver} with MAT-labelings $\lambda_i$ ($1 \le i \le 4$) of the maximal cliques constructed by using Lemma \ref{node MAT-labeling}.}
\label{fig:PG-7-ver}
\end{figure}

\ytableausetup{centertableaux}

\begin{figure}[htbp]
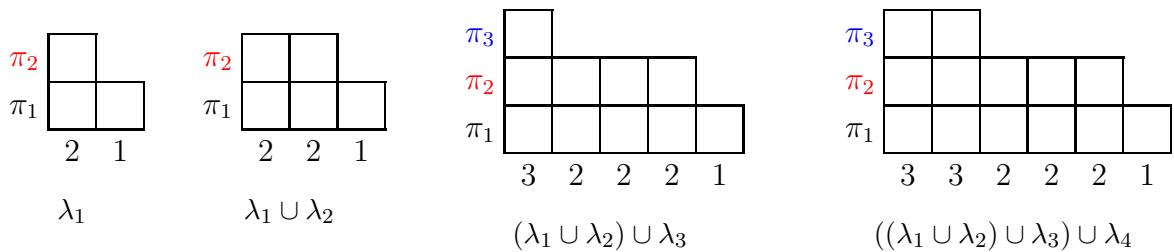

\centering
\begin{subfigure}{.15\textwidth}
  \centering
\begin{ytableau}
  \none[\color{red}{\pi_2}] &  & \none \\
   \none[\pi_1] &  &  \\
  \none & \none[2] & \none[1]
\end{ytableau}
  \caption*{$\lambda_1$}
  \label{fig:1}
\end{subfigure}%
\begin{subfigure}{.2\textwidth}
  \centering
\begin{ytableau}
  \none[\color{red}{\pi_2}] & & & \none \\
   \none[\pi_1] & & &  \\
  \none & \none[2] & \none[2] & \none[1]
\end{ytableau}
  \caption*{$\lambda_1\cup \lambda_2$}  
  \label{fig:2}
\end{subfigure}%
\begin{subfigure}{.30\textwidth}
  \centering
\begin{ytableau}
  \none[\color{blue}{\pi_3}] &  & \none & \none \\
  \none[\color{red}{\pi_2}] & & &  & \\
   \none[\pi_1] & & &  & &\\
  \none & \none[3] & \none[2] & \none[2] & \none[2] & \none[1]
\end{ytableau}
  \caption*{$(\lambda_1\cup \lambda_2) \cup \lambda_3$}  
  \label{fig:3}
\end{subfigure}%
\begin{subfigure}{.35\textwidth}
  \centering
\begin{ytableau}
  \none[\color{blue}{\pi_3}] &  &  & \none \\
  \none[\color{red}{\pi_2}] & & & &  & \\
   \none[\pi_1] & & &  & & &\\
  \none & \none[3] & \none[3] & \none[2] &  \none[2] & \none[2] & \none[1]
\end{ytableau}
  \caption*{$((\lambda_1\cup \lambda_2) \cup \lambda_3)\cup \lambda_4$}  
  \label{fig:4}
\end{subfigure}
\caption{An exponent growth process for  the graph in Figure \ref{fig:7-ver} following the ``gluing trick"  in Theorem \ref{thm:main-2}.}
\label{fig:exp-grow-SC}
\end{figure}

%******************************************************************************** 
\section{Further remarks and open problems}
\label{sec:rem}
In this section we address some remarks and suggest problems for future research.
\begin{enumerate}[(A)]
\item
As noted in Introduction, our Theorem \ref{thm:MAT-free-strong-chordal} gives an alternative proof that the ideal graphic arrangements are MAT-free (type $A$ of Theorem \ref{thm:ideal-free}). 
We give here two examples to illustrate the difference between two methods. 
The original proof of the ideal MAT-free theorem is inductive on the height of ideals  \cite[\S5]{ABCHT16}, and in each inductive MAT-step only some of maximal exponents get increased by $1$. 
This yields a rigorous exponent growth process hence differs from  our construction in Theorem \ref{thm:main-2}. 
For example, the unit interval graph $G$ in Figure \ref{fig:7-ver} with the given vertex-labeling has its corresponding graphic arrangement $\A_G$ an ideal subarrangement of the Weyl arrangement $\A_{\Phi^+(A_{6})}$. 
The exponent growth process following the ideal MAT-free theorem is given in Figure \ref{fig:exp-grow-IF} which differs from that in Figure \ref{fig:exp-grow-SC}. 
Our construction applies also to strongly chordal graphs that are not unit interval graphs. 
Another way to see the difference between two methods is to consider MAT-labelings of complete graphs, see Remark \ref{rem:complete}, Lemma \ref{MAT-L-PEO of complete graph}(\ref{MAT-L-PEO of complete graph 2}) and Figure \ref{fig:MAT-K4}. 

\begin{figure}[htbp]
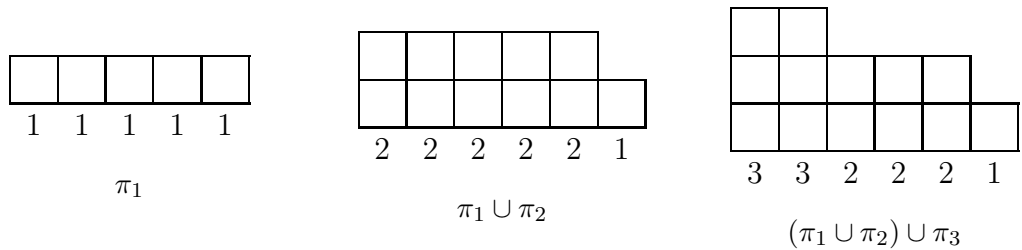

\centering
\begin{subfigure}{.30\textwidth}
  \centering
\begin{ytableau}
~ & &  & &\\
 \none[1] & \none[1] & \none[1] & \none[1] & \none[1]
\end{ytableau}
  \caption*{$\pi_1$}  
%  \label{fig:i-1}
\end{subfigure}%
\begin{subfigure}{.30\textwidth}
  \centering
\begin{ytableau}
~& & &   & \\
~& & &  &  &\\
 \none[2] & \none[2] & \none[2] &  \none[2] & \none[2] & \none[1]
\end{ytableau}
  \caption*{$\pi_1\cup \pi_2$}  
%  \label{fig:i-2}
\end{subfigure}%
\begin{subfigure}{.30\textwidth}
  \centering
\begin{ytableau}
 ~& &   \none \\
~& & &   & \\
~& & &  &  &\\
 \none[3] & \none[3] & \none[2] &  \none[2] & \none[2] & \none[1]
\end{ytableau}
  \caption*{$(\pi_1\cup \pi_2) \cup \pi_3$}  
%  \label{fig:i-3}
\end{subfigure}
\caption{Exponent growth process for  the graph in Figure \ref{fig:7-ver} following the ideal MAT-free theorem.}
\label{fig:exp-grow-IF}
\end{figure}

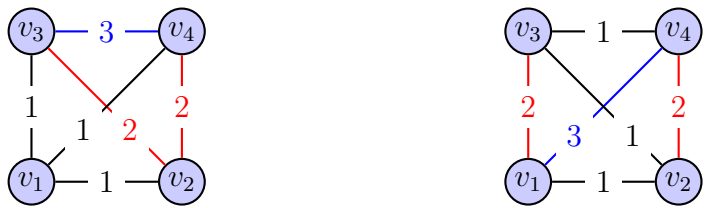
\begin{figure}[htbp]
\centering
\begin{subfigure}{.4\textwidth}
  \centering
\begin{tikzpicture}[scale=1]]
\begin{scope}[every node/.style={circle,thick,draw, inner sep=1.7pt, fill=blue!20}]
    \node (v1) at (0,0) {$v_1$};
    \node (v2) at (2,0) {$v_2$};
    \node (v4) at (2,2) {$v_4$};
    \node (v3) at (0,2) {$v_3$};
\end{scope}
    \Edge[color=red,label=$\color{red}{2}$,style={pos=0.7}](v3)(v2)
    \Edge[color=red,label=$\color{red}{2}$,style={pos=0.5}](v4)(v2)
        \Edge[color=blue,label=$\color{blue}{3}$,style={pos=0.5}](v3)(v4)
\Edge[label=$1$,style={pos=0.7}](v4)(v1)
\Edge[label=$1$,style={pos=0.5}](v1)(v2)
\Edge[label=$1$,style={pos=0.5}](v1)(v3)
\end{tikzpicture}
%  \caption*{}  
\end{subfigure}%
\begin{subfigure}{.4\textwidth}
  \centering
 \begin{tikzpicture}[scale=1]]
\begin{scope}[every node/.style={circle,thick,draw, inner sep=1.7pt, fill=blue!20}]
    \node (v1) at (0,0) {$v_1$};
    \node (v2) at (2,0) {$v_2$};
    \node (v4) at (2,2) {$v_4$};
    \node (v3) at (0,2) {$v_3$};
\end{scope}
    \Edge[color=red,label=$\color{red}{2}$,style={pos=0.5}](v3)(v1)
    \Edge[color=red,label=$\color{red}{2}$,style={pos=0.5}](v4)(v2)
        \Edge[color=blue,label=$\color{blue}{3}$,style={pos=0.25}](v1)(v4)
\Edge[label=$1$,style={pos=0.5}](v4)(v3)
\Edge[label=$1$,style={pos=0.5}](v1)(v2)
\Edge[label=$1$,style={pos=0.25}](v2)(v3)
\end{tikzpicture}
\end{subfigure}
\caption{Two nonisomorphic MAT-labelings of $K_4$ constructed by using Lemma \ref{MAT-L-PEO of complete graph}(\ref{MAT-L-PEO of complete graph 2}) (left) and the ideal MAT-free theorem (right).}
\label{fig:MAT-K4}
\end{figure}

\item Cuntz-M{\"u}cksch  \cite[Example 22]{CM20} showed that MAT-freeness is in general not closed under taking restriction. Their example is a non-MAT-free restriction  to a hyperplane of the Weyl arrangement of type $E_6$. 
We give here a different example (with a smaller number of hyperplanes) thanks to the fact that the class of strongly chordal graph is not closed under taking edge-contraction. 
Consider the rising sun (which is a strongly chordal graph) with its edge $e$ displayed in Figure \ref{fig:rsun}. 
 Taking the contraction of $e$ results in the $3$-sun which is not strongly chordal. 
 
 \begin{figure}[htbp]
\centering
\begin{tikzpicture}
\draw (0,0) node[v](x1){};
\draw (1,0) node[v](x2){};
\draw (1,1) node[v](x3){};
\draw (0,1) node[v](x4){};
\draw (1.865,0.5) node[v](y2){};
\draw (0.5,1.865) node[v](y3){};
\draw (-0.865,0.5) node[v](y4){};
\draw (x2)--(x3)--(x4)--(x1)--cycle;
\draw (x1)--(x3);
\draw (x2)--(x4);
\draw (x2)--(y2)--(x3)--(y3)--(x4)--(y4)--(x1);
\draw (x1)--(x2) node [midway, below] {$e$};
%\draw (0.5,-0.5) node(){rising sun};
\end{tikzpicture}\caption{The rising sun.}
\label{fig:rsun}
\end{figure}
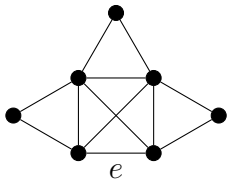

\item Strongly chordal graphs are the intersection graphs of unit balls in $ \mathbb{R} $-trees \cite{kuroda2021unit-gac}. 
Therefore they can be considered as generalization of unit interval graphs in the perspective of intersection graphs.

\item 
Strongly chordal graphs are also known as the graphs having a \textbf{strong perfect elimination ordering (SPEO)} \cite{Far83}, i.e., a PEO $ (v_{1}, \dots, v_{\ell}) $ with the property that for all $i<j, k<q$ if $\{v_i, v_k\}, \{v_i, v_q\} , \{v_j, v_k\}$ are edges, then $ \{v_j, v_q\}$ is an edge.
It would be interesting to find a (more direct) connection between SPEO and MAT-PEO.

\item If an arrangement $\A$ is MAT-free, then $\A$ is \emph{accurate}  \cite[Theorem 1.2]{MR21} i.e., $\A$ is free  with $\exp(\A) =\{d_1, \ldots, d_\ell\}_{\le}$ and there exists for each $0 \le p <\ell$ a $p$-codimensional flat $X\in L(\A)$ such that $\A^X$ is free with  $\exp(\A^X)=\{d_1,\ldots, d_{\ell-p}\}_{\le}$.
Characterize the accuracy of graphic arrangements. 
We are able to show that if $G$ is an $n$-sun, then $\A_G$ is accurate (but not MAT-free).
\item From  Theorems \ref{thm:MAT-free-strong-chordal} and \ref{Nevries-Rosenke}, we now know that the MAT-freeness of graphic arrangements can be characterized by a poset structure, the clique intersection poset of chordal graphs. 
Define a ``clique intersection poset" of an arbitrary (supersolvable) arrangement and characterize the MAT-freeness of the arrangement by the poset.
It is related to another question of Cuntz-M{\"u}cksch  \cite[Problem 47]{CM20} which asked if the MAT-freeness can be characterized by a partial order on the hyperplanes, generalizing the classical partial order (\S\ref{subsec:MATfree-arr}) on the positive roots of an irreducible root system. 
\end{enumerate}

%******************************************************************************** 
\vskip1em
\noindent
\textbf{Acknowledgements.} 
The first author was supported by JSPS Research Fellowship for Young Scientists Grant Number 19J12024, and is currently supported by a postdoctoral fellowship of the Alexander von Humboldt Foundation. 
We thank Professor Takuro Abe for posing a question that whether or not strongly chordal graphs are closed under taking contraction which guides us to the rising sun example in \S\ref{sec:rem}(B).
We thank Professor Gerhard R\"{o}hrle for suggesting the problem of characterizing accurate graphic arrangements in \S\ref{sec:rem}(E).

\bibliographystyle{amsplain}
\bibliography{references}

\end{document}